\newcommand{\comment}[1]{}  
\newcommand{\mathdot}{{\mathbf{\scriptscriptstyle\bullet}}}
\def\Ncyz{\Ncy_{\circ}}
\def\Ab{{\bf Ab}}
\def\ProAb{\operatorname{pro-\Ab}}
\def\twomaps{\rightrightarrows}
\def\oDelta{\overline{\delta}}
\def\BHM{B\"okstedt-Hsiang-Madsen }
\def\Ord{\mathbf{Ord}}
\def\S{{{\mathbb S}^1}}
\def\ip#1{\left< #1 \right>}
\def\ad#1{{\scriptstyle \left[ \frac{1}{#1} \right]}}
\def\hocolim{\operatorname{hocolim}}
\def\holim{\operatorname{holim}}
\def\hofi{\operatorname{hofiber}}
\def\Ncy{N^{cy}}
\def\tNcy{\widetilde{N}^{cy}}
\def\smsh{\wedge}
\def\Smsh{\bigwedge}
\def\cdh{{\text{cdh}}}
\def\scdh{{\text{scdh}}}
\def\zar{{\text{zar}}}
\def\red{{\text{red}}}
\def\nil{\operatorname{nil}}
\def\chr{\operatorname{char}}
\def\H{{\mathbb H}}
\def\tH{\widetilde{H}}
\def\ker{\operatorname{ker}}
\def\cA{\mathcal A}
\def\cF{\mathcal F}
\def\cG{\mathcal G}
\def\cI{\mathcal I}
\def\cO{\mathcal O}
\def\cMpc{{\mathcal M}_{\text{pc}}}
\def\cMpctf{{\mathcal M}_{\text{pctf}}}
\def\nor{\mathrm{nor}}
\def\sn{\mathrm{sn}}
\def\cK{\mathcal K}
\def\cKH{\mathcal KH}
\def\Hom{\operatorname{Hom}}
\def\End{\operatorname{End}}
\def\Maps{\operatorname{Maps}}
\def\Spec{\operatorname{Spec}}
\def\MSpec{\operatorname{MSpec}}
\def\bu{\mathdot}
\def\map#1{\, {\buildrel #1 \over \lra}\, }
\def\smap#1{\, {\buildrel #1 \over \to}\, }
\def\lra{\longrightarrow}
\def\into{\rightarrowtail}
\def\onto{\twoheadrightarrow}
\def\tOmega{\widetilde{\Omega}}
\def\tDelta{\widetilde{\Delta}}
\DeclareMathOperator*{\colim}{colim}
\newcommand{\Q}{\mathbb{Q}}
\newcommand{\A}{\mathbb{A}}
\newcommand{\bH}{{\mathbb{H}}}
\newcommand{\R}{{\mathbb{R}}}
\newcommand{\F}{\mathbb{F}}
\newcommand{\Z}{\mathbb{Z}}
\newcommand{\N}{\mathbb{N}}
\newcommand{\fc}{{\mathfrak c}}
\newcommand{\fp}{{\mathfrak p}}
\numberwithin{equation}{section}
\theoremstyle{plain} 
\newtheorem{thm}[equation]{Theorem}
\newtheorem{cor}[equation]{Corollary}
\newtheorem{lem}[equation]{Lemma}
\newtheorem{prop}[equation]{Proposition}
\newtheorem{substuff}{Remark}[equation]
\newtheorem{EWT}[equation]{Equivariant Whitehead Theorem}
\theoremstyle{definition}
\newtheorem{defn}[equation]{Definition}
\newtheorem{ex}[equation]{Example}
\theoremstyle{remark}
\newtheorem{rem}[equation]{Remark}
\newtheorem{subrem}[substuff]{Remark} 
\newtheorem{subex}[substuff]{\bf Example} 
\newtheorem{construction}[equation]{Construction}
\begin{document}



\title[$K$-theory of toric varieties in positive characteristic]
{The $K$-theory of toric varieties \\ in positive characteristic}
\date{\today}

\author{G. Corti\~nas}
\thanks{Corti\~nas' research was supported by Conicet
and partially supported by grants UBACyT W386, PIP
112-200801-00900, and MTM2007-64704 (Feder funds).}
\address{Dept.\ Matem\'atica-Inst.\ Santal\'o, FCEyN,
Universidad de Buenos Aires,
Ciudad Universitaria, (1428) Buenos Aires, Argentina}
\email{gcorti@dm.uba.ar}

\author{C. Haesemeyer}
\thanks{Haesemeyer's research was partially supported by NSF grant DMS-0966821}
\address{Dept.\ of Mathematics, University of California, Los Angeles CA
90095, USA}
\email{chh@math.ucla.edu}

\author{Mark E. Walker}
\thanks{Walker's research was partially supported by NSF grant DMS-0966600.}
\address{Dept.\ of Mathematics, University of Nebraska - Lincoln,
  Lincoln, NE 68588, USA}
\email{mwalker5@math.unl.edu}

\author{C. Weibel}
\thanks{Weibel's research was supported by NSA and NSF grants.}
\address{Dept.\ of Mathematics, Rutgers University, New Brunswick,
NJ 08901, USA} \email{weibel@math.rutgers.edu}

\begin{abstract}
We show that if $X$ is a toric scheme over a regular ring containing a field of finite
characteristic then the direct limit of the $K$-groups of $X$ taken
over any infinite sequence of nontrivial dilations is homotopy
invariant. This theorem was known in characteristic~0. The affine
case of our result was conjectured by Gubeladze.
\end{abstract}

\subjclass[2000]{14F20,19E08, 19D55}

\keywords{Toric variety, algebraic $K$-theory, monoid scheme,
topological Hochschild homology}

\maketitle

\tableofcontents

\section*{Introduction}\label{introduction}

Let $X$ be a toric variety over a field $k$. For each integer $c\ge2$,
multiplication by $c$ on the lattice associated to $X$ induces an
endomorphism $\theta_c$ on $X$, called a {\it dilation}. For example,
if $k = \Z/p$, the endomorphism $\theta_p$ coincides with the
Frobenius map. Locally, $\theta_c$ is determined by its action on
affine space: $\theta_c(a_1,a_2,\dots)=(a_1^c,a_2^c,\dots)$.
Each sequence $(c_1,c_2,\dots)$ of integers $\ge2$ yields a sequence
of dilations on $X$ and hence a sequence of endomorphisms $\theta_{c_i}$
of its $K$-theory $K_*(X)$ and its homotopy $K$-theory $KH_*(X)$.
The {\it Dilation theorem} for $K$-theory states that
the resulting `dilated' $K$-theory and $KH$-theory agree on toric varieties:
\begin{equation}\label{intro-Dilation}
\varinjlim\nolimits_{\,\theta_{c}} K_*(X) \map{\cong}
\varinjlim\nolimits_{\,\theta_{c}} KH_*(X).
\end{equation}
When $k$ has characteristic~0, this was proven by Gubeladze
for affine toric varieties in \cite{Gu05}, and
in full generality by the authors in \cite{chww-toric}.

If $R$ is a commutative regular $k$-algebra, we may consider
\eqref{intro-Dilation} with $X$ replaced by $X_R=X\times_k\Spec(R)$.
The main result of this paper is that the Dilation theorem
\eqref{intro-Dilation} holds for $X_R$,
even at the level of spectra. This is proven in characteristic~0
in Theorem \ref{dilation-0} and in finite characteristic in
Theorem \ref{MainTheorem} below.

Putting the affine cases of these results together,
and taking the direct limit over finitely generated submonoids,
we obtain Theorem \ref{MainTheorem-intro} (\ref{Gconjecture} below);
it settles a conjecture of Gubeladze (\cite[1.1]{Gu05})
about monoid algebras $k[A]$ when $k$ is any regular ring.
The affine case in characteristic~0 was proven by Gubeladze in \cite{Gu08}.

\begin{thm}\label{MainTheorem-intro}
Let $A$ be a cancellative, torsionfree commutative
monoid with no non-trivial units. Then for every sequence
$\fc = (c_1,c_2,\dots)$ of integers $\ge2$, and every
regular ring $k$ containing a field,
there is an isomorphism
\[
K_*(k) \map{\cong} \varinjlim\nolimits_{\theta_{c}} K_*(k[A]).
\]
\end{thm}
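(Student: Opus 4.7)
The plan is to reduce to the finitely generated case by continuity of $K$-theory, apply the Dilation theorem of the paper to replace $K$ by $KH$ in the colimit, and then use $\A^1$-invariance of $KH$ to collapse $KH_*(k[A])$ down to $K_*(k)$ via a direct homotopy argument.

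For the reduction, write $A = \colim_\alpha A_\alpha$ as a filtered colimit of its finitely generated submonoids; each such $A_\alpha$ inherits the properties of being cancellative, torsionfree, and having no non-trivial units. Since algebraic $K$-theory commutes with filtered colimits of rings and the dilations $\theta_c$ are compatible with the inclusions $A_\alpha\hookrightarrow A$, it suffices to prove the theorem for each $A_\alpha$. So assume $A$ finitely generated. Then $X := \Spec k_0[A]$, with $k_0$ the prime subfield of $k$, is an affine toric variety with $X_k = \Spec k[A]$, and Theorem~\ref{dilation-0} in characteristic zero (or Theorem~\ref{MainTheorem} in positive characteristic) yields
\[
\varinjlim\nolimits_{\theta_c} K_*(k[A]) \;\cong\; \varinjlim\nolimits_{\theta_c} KH_*(k[A]).
\]

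To identify the right-hand side, observe that $A$ embeds in its groupification $A^{gp}\cong\Z^n$ as the lattice points of a strictly convex rational polyhedral cone (using the three hypotheses on $A$). Choose a linear functional $|\cdot|: A^{gp}\to\Z$ lying in the interior of the dual cone, so that $|a|>0$ for every $a\in A$ different from the identity and $|ab|=|a|+|b|$. Define an algebra homomorphism
\[
\tau: k[A] \lra k[A][s], \qquad a\mapsto a\,s^{|a|}.
\]
Evaluating at $s=1$ recovers the identity of $k[A]$, while evaluating at $s=0$ yields $\iota\circ\epsilon$, where $\epsilon: k[A]\twoheadrightarrow k$ is the augmentation (well-defined because $A$ has only the trivial unit) and $\iota: k\hookrightarrow k[A]$ is the structural inclusion. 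Both evaluations are retractions of the inclusion $k[A]\hookrightarrow k[A][s]$, so by $\A^1$-invariance of $KH$ they induce the same map on $KH_*$; each is the inverse of $\iota_*$ on $KH_*(k[A][s])$. Hence $\iota_*\epsilon_*$ is the identity on $KH_*(k[A])$, and combined with $\epsilon_*\iota_*=1$ on $KH_*(k)$ this shows that $\iota_*:K_*(k)=KH_*(k)\xrightarrow{\cong} KH_*(k[A])$ (the first equality by regularity of $k$). Since each $\theta_c$ fixes $\iota(k)$ pointwise, it acts as the identity on this copy of $K_*(k)$ and the $\theta_c$-colimit collapses to $K_*(k)$.

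The main obstacle is the input to the middle step: the Dilation theorem is the principal technical content of the paper, and once it is granted the rest is essentially formal. The only delicate points in the homotopy collapse are the existence of the strictly positive grading $|\cdot|$ and the well-definedness of $\epsilon$; both depend on the full hypotheses on $A$, namely cancellative and torsionfree (so $A$ embeds into a lattice) together with the absence of non-trivial units (so the cone is strictly convex and $\epsilon$ is multiplicative).
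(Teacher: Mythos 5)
Your proposal is correct and follows essentially the same route as the paper's own proof (Corollary \ref{Gconjecture}): reduce to finitely generated $A$ by continuity, invoke the Dilation theorem (\ref{dilation-0} / \ref{MainTheorem}), and use a strictly positive integral functional on $A$ (the paper cites Fulton's Separation Theorem for this) to identify $\varinjlim_{\theta_c} KH_*(k[A])$ with $K_*(k)$. The only difference is cosmetic: where the paper simply cites the resulting $\N$-grading of $k[A]$ with $k[A]_0=k$ to conclude $KH_*(k[A])\cong K_*(k)$ with trivial $\theta_c$-action, you spell out the underlying polynomial homotopy $a\mapsto a\,s^{|a|}$ explicitly.
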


Our proof of the Dilation theorem follows the geometric approach used by the
authors in \cite{chww-toric}. That proof
used the Chern character from $K$-theory to cyclic homology to
reduce everything to a problem about the cohomology of K\"ahler
differentials, which could be solved by explicit calculation using
Danilov's sheaves $\tOmega$.

In order to adapt the strategy of that proof,
two main difficulties have to be overcome. First, resolution of
singularities is not available in positive characteristic, and the
usual combinatorial resolution of singularities for toric varieties
is not good enough to control the effect on
algebraic $K$-theory and similar invariants. Overcoming
this difficulty makes it necessary to study the geometry of monoid
schemes and their resolutions; this was done in the companion paper
\cite{chww-monoid} using the Bierstone-Milman theorem.
Second, the ``correct" Chern character to use in
positive characteristic is the cyclotomic trace; its target is
topological cyclic homology, which is much more complicated than
cyclic homology. As a result, it is not sufficient (as in characteristic~0)
to study the cohomology of K\"ahler differentials and
Danilov's sheaves $\tOmega$; instead, we need to study a homotopy
theoretical version of Danilov's sheaves of differentials. The bulk of
this paper is dedicated to that task.

From an aesthetic point of view, the proof contained in this paper and
its companion \cite{chww-monoid} has the advantage that, at its heart, it
is a theorem about monoid schemes --- algebraic data (such as the base field)
only enters the story in its very late stages.
It is an intriguing question whether the original problem
--- the Dilation theorem --- can be formulated (and proved) completely
within the world of monoid schemes (or schemes over ``the field with
one element"). Such a formulation would describe a kind of
$K$-theory functor for monoid schemes, one which is not homotopy invariant.
(All of the current candidates for such a $K$-theory are homotopy
invariant in a strong way; dilations on affine space induce
equivalences so the analogues of \eqref{intro-Dilation} become trivial.)

Here is a more detailed overview of the content of this paper.
Section \ref{sec:monoids} recalls various notions from the world of
monoid schemes as developed in the companion paper \cite{chww-monoid}.
The notions of a partially cancellative monoid (resp., monoid scheme),
seminormal monoid (resp., monoid scheme) and the process of
seminormalization are of special importance for this paper. We remark that toric monoids are torsion-free cancellative and normal, so in particular they are pctf and seminormal.
Section \ref{sec:cycbar} recalls cyclic sets and their realizations
as well as the cyclic bar construction on monoids, $\Ncy$, as used by
B\"okstedt, Hsiang and Madsen \cite{BHM} in their construction of
topological cyclic homology. Section \ref{sec:compare} studies the effect
inverting a sequence of non-trivial dilations has on the cyclic nerve
of a monoid. We introduce $\tNcy$, a variant of the cyclic bar construction
in \ref{def:tNcy} that has better technical properties with regard to
ideals of monoids but is equivalent to the usual cyclic bar
construction after inverting a sequence of dilations.

In Section \ref{sec:tOmega}, we introduce the presheaf of $\S$-spaces
$\tOmega$ that is crucial for our proof of the main theorem. This
presheaf --- whose definition is based on the variant $\tNcy$ of
the cyclic bar construction 
--- plays the role that Danilov's sheaves of differentials played
in our proof of the dilation theorem in characteristic zero
in \cite{chww-toric}.
In \ref{cofibseq}, we show that $\tOmega$ satisfies excision for ideals of
monoids. It is here that working with seminormal monoids is critical.
%
In Section \ref{sec:descent} we build on our work in \cite{chww-monoid}
and prove a technical result (Proposition \ref{prop:cdhdescent})
about recognizing when a presheaf of spectra satisfies $cdh$ descent
adapted to the presheaves we study in this paper.

In Section \ref{sec:dilateddescent} we apply the
results of Sections \ref{sec:tOmega} and \ref{sec:descent} to prove
that presheaves of spectra obtained from smashing a (fixed)
$\S$-spectrum with $\tOmega$ and then taking fixed points for a finite
subgroup of $\S$ satisfy $cdh$ descent. The main theorem of
Section \ref{sec:dilateddescent} (Theorem \ref{MT2p}) forms the
technical core of this article; its proof is inspired by Danilov's
proof that his sheaves satisfy $cdh$ descent ({\em cf.}
\cite{Dan1479}, although of course he does not formulate his result
this way). As a consequence we conclude in Corollary \ref{Cor624} that
the `dilated' topological cyclic homology
$\varinjlim\nolimits_{\theta_c}TC^n(-;p)$ satisfies $cdh$ descent.

In Section \ref{sec:char.0}, we prove the Dilation theorem when
$k$ is a regular $\Q$-algebra.
Finally in Section \ref{sec:maintheorem}, we combine
Corollary \ref{Cor624} with the main result of \cite{chww-monoid}
to prove our main theorem, the Dilation theorem \ref{MainTheorem}
when $k$ is a regular $\F_p$-algebra and $X$ is a pctf monoid scheme. We conclude by
establishing Gubeladze's conjecture (Theorem \ref{MainTheorem-intro})
in Corollary \ref{Gconjecture}.

\goodbreak
\section{Monoids}\label{sec:monoids}

In this section, we present the basic facts about monoids and
monoid schemes that we shall need. We refer to \cite{chww-monoid}
for more details.

Unless otherwise stated, in this paper a {\em monoid} will mean
a pointed abelian monoid; i.e., an abelian monoid object in the
category of pointed sets. More explicitly, it
is a pointed set $A$ equipped with a pairing $\mu: A \smsh A \to A$
that is associative and commutative and has an identity element.

We usually write the pairing $\mu$ of $A$ as multiplication $\cdot$,
in which case the identity element is written $1$ and the basepoint as
$0$, so that $1 \cdot a = a = a \cdot 1$ and $0\cdot a =0=a\cdot0$,
for all $a \in A$. For example, if $R$ is a commutative ring, then
we have the underlying multiplicative monoid $(R,\cdot)$.  If $A$ is
an unpointed abelian monoid (i.e., an abelian monoid object in the
category of sets), we write $A_*$ for the pointed abelian monoid formed
by adjoining a basepoint.

A monoid morphism $f:A\to B$ is a function preserving multiplication,
with $f(0)=0$, $f(1)=1$. If $B$ is the finite union of subsets $Ab_i$,
$b_i\in B$, we say that $f$ is {\it finite}. For a multiplicatively closed
subset $S$ of $A$ (which may or may not contain $0$), there is a
localization morphism $A\to S^{-1}A$, where the monoid $S^{-1}A$ is
defined as the set of fractions $a/s$ 
with $a \in A$ and $s \in S$ with the usual equivalence relation.

Here is some standard terminology.
An {\it ideal} $I$ in a monoid $A$ is a pointed subset such that
$AI\subseteq I$. If $I \subset A$ is an ideal, $A/I$ is the monoid
obtained by collapsing $I$ to $0$. A proper ideal is {\it prime} if
its complement $S=A\backslash\fp$ is multiplicatively closed. In this
case, we write $S^{-1}A$ as $A_\fp$.

For a monoid $A$, we write $\MSpec(A)$ for the set of its prime ideals. We
equip $\MSpec(A)$ with the ``Zariski'' topology, whose
closed subsets are those of the form
$$
V(I) := \{ \fp \in \MSpec(A) \,| \, I \subset \fp\}
$$
where $I$ is ideal of $A$. In contrast with the analogous setting of
rings, every monoid has a unique maximal ideal, given as the complement
of the set of units. The set $\MSpec(A)$ comes equipped with a sheaf
of monoids $\cA$, whose stalk at $\fp \in\MSpec(A)$ is $A_\fp$.
As explained below, $\MSpec(A)$ is an affine ``monoid scheme.''

Given an ideal $I$ of $A$, define its radical to be
$$
\sqrt{I} := \{a \in A \,| \,
	     \text{$a^n \in I$ for some $n \geq 1$}\}.
$$
An ideal $I$ is {\it radical} if $I = \sqrt{I}$.
It is easy to prove (using Zorn's Lemma) that
$\sqrt{I}$ is the intersection of the prime ideals containing $I$.
The {\it nilradical} of a monoid $A$ is the radical of the zero ideal
\begin{align}\label{eqdef:nil}
\nil(A) :=& \sqrt{\{0\}}\\
         =&\{a \in A \, | \, a^n = 0, \text{ for some $n \geq 1$}\}\nonumber.
\end{align}
Equivalently, $\nil(A)$ is the intersection of all the prime ideals of $A$.
For general monoids, the passage from $A$ to $A/\nil(A)$
is not as useful as the corresponding notion for commutative rings,
so we introduce a slightly stronger notion.

A monoid $A$ is said to be {\em reduced} if whenever $a^2 = b^2$
and $a^3 = b^3$ for some $a,b \in A$, then $a=b$. Equivalently,
$A$ is reduced if and only if whenever $a^n = b^n$
for all $n \gg 0$, we have $a=b$.
For any $A$, write $A_\red$ for the monoid obtained by modding out
$A$ by the congruence relation $a \sim b$ if $a^n = b^n$ for all $n
\gg 0$. It is clear that the canonical surjection $A \onto A_\red$ is
universal among maps from $A$ to reduced monoids.
If $A$ is reduced, then $\nil(A)=0$ because $x^n = 0$ implies $x=0$.
The converse does not hold for all monoids, but we shall see that it
holds for pc monoids (defined below) by Proposition \ref{prop:red}.

\subsection{Cancellative and partially cancellative monoids}

A (pointed commutative) monoid $C$ is {\em cancellative}
if whenever $ac = bc$ for some $c\ne0$, we have $a=b$.
Equivalently, $C$ is cancellative if $C \setminus \{0\}$ is an
unpointed monoid that maps injectively to its group completion
$(C\setminus \{0\})^+$. In this situation, the {\em pointed group
completion} of $C$ is
$$
C^+ := \left((C \setminus \{0\})^+\right)_*.
$$
We say a monoid $A$ is {\em torsionfree} if whenever $a^n=b^n$ for $a,b\in A$
and some $n\ge1$, we have $a=b$.  If $C$ is cancellative then
$C$ is torsionfree if and only if
$C^+\setminus\{0\}$ is a torsionfree abelian group.

\begin{ex} \label{ex:red}
Every cancellative monoid $C$ is reduced.
Indeed, if $x^n = y^n$ for all
$n \gg 0$, then $y^ny = y^{n+1} = x^{n+1} = x^nx = y^nx$ for all $n
\gg 0$ and hence $x=y$.
\end{ex}

If $A$ is a cancellative monoid, the {\em normalization} of $A$,
written $A_\nor$, is the submonoid of the pointed group completion
$A^+$ of $A$ consisting of all elements $\alpha \in A^+$ such that
$\alpha^n \in A$ for some $n \geq 1$. We say $A$ is {\it normal}
if $A=A_\nor$. It is easy to see (and proven in \cite[1.6.1]{chww-monoid})
that $\MSpec(A_\nor)\to\MSpec(A)$ is a homeomorphism.
The normalization of non-cancellative monoids is not defined.

\begin{defn}\label{def:pc}
A monoid $A$ is {\em partially cancellative}, or {\em pc} for short,
if $A$ is isomorphic to $C/I$ where $C$ is a cancellative monoid
and $I$ is an ideal of $C$.

A monoid $A$ is {\em partially cancellative and torsionfree}, or
{\em pctf} for short, if $A$ is isomorphic to $C/I$ where $C$ is a
torsionfree cancellative monoid
and $I$ is an ideal of $C$.
\end{defn}

\begin{rem}\label{pc-inspiration}
The name {\it partially cancellative} is inspired by
the following observation:
If $ac = bc$ in a pc monoid, then either $ac=bc=0$ or $a=b$.

To see this suppose $A = C/I$ with $C$ cancellative.
Given $a,b,c \in C$, if $ac = bc$ holds in $A$, then
either $ac, bc \in I$ or $ac=bc$ in $C$ and hence $a=b$.
\end{rem}

\goodbreak
\begin{prop}\label{prop:pc}
Let $\fp$ be a prime ideal in a pc monoid $A$. Then:
\begin{enumerate}

\item 
$A/\fp$ is a cancellative monoid.

\item
If $A$ is pctf, then 
$(A/\fp)^+$ is a torsionfree pointed group.

\item If $A$ is cancellative and normal, then so is $A/\fp$.
\end{enumerate}
\end{prop}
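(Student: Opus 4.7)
The overall strategy is to reduce each statement to the case of a cancellative monoid by lifting $\fp$ to a prime of the cancellative cover. Write $A = C/I$ where $C$ is cancellative (torsionfree in the pctf case, and equal to $A$ in case (3)). Prime ideals of $A = C/I$ correspond bijectively to prime ideals $\fq$ of $C$ containing $I$; moreover one has a canonical identification $A/\fp = C/\fq$. Thus, in each part it suffices to prove the corresponding statement with the additional hypothesis that $A$ itself is cancellative.

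For part (1), assume $A$ is cancellative and $\fp \subset A$ is prime. Suppose $\bar a \bar c = \bar b \bar c$ in $A/\fp$ with $\bar c \neq 0$, i.e.\ with $c \notin \fp$. Lifting to $A$, either $ac, bc \in \fp$, or $ac = bc$ in $A$. In the latter case, since $c \neq 0$ (as $0 \in \fp$), cancellation in $A$ gives $a = b$, so $\bar a = \bar b$. In the former case, primality of $\fp$ together with $c \notin \fp$ forces $a, b \in \fp$, so again $\bar a = 0 = \bar b$.

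For part (2), assume $A = C/I$ is pctf so, after passing to $\fq$, we may assume $A$ is torsionfree and cancellative. By (1), $A/\fp$ is cancellative, so $(A/\fp)^+$ is a genuine pointed group. Suppose $\alpha \in (A/\fp)^+$ satisfies $\alpha^n = 1$ for some $n \geq 1$; write $\alpha = \bar x / \bar y$ with $x, y \notin \fp$. Then $\bar x^n = \bar y^n$ in $A/\fp$, and lifting to $A$: either $x^n \in \fp$, which by primality gives $x \in \fp$ contradicting $x \notin \fp$, or $x^n = y^n$ in $A$. Since $A$ is torsionfree, $x = y$ in $A$ and hence $\alpha = 1$.

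For part (3), assume $A$ is cancellative and normal, so $A = A_{\nor}$ inside $A^+$. By (1), $A/\fp$ is cancellative. Let $\alpha \in (A/\fp)^+$ with $\alpha^n \in A/\fp$, and write $\alpha = \bar x / \bar y$ with $x, y \notin \fp$ and $\alpha^n = \bar z$ for some $z \in A$. Then $\bar x^n = \bar z \bar y^n$ in $A/\fp$; as before, $x^n \notin \fp$ forces $x^n = z y^n$ in $A$. Viewing everything in $A^+$, we get $(x/y)^n = z \in A$, so by normality $x/y \in A$, say $x/y = w \in A$ with $w y = x$. If $w \in \fp$ then $x = wy \in \fp$, a contradiction; therefore $\bar w \neq 0$ and $\bar w = \bar x / \bar y = \alpha$ in $A/\fp$, so $\alpha \in A/\fp$.

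The main point to get right — and the only nontrivial one — is the reduction from the pc case to the cancellative case via lifting primes; once that is in hand, each of the three assertions follows by a short calculation combining the defining cancellation-up-to-zero property of pc monoids (Remark \ref{pc-inspiration}) with the primality of $\fp$ (or $\fq$).
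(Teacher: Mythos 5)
Your proof is correct and follows essentially the same route as the paper: lift $\fp$ to a prime $\fq \supseteq I$ of the cancellative cover $C$, identify $A/\fp$ with $C/\fq$, and in each part lift the relevant equation to $C$, using primality to dispose of the case where both sides land in $\fq$ and cancellativity (resp.\ torsionfreeness, normality) otherwise. The only cosmetic difference is in (2), where you argue directly with elements of the group completion while the paper shows $A/\fp$ is torsionfree cancellative and invokes the standard equivalence with torsionfreeness of $(A/\fp)^+\setminus\{0\}$.
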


\begin{proof}
Say $A = C/I$ with $C$ cancellative and $\fp=\fp'/I$.
Then $A\setminus\fp=C\setminus\fp'$, so assertion $(1)$ follows from
the observation that, for $a,b,c \in C$ with $c\not\in\fp'$,
if $ac,bc\in\fp'$ then $a,b\in\fp'$.
If in addition $C^+$ is torsionfree, suppose that
$a^n=b^n$ in $A/\fp$. Then either $a^n=b^n$ in $C$ or $a^n,b^n\in\fp$.
In either case we must have $a=b$ in $A/\fp$ since $\fp$ is prime
and $C$ is torsion-free. This proves  assertion $(2)$.
For (3), suppose that $a,b,c\in A\setminus\fp$ satisfy $(a/b)^n=c$ in
$(A/\fp)^+$. Then $a^n=b^nc$ in the cancellative monoid $A/\fp$, and
hence in $A$.
When $A$ is normal, this implies that $a=bx$ for some $x\in A$;
since $a=bx$ also holds in $A/\fp$, we have $a/b\in A/\fp$.
\end{proof}

\begin{prop} \label{prop:red} Assume $A$ is a pc monoid.
$A$ is reduced if and only if $\nil(A) = 0$. Moreover,
$A_\red = A/\nil(A)$.
\end{prop}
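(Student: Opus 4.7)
The plan is to handle the two directions of the biconditional separately and then deduce the ``moreover'' clause from the first part. The easy direction is already noted in the text: if $A$ is reduced and $x \in \nil(A)$, then $x^n = 0 = 0^n$ for some $n$, hence $x^m = 0^m$ for all $m \gg 0$, and reducedness forces $x = 0$. This works for any monoid and needs no pc hypothesis.

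For the converse, I would exploit the presentation $A = C/I$ with $C$ cancellative, together with Example \ref{ex:red}, which says that $C$ is already reduced. Suppose $a, b \in A$ satisfy $a^n = b^n$ for all $n \gg 0$, and lift to $\tilde a, \tilde b \in C$. Since $A = C/I$, each equality $a^n = b^n$ means that either $\tilde a^n = \tilde b^n$ in $C$, or else $\tilde a^n, \tilde b^n \in I$. If the second alternative ever occurs, say $\tilde a^{n_0} \in I$, then $a^{n_0} = 0$ in $A$, so $a \in \nil(A) = 0$, giving $a = 0$; and then $b^{n_0} = a^{n_0} = 0$ forces $b \in \nil(A) = 0$ as well, so $a = b$. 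Otherwise $\tilde a^n = \tilde b^n$ in $C$ for all $n \gg 0$, and the reducedness of $C$ gives $\tilde a = \tilde b$, whence $a = b$.

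For the ``moreover'' clause, write $\bar A = A/\nil(A)$. The first step is to observe that $\bar A$ is again a pc monoid: if $A = C/I$ and $\pi : C \to A$ is the quotient, then $J := \pi^{-1}(\nil(A))$ is an ideal of $C$ and $\bar A = C/J$ is pc. Next, $\nil(\bar A) = 0$, since $\bar a^n = 0$ in $\bar A$ means $a^n \in \nil(A)$, hence $a^{nm} = 0$ in $A$ for some $m$, so $a \in \nil(A)$ and $\bar a = 0$. By the converse direction proven above, $\bar A$ is therefore reduced, so the quotient map $A \to \bar A$ factors through $A_{\red}$ by the universal property defining $A_{\red}$. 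Conversely, the map $A \to A_{\red}$ kills $\nil(A)$ since $a^n = 0$ implies $a^m = 0 = 0^m$ for all $m \geq n$, so $a \sim 0$ in the congruence defining $A_{\red}$; this yields a factorization $A \to \bar A \to A_{\red}$. The two induced maps between $A_{\red}$ and $\bar A$ are mutually inverse since each is induced from the quotient $A \to (\cdot)$, and we conclude $A_{\red} = A/\nil(A)$.

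The only subtle step is the verification that the pc property descends to the quotient $A/\nil(A)$; everything else is a routine application of the cancellative case (via Example \ref{ex:red}) and of universal properties. Once that is in hand, there is no real obstacle.
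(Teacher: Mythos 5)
Your proof is correct and follows essentially the same route as the paper: the dichotomy ``either both powers lie in $I$, or the lifts agree in $C$'', combined with Example \ref{ex:red}, is exactly the paper's argument for the nontrivial direction, and the easy direction is the same observation made in the text. For the ``moreover'' clause the paper simply calls it an immediate consequence (the same dichotomy shows the congruence defining $A_\red$ identifies precisely the elements of $\nil(A)$ with $0$); your universal-property argument, via the observations that $A/\nil(A)$ is again pc and has trivial nilradical, is a correct, slightly longer way of filling in that step.
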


\begin{proof}
Suppose $A = C/I$ with $C$ cancellative, $\nil(A) = 0$ and $x^n = y^n$
for all $n \gg 0$. Then either $x^n = y^n = 0$ for all $n \gg 0$ or
$x^n = y^n$ holds in $C$ for all $n \gg 0$. In the former case,
$x=y=0$ since $\nil(A) = 0$; in the latter case $x=y$ holds in $C$
and hence in $A$ since $C$ is reduced by Example \ref{ex:red}.
The final assertion is an immediate consequence.
\end{proof}

\begin{subrem}
Propositions \ref{prop:pc} and \ref{prop:red} fail when the monoid $A$
is not pc, because the quotient monoid $A/\fp$ by a prime ideal $\fp$
need not be cancellative, or reduced.
\end{subrem}

\medskip
\paragraph{\bf Seminormal monoids}
The notion of a seminormal monoid plays a central role in this paper.
Many of our structural results have parallels in the
theory of seminormal rings.

\begin{defn}
A monoid $A$ is {\em seminormal} provided $A$ is reduced and whenever
$x, y \in A$ satisfy $x^3 = y^2$, there is a $z \in A$ such that $x=
z^2, y=z^3$. Since $A$ is reduced, such a $z$ is unique.
\end{defn}

\begin{ex} \label{group:sn}
If $M$ is an abelian group, then the associated pointed monoid $M_*$
is seminormal: Given $x,y \in M_*$ with $x^3 = y^2$,
  either $x=y=0$ or $x = z^2, y = z^3$ where $z = y/x$.
If $A$ is a submonoid of $M_*$ and $x,y\in A$ then $z\in A_\nor$
because $z^2$ is in $A$. Thus normal monoids are seminormal.
\end{ex}

\begin{lem} \label{lem:snloc1}
If $A$ is seminormal and $S$ is a multiplicatively closed subset,
then $S^{-1}A$ is seminormal.
\end{lem}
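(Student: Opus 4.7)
The plan is to verify separately the two defining properties of a seminormal monoid for $S^{-1}A$: that it is reduced, and that for every $x,y \in S^{-1}A$ with $x^3=y^2$ there is a (necessarily unique) $z \in S^{-1}A$ with $z^2=x$ and $z^3=y$. In both parts the strategy is the same: write $x$ and $y$ over a common denominator, clear denominators using an element $u \in S$ to move the relations into $A$, apply the hypothesis on $A$ there, and then transport the conclusion back to $S^{-1}A$.

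For the reducedness of $S^{-1}A$, I would suppose $x^2=y^2$ and $x^3=y^3$ and write $x=a/s$, $y=b/s$. The relations in $S^{-1}A$ produce $u_2, u_3 \in S$ with $u_2 s^2 a^2 = u_2 s^2 b^2$ and $u_3 s^3 a^3 = u_3 s^3 b^3$ in $A$; setting $u=u_2 u_3$ and multiplying through by suitable powers of $u$ yields $(usa)^2=(usb)^2$ and $(usa)^3=(usb)^3$ in $A$. Reducedness of $A$ (which holds because $A$ is seminormal) then gives $usa=usb$, hence $a/s=b/s$ in $S^{-1}A$.

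For the cube-square condition, I would first rewrite $x = a/s^2$ and $y = b/s^3$ for a single $s \in S$, which is always possible by adjusting top and bottom of each fraction by elements of $S$. The equation $x^3=y^2$ then says there is $u \in S$ with $us^6 a^3 = us^6 b^2$. Setting $w := us^6$ --- which lies in $S$ because $S$ is multiplicatively closed --- and multiplying by $w^5$ produces
\[
(w^2 a)^3 \;=\; w^6 a^3 \;=\; w^6 b^2 \;=\; (w^3 b)^2
\]
in $A$. Seminormality of $A$ then yields $z_0 \in A$ with $z_0^2=w^2 a$ and $z_0^3=w^3 b$, and the candidate $z := z_0/(ws) \in S^{-1}A$ satisfies $z^2 = a/s^2 = x$ and $z^3 = b/s^3 = y$ by direct computation.

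The only real obstacle is bookkeeping: every multiplier used in clearing denominators must lie in $S$ rather than merely in $A$, so that the element $z$ actually belongs to $S^{-1}A$ and not just to some overring. This is precisely why one presents $x$ and $y$ as $a/s^2$ and $b/s^3$ with matching powers of a single $s$: it forces the cleared-denominator multiplier $w = us^6$ to be automatically a product of elements of $S$.
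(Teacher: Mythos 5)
Your proposal is correct and follows essentially the same route as the paper: write the fractions with controlled denominators, clear denominators by an element of $S$ to get a cube–square relation in $A$, apply seminormality of $A$, and divide the resulting element by an element of $S$ to produce $z$ in $S^{-1}A$. The only difference is cosmetic bookkeeping (the paper uses a common denominator $x=a/s$, $y=b/s$ and the multiplier $su$, you use $a/s^2$, $b/s^3$ and $ws$), plus your explicit verification that $S^{-1}A$ is reduced, which the paper leaves implicit.
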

\goodbreak

\begin{proof} Say $x,y\in S^{-1}A$
  satisfy $x^3 = y^2$. We may assume $x = a/s$, $y=b/s$
for $a,b \in A, s \in S$.
Then $a^3s^2u = b^2s^3u$ for some $u \in S$ and so
$$
(asu^2)^3 =a^3s^2u (s u^5) =b^2s^3u (s u^5) = (bs^2u^3)^2.
$$
Since $A$ is seminormal, there is a $c \in A$ with
$$
asu^2 = c^2, \quad bs^2u^3 = c^3
$$
and hence if we set $z=c/(su)$ we have
$$
x = a/s = c^2/(s^2u^2) = z^2,  \qquad
y = b/s = c^3/(u^3s^3) = z^3. \qedhere
$$
\end{proof}

\begin{lem} \label{lem:snq}
Suppose $A$ is a pc seminormal monoid and $I$ is an ideal
  of $A$. The monoid $A/I$ is seminormal
if and only if $I$ is a radical ideal.
\end{lem}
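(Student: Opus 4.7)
The plan is to reduce both directions to the dichotomy, valid for a quotient $A/I$ of any monoid by an ideal, that an equality $\overline{u}=\overline{v}$ in $A/I$ holds if and only if either $u=v$ in $A$ or both $u,v\in I$.

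For the ``only if'' direction, I would first note that $A/I$ is again a pc monoid: if $A=C/J$ with $C$ cancellative, and $I'\subset C$ denotes the preimage of $I$, then $A/I\cong C/I'$. Now if $A/I$ is seminormal, then by definition it is reduced, so by Proposition \ref{prop:red} (applied to the pc monoid $A/I$) we have $\nil(A/I)=0$. But an element $\overline{a}\in A/I$ is nilpotent precisely when $a^n\in I$ for some $n\ge 1$, so $\nil(A/I)=0$ says exactly that $a^n\in I\Rightarrow a\in I$, i.e.\ $I=\sqrt{I}$.

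For the ``if'' direction, suppose $I$ is radical. The same computation just given shows $\nil(A/I)=0$, and since $A/I$ is pc, Proposition \ref{prop:red} gives that $A/I$ is reduced. To verify the seminormality condition, suppose $\overline{x}^3=\overline{y}^2$ in $A/I$. By the dichotomy above there are two cases. If both $x^3\in I$ and $y^2\in I$, then by radicality $x,y\in I$, so $\overline{x}=\overline{y}=\overline{0}$ and we may take $\overline{z}=\overline{0}$. Otherwise, $x^3=y^2$ holds in $A$, and seminormality of $A$ furnishes a $z\in A$ with $x=z^2,\ y=z^3$; reducing modulo $I$ gives the desired $\overline{z}$. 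Uniqueness of $\overline{z}$ is automatic because $A/I$ is reduced.

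I do not anticipate a real obstacle; the only subtle point is the dichotomy for equalities in $A/I$, which is immediate from the construction of the quotient monoid by collapsing $I$ to the basepoint. Everything else is a direct application of Proposition \ref{prop:red} and the hypothesis that $A$ is seminormal.
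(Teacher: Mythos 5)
Your proof is correct and follows essentially the same route as the paper's: the radical condition is equivalent to $\nil(A/I)=0$, and the seminormality check splits into the case where both sides vanish (handled by reducedness of $A/I$ via Proposition \ref{prop:red}) and the case where the equation $x^3=y^2$ lifts to $A$ (handled by seminormality of $A$). You merely make explicit two points the paper leaves implicit, namely that $A/I$ is again pc and the dichotomy for equalities in a quotient by an ideal.
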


\begin{proof} One implication is obvious, so
suppose $I$ is radical and $x^3=y^2$ holds in $A/I$. Either $x^3=y^2=0$
(in which case $x=y=0$ since $A/I$ is reduced by Proposition \ref{prop:red}),
or $x^3 = y^2$ holds in $A$.
In this case, since $A$ is seminormal,
the equations $x=z^2$, $y=z^3$ hold in $A$ for some $z \in A$, and hence
we obtain such equations in $A/I$ as well.
\end{proof}

\subsection{Seminormalization}
For a monoid $A$, a {\em seminormalization} of $A$ is a
seminormal monoid $B$, together with a map of monoids $A \to B$
such that the induced map $A_\red \to B$ is injective, and
for every $b \in B$, we have $b^n \in A_\red$ for all $n \gg 0$.

Seminormalizations are universal with respect to maps from $A$ to
seminormal monoids, as we now show.

\begin{lem} \label{lem:ump}
If $i: A \to B$ is a seminormalization of $A$ and $f: A \to C$
  is any map such that $C$ is seminormal, there is a unique map $g: B
  \to C$ such that $g \circ i = f$. In particular, a
  seminormalization of a monoid is unique up to unique isomorphism.
\end{lem}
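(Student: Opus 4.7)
The plan is to construct $g$ element-wise on $B$. Since $C$ is seminormal and hence reduced, every map from $A$ to $C$ factors uniquely through $A_\red$, so I may reduce to the case where $A=A_\red$ and $i:A\hookrightarrow B$ is an inclusion; each $b\in B$ then satisfies $b^n\in A$ for all $n\geq N_b$ for some $N_b$. For such a $b$, I would define $g(b)\in C$ to be the unique element $z$ with $z^n=f(b^n)$ for all sufficiently large $n$. Uniqueness of such a $z$ is immediate from reducedness of $C$; granting existence, $g$ is automatically a monoid homomorphism extending $f$ (compare $n$-th powers and apply reducedness to $(g(bb'))^n$ versus $(g(b)g(b'))^n$), and is moreover the only possible extension of $f$. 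This handles both the uniqueness clause of the lemma and the claimed uniqueness of the seminormalization up to isomorphism (by applying the universal property to two candidate seminormalizations).

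The heart of the matter is the existence of such a $z$, which I plan to extract from the following sublemma: \emph{if $C$ is seminormal (hence reduced) and $u^{M+1}=v^M$ in $C$, then there exists $z\in C$ with $z^M=u$ and $z^{M+1}=v$.} Applied with $u=f(b^M)$ and $v=f(b^{M+1})$ for any $M\geq N_b$ --- for which the hypothesis $u^{M+1}=f(b^{M(M+1)})=v^M$ holds automatically --- this produces $z$ with $z^n=f(b^n)$ for every $n$ in the numerical semigroup generated by $M$ and $M+1$. Since $\gcd(M,M+1)=1$, that semigroup contains every integer $\geq M(M-1)$, so by reducedness $z$ has the defining property of $g(b)$.

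The main obstacle is the sublemma for $M\geq 3$; the cases $M=1,2$ are trivial and the seminormality axiom, respectively. My plan is to prove it by iteratively applying seminormality to enlarge the set $S\subseteq\Z_{\geq 0}$ of exponents $k$ for which a consistent element $\zeta_k\in C$ (playing the role of ``$z^k$'') has been built. Begin with the numerical semigroup generated by $M$ and $M+1$, and set $\zeta_{aM+c(M+1)}:=u^av^c$; this is well-defined and multiplicative because any ambiguity $aM+c(M+1)=a'M+c'(M+1)$ forces $a-a'=t(M+1)$ and $c'-c=tM$, and then $u^av^c/u^{a'}v^{c'}=(u^{M+1}/v^M)^t=1$. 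The seminormality axiom of $C$ then adjoins any $k$ to $S$ whenever $2k,3k\in S$, producing the unique $\zeta_k\in C$ with $\zeta_k^2=\zeta_{2k}$ and $\zeta_k^3=\zeta_{3k}$ (the compatibility $\zeta_{2k}^3=\zeta_{6k}=\zeta_{3k}^2$ holds by the multiplicativity already established). A direct combinatorial argument shows that iterating the closure $[m,\infty)\mapsto[\lceil m/2\rceil,\infty)$ starting from $S\supseteq[M(M-1),\infty)$ eventually forces $1\in S$, and the uniqueness clause of seminormality propagates the consistency and multiplicativity of $\zeta$ throughout. Taking $z:=\zeta_1$ then completes the sublemma and hence the construction of $g$.
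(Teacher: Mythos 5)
Your proof is correct but takes a genuinely different route from the paper's. The paper's argument is a Zorn's lemma argument: after reducing to $A$ reduced and viewing $A\subseteq B$, it picks a maximal pair $(B',h)$ with $A\subseteq B'\subseteq B$ and $h\colon B'\to C$ extending $f$, finds $b\in B\setminus B'$ with $b^2,b^3\in B'$ (possible because only finitely many powers of any element of $B$ lie outside $B'$), and extends $h$ to $B'[b]$ by a single application of the $(2,3)$-seminormality axiom; it therefore only ever needs the $M=2$ case. Your element-wise construction of $g$ instead hinges on the sublemma that the $(2,3)$-condition in a seminormal monoid forces the $(M,M+1)$-condition for every $M$, a monoid analogue of Hamann's criterion for seminormal rings. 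The sublemma and your reduction to it are both sound, but two small points in the sketch deserve filling in. First, the ``division'' in the well-definedness check for $\zeta$ on $\langle M,M+1\rangle$ should be replaced by a substitution (take $a\geq a'$, so $a=a'+t(M+1)$, $c'=c+tM$, and then $u^av^c=u^{a'}(u^{M+1})^tv^c=u^{a'}(v^M)^tv^c=u^{a'}v^{c'}$), since $C$ is only a monoid. Second, to justify adjoining $k$ one needs $\zeta(6k)$ to be unambiguous, so the invariant to maintain through each closure step is that $S$ stays a numerical semigroup and $\zeta\colon(S,+)\to(C,\cdot)$ stays a homomorphism; this propagates because when $k$ is adjoined, $(\zeta_k\zeta_s)^n=\zeta_{k+s}^n$ for $n=2,3$ and reducedness of $C$ then gives $\zeta_k\zeta_s=\zeta_{k+s}$. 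With those details supplied, your argument goes through. In exchange for proving the sublemma, your approach is constructive and describes $g$ pointwise, while the paper's Zorn argument is shorter but non-constructive.
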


\begin{proof} By the universal mapping property of $A_\red$, we
may assume $A$ is reduced, so that $A$ may be regarded as a
submonoid of $B$. Consider all pairs $(B', h)$ with
$A\subset B' \subset B$ and $h: B' \to C$ a map such that $h|_A = f$.
This is an ordered set in the evident way and by Zorn's lemma
there is a maximal element $(B',h)$. It suffices to prove $B'=B$.

If not, we may find a $b \in B \setminus B'$ such that $b^2, b^3 \in B'$.
Let $x = h(b^2), y = h(b^3)$, so that $x^3 = h(b^6) = y^2$.
Since $C$ is seminormal, there is a $z$ with $x=z^2, y=z^3$.
Consider $B'' = B'[b]$, the smallest submonoid of $B$ containing $B'$
and $b$. Every element of $B''$ may be written (non-uniquely) either
as an element of $B'$ or as $yb$ with $y \in B'$.

Consider the set $B'b$ of all elements of $B''$ of the form $t=yb$
with $y\in B'$. We claim that for each $t$ the element $h(y)z$ of $C$
is independent of the choice of $y$. If $y_1b = y_2b$ for $y_1,y_2 \in B'$,
then for all $m\ge2$:
$$
(h(y_1)z)^m = h(y_1^mb^m) = h(y_2^mb^m) = (h(y_2)z)^m.
$$
Since $C$ is reduced, it follows that $h(y_1)z = h(y_2)z$, as claimed.
Thus $h''(t)=h(y)z$ is a well defined element of $C$.
If $t\in B'$ then $h''(t)^n=h(y^n)z^n=h(yb)^n=h(t)^n$,
whence $h''(t)=h(t)$. This shows that $h$ extends to a function
$h'':B''\to C$. A similar argument shows $h''$ is a homomorphism
of monoids, contradicting the maximality of $(B',h)$. Thus $B' = B$.


For the uniqueness, if $g_1, g_2: B \to C$ are two such maps, then for each $b \in
B$, we have $b^n \in A$ for all $n \gg 0$ and hence $g_1(b)^n = f(b^n)
= g_2(b)^n$ for all $n \gg 0$. Since $C$ is reduced, $g_1 = g_2$.
\end{proof}

If $A$ has a seminormalization, then by Lemma \ref{lem:ump}
we are justified in calling it {\em the} seminormalization of $A$,
and we write it as $A_\sn$.
It is clear that the canonical map $A \to A_\sn$ is an isomorphism
if (and only if) $A$ is seminormal, and hence that
$A_\sn \map{\cong} (A_\sn)_\sn$ is always an isomorphism.

\begin{ex} \label{ex:snc}
If $A$ is cancellative with pointed group completion $A^+$, then
$$
A_\sn = \{f \in A^+ \, | \, \text{ $f^n \in A$ for all $n \gg 0$}\}
$$
(with the canonical inclusion $A \into A_\sn$) is the
seminormalization of $A$. This follows from Example \ref{group:sn},
which shows that $A_\nor$ is seminormal.
Thus $A \subseteq A_\sn \subseteq A_\nor$.
\end{ex}
%

\begin{lem} \label{lem:snloc2}
For any monoid $A$, if its seminormalization $A \to A_\sn$ exists,
then for any multiplicatively closed subset $S$ of $A$, the map
$S^{-1}A \to S^{-1}A_\sn$ is the seminormalization of $S^{-1}A$.
\end{lem}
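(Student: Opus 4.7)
The plan is to verify directly that the canonical map $S^{-1}A \to S^{-1}A_\sn$ satisfies the three defining conditions of a seminormalization: namely, $S^{-1}A_\sn$ is seminormal, the induced map $(S^{-1}A)_\red \to S^{-1}A_\sn$ is injective, and for every $b \in S^{-1}A_\sn$, the power $b^n$ lies in the image of this injection for $n \gg 0$. The first assertion is immediate from Lemma \ref{lem:snloc1} applied to the seminormal monoid $A_\sn$.

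For the power condition, write $b = a/s$ with $a \in A_\sn$ and $s \in S$. Since $A \to A_\sn$ is a seminormalization of $A$, there is some $N \geq 1$ such that for all $n \geq N$ the element $a^n \in A_\sn$ equals the image of some $\alpha_n \in A_\red$; choosing a lift of $\alpha_n$ to $A$, the fraction $\alpha_n/s^n \in S^{-1}A$ represents a class in $(S^{-1}A)_\red$ whose image in $S^{-1}A_\sn$ is exactly $b^n = a^n/s^n$.

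The heart of the argument, and the main obstacle, is injectivity of $(S^{-1}A)_\red \to S^{-1}A_\sn$. Suppose $x/s$ and $y/t$ in $S^{-1}A$ have equal images in $S^{-1}A_\sn$; then $utx = usy$ in $A_\sn$ for some $u \in S$. Both sides lie in $A$, and because $A_\red \hookrightarrow A_\sn$ is injective (this is part of the hypothesis that $A \to A_\sn$ is a seminormalization), the elements $utx$ and $usy$ are already equal in $A_\red$. By the very definition of $A_\red$ this means $(utx)^n = (usy)^n$ in $A$ for all $n \gg 0$; pushing this identity into $S^{-1}A$ and dividing both sides by $(ust)^n$ yields $(x/s)^n = (y/t)^n$ in $S^{-1}A$ for $n \gg 0$, hence $x/s = y/t$ in $(S^{-1}A)_\red$.

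The only delicate point is bookkeeping: one must check that the congruence relation defining $A_\red$ transports correctly along localization into the congruence defining $(S^{-1}A)_\red$, and that each use of the injectivity of $A_\red \hookrightarrow A_\sn$ is applied to genuine elements of $A$ (not merely to fractions in $A_\sn$). Beyond that, the verifications are routine manipulations with fractions, so no additional structural results are needed.
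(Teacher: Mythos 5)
Your proof is correct and follows essentially the same strategy as the paper: verify the three defining conditions of a seminormalization. The paper compresses the injectivity step into the one-line assertions that $(S^{-1}A)_\red \cong S^{-1}(A_\red)$ and that localization preserves injections, whereas you verify injectivity by a direct element-level computation that unpacks exactly those two facts.
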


\begin{proof} We have $(S^{-1}A)_\red \cong S^{-1}(A_\red)$ and the map
$S^{-1}(A_\red) \to S^{-1}A_\sn$ is injective, since localization
preserves injections. By Lemma \ref{lem:snloc1}, $S^{-1}(A_\sn)$ is
seminormal. Finally, if $x \in S^{-1}(A_\sn)$, then $x^n \in
S^{-1}A$ for all $n \gg 0$.
\end{proof}

\begin{lem} \label{lem:newsn}
Let $A$ be a $pc$ monoid.
Assume its seminormalization $A \to A_\sn$ exists, and let $I$ be any
ideal of $A$. Set $J = \sqrt{IA_\sn}$. Then the induced map
$$
A/I \to A_\sn/J
$$
is the  seminormalization of $A/I$.
\end{lem}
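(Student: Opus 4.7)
The plan is to verify directly the three defining properties of a seminormalization for the map $A/I \to A_\sn/J$. Since quotients of pc monoids are pc, Proposition \ref{prop:red} lets me identify $(A/I)_\red$ with $A/\sqrt{I}$. The three conditions to establish are therefore: (a) $A_\sn/J$ is seminormal; (b) the induced map $A/\sqrt{I} \to A_\sn/J$ is well-defined and injective; and (c) every $b \in A_\sn/J$ satisfies $b^n \in A/\sqrt{I}$ for all $n \gg 0$.

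For (a), I would lift every relation back to $A_\sn$ and exploit that $J$ is radical by construction. Given $x^3 = y^2$ in $A_\sn/J$ with lifts $\tilde x, \tilde y \in A_\sn$, either the relation already holds in $A_\sn$, in which case seminormality of $A_\sn$ produces a $z \in A_\sn$ whose image does the job, or both $\tilde x^3$ and $\tilde y^2$ lie in $J$; the radical property forces $\tilde x, \tilde y \in J$ and so $x = y = 0$ in the quotient, and $z = 0$ works. A parallel dichotomy, combined with reducedness of $A_\sn$, shows that $A_\sn/J$ is reduced.

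For (b), well-definedness is immediate: $a \in \sqrt{I}$ gives $a^n \in I \subseteq J$, whence $a \in J$. For injectivity, suppose $a, b \in A$ have equal images in $A_\sn/J$; considering their images in $A_\sn$, either they coincide there — in which case $A_\red \hookrightarrow A_\sn$ yields equality in $A_\red$ and hence in the further quotient $A/\sqrt{I}$ — or both land in $J$. In the second case I must show $a, b \in \sqrt{I}$: if the image of $a$ in $A_\sn$ satisfies $\bar a^n = i \cdot s$ with $i \in I$ and $s \in A_\sn$, I pick $m$ with $s^m \in A_\red$, so that $\bar a^{nm} = i^m s^m$ lies in the image of $I$ in $A_\red$; unwinding the equivalence relation defining $A_\red$ then yields $a^{nmN} \in I$ for $N \gg 0$, i.e., $a \in \sqrt{I}$. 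This step — controlling elements of $IA_\sn$ that descend from $A$ — is the main technical obstacle, and it rests on the characterizing property that every element of $A_\sn$ has a high power in $A_\red$.

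For (c), any lift $\tilde b \in A_\sn$ of $b$ satisfies $\tilde b^n \in A_\red$ for $n \gg 0$, so $\tilde b^n$ is the image of some $a \in A$; then $b^n$ is the image of $\bar a \in A/\sqrt{I}$ under the canonical map, giving the required membership.
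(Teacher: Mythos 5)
Your proposal is correct and follows essentially the same route as the paper: both verify the definition of seminormalization directly, with the key step being the same computation that an element $a$ whose image lies in $J=\sqrt{IA_\sn}$ satisfies $a^{nm}\in I$ (write $\bar a^{\,n}=\bar\imath\, s$ and use that high powers of $s\in A_\sn$ lie in $A_\red$), and with seminormality of $A_\sn/J$ coming from radicality of $J$. The only differences are cosmetic: the paper first reduces to $A$ reduced and $I$ radical (so its claim reads $J\cap A=I$) and cites Lemma \ref{lem:snq}, whereas you carry the $A_\red$ bookkeeping along and reprove that lemma inline.
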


\begin{proof}
The map $A \to A_\sn$ factors through $A_\red$ and $A/I \to A_\sn/J$
factors through $(A/I)_\red = A/\sqrt{I}$. These factorizations
allow us to assume $A$ is reduced and $I$ is radical.

We claim that $J \cap A = I$. Clearly $I\subseteq J\cap A$.
If $a \in J \cap A$, then $a^n \in IA_\sn$ for all large $n$. For any
$b \in A_\sn$ we have $b^n \in A$ for all $n \gg 0$.  It follows that
$a^m \in I$ for some $m$ (and even for all $m \gg 0$).
Since $I$ is radical in $A$, we get $a \in I$.

Since $J \cap A = I$, the map $A/I \to A_\sn/J$ is injective.
Given $y \in A_\sn/J$, it is clear that $y^n \in A/I$ for all $n \gg
0$.  Finally, $A_\sn/J$ is seminormal by Lemma \ref{lem:snq}.
\end{proof}

\begin{prop} \label{prop:snfunctor}
If $A$ is pc, then its seminormalization $A \to A_\sn$ exists
and $A_\sn$ is also a pc monoid. If $A$ is pctf then so is $A_\sn$.

Moreover, there is a functor $(-)_\sn$ from
the category of pc monoids to the category of seminormal pc monoids,
sending a monoid to its seminormalization. The maps $A \to A_\sn$
determine a natural transformation from the identity to $(-)_\sn$.
\end{prop}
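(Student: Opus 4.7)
The plan is to reduce the construction of $A_\sn$ to the cancellative case already handled in Example \ref{ex:snc}, and then invoke Lemma \ref{lem:newsn} to pass to quotients.

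Write $A = C/I$ with $C$ cancellative; if $A$ is pctf, choose $C$ to be torsion-free cancellative. By Example \ref{ex:snc}, the cancellative monoid $C$ has a seminormalization, realized as the submonoid
$$
C_\sn = \{ f \in C^+ \mid f^n \in C \text{ for all } n \gg 0 \}
$$
of the pointed group completion $C^+$. Since $C_\sn$ lies inside the pointed group $C^+$, whose nonzero elements form a group, $C_\sn$ is cancellative; if furthermore $C^+\setminus\{0\}$ is torsion-free (which is equivalent to $A$ being pctf), then so is $C_\sn$.

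Now $C$ itself is pc (take the ideal to be $\{0\}$), and its seminormalization has just been shown to exist. Thus Lemma \ref{lem:newsn} applies to the pair $(C, I)$ and gives a seminormalization
$$
A_\sn = (C/I)_\sn = C_\sn / \sqrt{I C_\sn}.
$$
Since $C_\sn$ is (torsion-free) cancellative and $\sqrt{I C_\sn}$ is an ideal, the quotient $A_\sn$ is pc, and pctf when $A$ is. This proves the first two sentences of the proposition.

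For functoriality, given a morphism $f \colon A \to B$ of pc monoids, the composite $A \to B \to B_\sn$ maps into a seminormal monoid, so Lemma \ref{lem:ump} produces a unique map $f_\sn \colon A_\sn \to B_\sn$ with $f_\sn \circ i_A = i_B \circ f$, where $i_A, i_B$ are the canonical maps. The uniqueness clause of Lemma \ref{lem:ump} forces $(gf)_\sn = g_\sn \circ f_\sn$ and $(1_A)_\sn = 1_{A_\sn}$, making $(-)_\sn$ a functor, and the defining identity $f_\sn \circ i_A = i_B \circ f$ is exactly the statement that $i$ is a natural transformation from the identity functor to $(-)_\sn$.

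I do not anticipate any real obstacle: the entire content of the proposition has been prepared by the preceding lemmas, and what remains is only to assemble them. The only detail requiring care is verifying that the submonoid $C_\sn \subseteq C^+$ inherits the (torsion-free) cancellative property, but this is immediate from its embedding in a (torsion-free) pointed abelian group.
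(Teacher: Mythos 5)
Your proposal is correct and follows essentially the same route as the paper: write $A=C/I$, use Example \ref{ex:snc} for $C_\sn$, Lemma \ref{lem:newsn} to get $A_\sn=C_\sn/\sqrt{IC_\sn}$ (hence pc, and pctf when $C$ is torsionfree), and the universal property of Lemma \ref{lem:ump} for functoriality and naturality. The only difference is that you spell out the easy verification that $C_\sn\subseteq C^+$ inherits cancellativity and torsionfreeness, which the paper leaves implicit.
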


\begin{proof}
By assumption $A = C/I$ for a cancellative monoid $C$. By Example
\ref{ex:snc}, $C_\sn$ exists; by Lemma \ref{lem:newsn}, $A_\sn$ exists
and $A_\sn = C_\sn/J$ with $J = \sqrt{IC_\sn}$. Since $C_\sn$ is
cancellative, this shows $A_\sn$ is pc. If $C$ is also torsionfree,
then $C_\sn$ is torsionfree and hence $A_\sn$ is pctf.

Pick, once and for all, a seminormalization $A \to A_\sn$ for each pc
monoid $A$. The universal mapping property (Lemma \ref{lem:ump}) shows
that given a morphism $f: A \to B$ of pc monoids, there is a unique
map $f_\sn: A_\sn \to B_\sn$ causing the evident square to
commute. The assignments $A \mapsto A_\sn$ and $f \mapsto f_\sn$
determine a functor and the maps $A \to A_\sn$ form a natural
transformation as claimed.
\end{proof}


\begin{lem}\label{sn:homeo}
For any pc monoid $A$, the map $\MSpec(A_\sn) \to \MSpec(A)$
of affine monoid schemes is a homeomorphism on underlying
topological spaces.
\end{lem}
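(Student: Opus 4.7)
The plan is to exhibit an explicit inverse to the continuous map $\varphi:\MSpec(A_\sn)\to\MSpec(A)$, $\fq\mapsto\fq\cap A$, and then verify closedness by pulling back ideals. First, since $\MSpec(A)$ depends on $A$ only through $A_\red$, and since the seminormalization factors through $A_\red$ (so $(A_\red)_\sn=A_\sn$), I would reduce at the outset to the case that $A$ is reduced. Then by the defining properties of a seminormalization, the map $A\to A_\sn$ is an \emph{injection} and, crucially, every $b\in A_\sn$ satisfies $b^n\in A$ for all $n\gg0$. This single property is what makes the map on $\MSpec$ well-behaved.

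Next I would construct the inverse. Given a prime $\fp\subset A$, set
$$
\fq(\fp)\;:=\;\{\,b\in A_\sn\;:\; b^n\in\fp\text{ for some }n\ge1\,\}.
$$
The first main step is to show $\fq(\fp)$ is a prime ideal of $A_\sn$ with $\fq(\fp)\cap A=\fp$. For the ideal property, given $b\in\fq(\fp)$ with $b^n\in\fp$ and any $c\in A_\sn$, choose $m\gg0$ so that $c^m\in A$; then $(bc)^{mn}=(b^n)^m(c^m)^n$ lies in $\fp$ because $b^n\in\fp\subset A$ and $\fp$ is an $A$-ideal. For primality, if $bc\in\fq(\fp)$ then $(bc)^n\in\fp$ for some $n$; choosing $m\gg0$ so that both $b^m,c^m\in A$ gives $b^{mn}\cdot c^{mn}=(bc)^{mn}\in\fp$ with both factors in $A$, hence one of them lies in $\fp$ by primality of $\fp$ in $A$. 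That $\fq(\fp)\cap A=\fp$ uses that $\fp$ is radical (being prime). The same power trick shows $\fq(\fp)$ is the \emph{unique} prime of $A_\sn$ restricting to $\fp$: if $\fq_0\cap A=\fp$ and $b\in\fq_0$ then $b^n\in A\cap\fq_0=\fp$, so $b\in\fq(\fp)$; conversely $b\in\fq(\fp)$ gives $b^n\in\fp\subset\fq_0$, so $b\in\fq_0$ by primality. Thus $\varphi$ is a continuous bijection.

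Finally I would verify $\varphi$ is closed, and hence a homeomorphism, by proving the identity
$$
\varphi\bigl(V(J)\bigr)\;=\;V(J\cap A)
$$
for any ideal $J\subset A_\sn$. The inclusion ``$\subseteq$'' is immediate. For the reverse, given $\fp\supseteq J\cap A$, take $\fq=\fq(\fp)$; any $b\in J$ has $b^n\in A$ and $b^n\in J$ for some $n$, so $b^n\in J\cap A\subseteq\fp$, whence $b\in\fq$. Thus $\fq\supseteq J$ lies in $V(J)$ with $\varphi(\fq)=\fp$. The only genuinely delicate step is the primality check for $\fq(\fp)$, where one must pass back and forth between $A_\sn$ and $A$ using the ``eventual power'' property of the seminormalization; this is where the hypothesis that $A$ is pc enters only implicitly, via Proposition~\ref{prop:snfunctor} which guarantees $A_\sn$ exists and shares this property.
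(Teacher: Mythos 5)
Your proof is correct and follows essentially the same route as the paper: after reducing to $A$ reduced, the paper also exhibits the inverse $\fp\mapsto\{x\in A_\sn : x^n\in\fp \text{ for } n\gg0\}$ (which coincides with your $\fq(\fp)$) and asserts its continuity, which is exactly what your closedness identity $\varphi(V(J))=V(J\cap A)$ verifies. You simply supply more of the routine details (primality of $\fq(\fp)$, uniqueness of the prime over $\fp$) that the paper leaves implicit.
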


\begin{proof}
Since $\MSpec(A)\cong\MSpec(A_\red)$, we may assume $A$ reduced.
The function $\MSpec(A) \to\MSpec(A_\sn)$ sending $\fp$ to
$\tilde{\fp} := \{x \in A_\sn \, | \, x^n \in \fp \text{ for } n \gg0\}$
is a continuous inverse of the canonical map $\MSpec(A_\sn) \to\MSpec(A)$.
\end{proof}

Recall from \cite[after Remark 2.9.1]{chww-monoid} that finitely generated
monoids are {\it noetherian}: they satisfy the ascending chain condition
on ideals, and every ideal is finitely generated.

\begin{lem}\label{Asn-finite}
If $A$ is a finitely generated pc monoid, then
$A\to A_\sn$ is a finite morphism.
\end{lem}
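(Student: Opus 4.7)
The plan is to reduce to the case of a finitely generated torsion-free cancellative monoid, and then invoke the classical Gordan-type finiteness of the normalization. Since $A$ is finitely generated, we can write $A = C/I$ where $C$ is the free commutative pointed monoid on a finite set of generators of $A$; such a $C$ is finitely generated, cancellative, and torsion-free. By the proof of Proposition \ref{prop:snfunctor}, $A_\sn = C_\sn/\sqrt{IC_\sn}$. Hence, if we can show $C \to C_\sn$ is finite, say $C_\sn = \bigcup_k C b_k$, then passing to quotients yields $A_\sn = \bigcup_k A\bar b_k$, establishing finiteness of $A \to A_\sn$. This reduces the lemma to the statement that $C \to C_\sn$ is finite for any finitely generated, torsion-free, cancellative monoid $C$.

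For such $C$, Example \ref{ex:snc} gives $C \subseteq C_\sn \subseteq C_\nor$, and by the classical Gordan-type result the inclusion $C \subseteq C_\nor$ is finite; choose $b_1, \ldots, b_n \in C_\nor$ with $C_\nor = \bigcup_{i=1}^n C b_i$. For each $i$, set
$$
I_i = \{c \in C : c b_i \in C_\sn\}.
$$
This is an ideal of $C$: if $c \in I_i$ and $c' \in C$, then for all $n \gg 0$, $(c'c b_i)^n = (c')^n(c b_i)^n \in C$, so $c'c \in I_i$. Finitely generated monoids are noetherian (as recalled just before the statement of the lemma), so each $I_i$ is finitely generated, say by $c_{i,1}, \ldots, c_{i,m_i}$. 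Now any $x \in C_\sn$ lies in some $Cb_i$, so $x = c b_i$ for some $c \in C$, and such a $c$ necessarily lies in $I_i$; writing $c = c' c_{i,j}$ for some $c' \in C$ and some $j$, we get $x \in C \cdot (c_{i,j} b_i)$. Thus $C_\sn = \bigcup_{i,j} C \cdot (c_{i,j} b_i)$ is a finite union of $C$-translates of elements of $C_\sn$, which shows $C \to C_\sn$ is finite.

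The essential external input is the finiteness of $C_\nor$ over $C$ for finitely generated, torsion-free, cancellative $C$ (Gordan's lemma); once that is in hand, the key conceptual step is recognizing $C_\sn$ as a noetherian-controlled subset of $C_\nor$ via the ideals $I_i$, so that the noetherian property recorded in the text produces a finite generating set. This is where the argument could fail if one did not first reduce to a cancellative (and ideally torsion-free) base, which is why the reduction through Proposition \ref{prop:snfunctor} to a free commutative $C$ is carried out at the very start.
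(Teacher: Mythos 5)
The core of your argument --- controlling $C_\sn$ inside $C_\nor$ via the ideals $I_i = \{c \in C : c b_i \in C_\sn\}$ and invoking noetherianness of finitely generated monoids --- is exactly the paper's argument for the cancellative case, down to the notation. So the main idea is right. But your reduction at the very start contains a genuine error.

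You claim that a finitely generated pc monoid $A$ can always be written as $C/I$ with $C$ the \emph{free} pointed commutative monoid on a finite generating set of $A$. This is false. Take $A = \langle 2,3\rangle_*$, the pointed monoid of the numerical semigroup generated by $2$ and $3$ inside $\N$. This $A$ is finitely generated and cancellative (hence pc), but the surjection $F_2 = \langle x_1, x_2\rangle \onto A$ sending $x_1\mapsto 2$, $x_2\mapsto 3$ has kernel that identifies the two nonzero elements $x_1^3$ and $x_2^2$; that identification cannot be realized by collapsing an ideal to the basepoint. So $A$ is not of the form $F/I$ with $F$ free. More generally, quotients by ideals only set elements to $0$ --- they do not enforce relations among nonzero elements --- so $F/I$ is never able to recover a non-free cancellative monoid.

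The correct reduction (which is what the paper does) is to write $A = C/I$ with $C$ cancellative but not necessarily free, and then address the separate subtlety that this $C$ need not be finitely generated even when $A$ is: one must replace $C$ by a finitely generated submonoid (the paper cites the proof of part (1) of \cite[9.1]{chww-monoid} for this). Your step using Lemma \ref{lem:newsn} to get $A_\sn = C_\sn/\sqrt{IC_\sn}$ and your step-3 argument with the ideals $I_i$ only require $C$ to be finitely generated and cancellative --- nothing uses freeness or torsion-freeness --- so once the reduction is stated correctly, the rest of your proof goes through and matches the paper's.
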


\begin{proof}
Suppose first that $A$ is cancellative; by \cite[6.3]{chww-monoid},
$A\to A_\nor$ is finite, so $A_\nor$ is given as a finite union $\cup  Ac_i$.
Since $A$ is finitely generated, so is every ideal of $A$, including
$J_i=\{ a\in A: ac_i\in A_\sn\}$.
Writing $J_i$ as a finite union $J_i = \cup Ab_{ij}$ we have
$x_{ij} := b_{ij}c_i\in A_\sn$ and $A_\sn=\cup Ax_{ij}$.
This proves that $A\to A_\sn$ is finite. (Cf.\ \cite[2.31]{ChuLS}.)

In the general case, write $A$ as $C/I$ with $C$ cancellative; replacing
$C$ by a submonoid, we may assume that $C$ is finitely generated
(see\ \cite[Proof of part (1) of 9.1]{chww-monoid}).
By Lemma \ref{lem:newsn},
$A_\sn=C_\sn/J$ for an ideal $J$. Since $C \to C_\sn$
is finite, $A \to A_\sn$ is also finite.
\end{proof}

The {\em conductor ideal} for an inclusion $A\subset B$ is
the ideal $I$ of $A$ consisting of those $a \in A$ with
$B \cdot a \subset A$. Observe that $I$ is also an ideal of $B$.
If $A\subset B$ is finite and $S\subset A$ is multiplicatively closed,
the conductor of $S^{-1}A\subset S^{-1}B$ is $S^{-1}I$.

\begin{ex} \label{ex:sn}
Let $A$ denote the submonoid of the free monoid $\langle x,y\rangle$
generated by $x^2,xy,y^2,x^2y,xy^2$. Then $A$ is seminormal, and
$A_\nor=\langle x,y\rangle$. The conductor ideal for $A\subset A_\nor$
is $I=\{ x^my^n: mn>0\}$.
\end{ex}

\begin{lem} \label{lem:cond}
Suppose $A$ is seminormal and cancellative.
Then the conductor ideal $I$ for the inclusion $A \into A_\nor$
is a radical ideal of both $A$ and $A_\nor$.
\end{lem}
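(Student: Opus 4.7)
The plan is to reduce the two radicality claims to a single one for $A_{\nor}$, and then exploit the characterization of seminormality available from Example \ref{ex:snc}.

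First, I would observe that it suffices to show $I$ is a radical ideal of $A_{\nor}$. Indeed, suppose $a \in A$ satisfies $a^n \in I$. Since $A \subseteq A_{\nor}$ and $I$ is an ideal of $A_{\nor}$, radicality in $A_{\nor}$ would give $a \in I$ (and $a$ is automatically in $A$), so $I$ is radical as an ideal of $A$ as well.

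Next, I would record the following reformulation of seminormality that is immediate from Example \ref{ex:snc}: since $A$ is seminormal, $A$ coincides with $A_{\sn}$, so an element $\alpha \in A_{\nor} \subseteq A^+$ lies in $A$ if and only if $\alpha^m \in A$ for all $m \gg 0$.

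The heart of the proof is then the following short computation. Take $x \in A_{\nor}$ with $x^n \in I$, and let $y \in A_{\nor}$ be arbitrary; I want to show $xy \in A$, which will establish $x \in I$. For every $m \geq n$, one has
\[
(xy)^m = x^n \cdot (x^{m-n} y^m),
\]
and since $x^n \in I$ while $x^{m-n} y^m \in A_{\nor}$, we get $(xy)^m \in I \cdot A_{\nor} \subseteq I \subseteq A$. Thus $(xy)^m \in A$ for all $m \gg 0$, and since $xy \in A_{\nor}$, the seminormality characterization above forces $xy \in A$. Since $y$ was arbitrary, $A_{\nor} \cdot x \subseteq A$, i.e.\ $x \in I$.

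I do not anticipate any real obstacle here: once the correct reformulation of seminormality (from Example \ref{ex:snc}) is in hand, the identity $(xy)^m = x^n \cdot (x^{m-n}y^m)$ does all the work, and the reduction to proving radicality in $A_{\nor}$ eliminates the need to treat the two cases separately.
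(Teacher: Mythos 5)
Your proposal is correct and follows essentially the same route as the paper's proof: reduce to radicality in $A_\nor$, then for $x^n\in I$ and $\beta\in A_\nor$ use $(\beta x)^m=(\beta^m x^{m-n})x^n\in A$ for $m\ge n$ together with the seminormality criterion of Example \ref{ex:snc} to conclude $\beta x\in A$, hence $x\in I$. The only cosmetic difference is that the paper verifies $x\in A$ as a separate first step, while in your argument it is the case $\beta=1$; no gap.
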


\begin{proof}
It suffices to prove $I$ is radical in $A_\nor$.
Say $\alpha \in A_\nor$ and $\alpha^n \in I$. Since $I$ is an
ideal of $A_\nor$, we have $\alpha^m \in I$ for all $m \geq n$
and hence, since $A$ is seminormal, $\alpha \in A$. For each
$\beta \in A_\nor$ and all $m\ge n$ we have
$$
(\beta \cdot \alpha)^m = (\beta^m \alpha^{m-n}) \alpha^n \in A,
$$
since $\alpha^n \in I$. Since $A$ is seminormal, we have
$\beta \cdot \alpha \in A$, proving that $\alpha \in I$.
\end{proof}

\begin{subrem}\label{conductorsheaf}
If $A$ is a finitely generated and cancellative monoid,
$A\subset A_\nor$ is finite; see \cite[6.3]{chww-monoid}.
If $I$ is the conductor and $S$ is multiplicatively closed in $A$,
it follows that $S^{-1}I$ is the conductor of $S^{-1}A\subset S^{-1}A_\nor$.
Therefore the conductor defines a sheaf of ideals on $\MSpec(A)$.
\end{subrem}

\medskip
\paragraph{\bf Seminormal monoid schemes}
By a {\it monoid scheme} we mean a topological space $X$, equipped with
a sheaf of monoids $\cA$, which can be covered by open subspaces
isomorphic to
$\MSpec(A)$ for some monoid $A$.
We refer to \cite{chww-monoid} for a more precise definition.

We now review the most relevant definitions. If $\cI$ is a (quasi-coherent)
sheaf of ideals of $\cA$, there is a monoid subscheme $Z$ whose
topological space is $V(\cI)$ and whose sheaf of monoids is $\cA/\cI$.
We call such a subscheme an {\it equivariant closed} subscheme.
A monoid scheme $X$ is of {\em finite type} if the underlying
topological space is finite and every stalk $\cA_x$ is a finitely
generated monoid. Since finitely generated monoids are noetherian,
monoid schemes of finite type have
the descending chain condition on equivariant closed subschemes.

For technical reasons, we will need our monoid schemes to be
{\it separated} in the sense of \cite[3.3]{chww-monoid}:  $X$ is separated if
the diagonal $X\to X\times X$ is a closed immersion
(see \cite[2.5]{chww-monoid}).
This notion is parallel to its counterpart in algebraic geometry.

\begin{defn}\label{def:Mpctf}
A monoid scheme $X$ is {\em pc} (respectively, {\em pctf})
if for each $x$ in $X$ the stalk monoid $\cA_x$ is pc (respectively, pctf)
in the sense of Definition \ref{def:pc}.
If $X=\MSpec(A)$ then $X$ is pc if and only if $A$ is pc
by the proof of \cite[9.1]{chww-monoid}.
We write $\cMpc$ for the category consisting of separated
pc monoid schemes of finite type, and $\cMpctf$ for
the full subcategory of separated pctf monoid schemes of finite type.
\end{defn}

\begin{subex}\label{toric+pctf}
Recall from \cite[4.1]{chww-monoid} that a {\it toric monoid scheme} is a
separated, connected, torsionfree, normal monoid scheme of finite type.
Thus toric monoid schemes are in $\cMpctf$.

In fact, a toric monoid scheme is the same thing as a
connected normal scheme in $\cMpctf$, since the stalks $\cA_x$
of a normal pctf scheme are torsionfree by Proposition \ref{prop:pc}(2).
We showed in \cite[4.4]{chww-monoid} that (up to isomorphism) any
toric monoid scheme $X$ uniquely determines a fan $(N,\Delta)$
and that $X$ may be constructed from this fan.
\end{subex}

In this paper we shall also be interested in {\it seminormal} monoid schemes,
by which we mean monoid schemes whose stalk monoids $\cA_x$ are
seminormal. Lemma \ref{lem:snloc1} implies that an affine scheme
$X=\MSpec(A)$ is seminormal if and only if $A$ is seminormal.
($A$ is the stalk at the unique closed point of $X$.)
Lemma \ref{sn:homeo} implies that $\cA_\sn$ is a sheaf of monoids
on $X=\MSpec(A)$, and that $\MSpec(A_\sn)$ is isomorphic to
the monoid scheme $(\MSpec(A),\cA_\sn)$. It follows that for any
pc monoid scheme $(X,\cA)$ there is a sheaf of monoids $\cA_\sn$
and we define the seminormalization of $X$ to be $(X,\cA_\sn)$.
The sheaf map $\cA\to\cA_\sn$ induces a natural map $X_\sn \to X$.

\begin{prop}
If $X =(X,A)$ is a pc monoid scheme,
then $X_\sn := (X,\cA_\sn)$ is also a pc monoid scheme, and
every map from a seminormal monoid scheme to $X$
factors uniquely through $X_\sn\to X$.
\end{prop}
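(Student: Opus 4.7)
The plan is to reduce the claim to the affine setting and invoke the machinery for seminormalization of pc monoids already developed. The argument has two parts: first, verifying that $X_\sn$ is a pc monoid scheme; second, establishing the universal property.

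For the first part, I would take an affine open cover $\{U_i\}$ of $X$ with $U_i = \MSpec(A_i)$ and each $A_i$ a pc monoid. Proposition \ref{prop:snfunctor} guarantees each $(A_i)_\sn$ exists and is pc. I then want to identify the restriction of $(X, \cA_\sn)$ to $U_i$ with $\MSpec((A_i)_\sn)$. The underlying topological spaces agree by Lemma \ref{sn:homeo}, and the sheaves of monoids agree on stalks via Lemma \ref{lem:snloc2} (seminormalization commutes with localization): for $\fp \in U_i$ the stalk of $\cA_\sn$ at $\fp$ is $((A_i)_\fp)_\sn \cong ((A_i)_\sn)_{\tilde{\fp}}$. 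This produces an affine pc cover of $X_\sn$, so $X_\sn$ is a pc monoid scheme.

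For the universal property, given $g: Z \to X$ with $Z$ seminormal, the factorization must have the same underlying continuous map as $g$ (since $X_\sn \to X$ is the identity on the underlying space). To produce the sheaf map, I would choose affine open covers $V_j = \MSpec(B_j)$ of $Z$ and $U_i = \MSpec(A_i)$ of $X$ compatible with $g$, so that $g$ is given locally by monoid maps $A_i \to B_j$. A short check shows each $B_j$ is itself seminormal: the equation $x^3 = y^2$ in $B_j$ produces, at each stalk, a unique $z_\fp$ with $z_\fp^2 = x$, $z_\fp^3 = y$, and uniqueness forces these stalk elements to come from a global section. Lemma \ref{lem:ump} then gives unique factorizations $A_i \to (A_i)_\sn \to B_j$, yielding morphisms $V_j \to \MSpec((A_i)_\sn) \cong (U_i)_\sn \subseteq X_\sn$.

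The main obstacle will be checking that these affine-local maps glue into a global morphism $Z \to X_\sn$, and that the resulting factorization is unique. The uniqueness clause of Lemma \ref{lem:ump} is the essential tool: on any overlap of the chosen covers, two candidate factorizations with a common composition into $\cB$ must coincide, so the local lifts are automatically compatible where they meet, and the same rigidity yields global uniqueness of the factorization $Z \to X_\sn$.
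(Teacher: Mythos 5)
Your proposal is correct and follows essentially the same route as the paper: the paper's proof simply observes that both the pc property and the universal mapping property are local in $X$, reduces to the affine case, and cites Proposition \ref{prop:snfunctor} and Lemma \ref{lem:ump}, which is exactly the skeleton you flesh out. One small simplification you could make: the observation that each $B_j$ is seminormal is immediate, since for an affine monoid scheme $\MSpec(B_j)$ the monoid $B_j$ is itself the stalk at the unique closed point (this is noted in the paper right after the definition of seminormal monoid scheme), so no gluing-of-$z_\fp$'s argument is needed there.
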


\begin{proof}
Since being pc and the universal mapping property are local in $X$,
we may assume that $X$ is affine. In this case, the assertions
follow from Proposition \ref{prop:snfunctor} and Lemma \ref{lem:ump}.
\end{proof}

Let $p: Y \to X$ be a morphism between monoid schemes of finite type.
We say that $p$ is {\it finite} if $X$ can be covered by affine open subschemes
$U$ such that $p^{-1}(U)\to U$ is isomorphic to $\MSpec(B)\to\MSpec(A)$
for some finite monoid map $A\to B$. (See \cite[6.2]{chww-monoid}.)
We say that $p$ is {\em birational} if there exists an open, dense
subset $U$ of $X$ such that $p^{-1}(U)$ is dense in $Y$ and the induced map
$p^{-1}(U) \to U$ is an isomorphism. (See \cite[10.1]{chww-monoid}.)

\begin{subrem}
If the conductor contains a nonzerodivisor $s\in B$, then
$A\subset B$ is birational (as $ A[1/s]=B[1/s]$).
Conversely, if $\nil(B)=0$ and $A\subset B$ is finite birational
then the conductor ideal contains a nonzerodivisor of both $A$ and $B$.
This is because if $\MSpec(B[1/s])$ is dense in $\MSpec(B)$ then
$s$ is a nonzerodivisor, and
if $B=\cup Ab_i$ and $b_i=a_i/s$ then $Bs\subset A$.
\end{subrem}

A monoid scheme is {\it reduced} if its stalks are reduced
monoids (see \cite[Sec.\,2]{chww-monoid}).

\begin{lem} \label{lem:snfinite}
If $X$ belongs to $\cMpctf$ (resp. $\cMpc$) then so does $X_\sn$
and the map $X_\sn \to X$ is a finite morphism.
If in addition $X$ is reduced, the map $X_\sn \to X$ is birational.
\end{lem}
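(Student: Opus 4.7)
My plan is to focus on the assertions beyond what the previous Proposition already gives (which establishes $X_\sn$ as a pc monoid scheme), and to verify them locally on affine opens $\MSpec(A_i) \subseteq X$ with $A_i$ finitely generated and pc (resp.\ pctf). Proposition~\ref{prop:snfunctor} shows each $(A_i)_\sn$ is pc (resp.\ pctf), and Lemma~\ref{Asn-finite} gives finiteness of $A_i \to (A_i)_\sn$. This simultaneously shows that $X_\sn \to X$ is a finite morphism and that the stalks of $\cA_\sn$ are finitely generated; combined with Lemma~\ref{sn:homeo}, which identifies the underlying topological space of $X_\sn$ with that of $X$, we conclude $X_\sn$ is of finite type. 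For separatedness, I would factor the diagonal $X_\sn \to X_\sn \times X_\sn$ through the fiber product $X_\sn \times_X X_\sn$; the arrow $X_\sn \times_X X_\sn \to X_\sn \times X_\sn$ is the base change of the closed immersion $X \to X \times X$, while the first arrow is a closed immersion since $X_\sn \to X$ is affine.

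The substantive step is birationality when $X$ is reduced. On an affine open $U = \MSpec(A) \subseteq X$ the monoid $A$ is reduced (by a short prime-avoidance argument, since stalks of $\cA$ are reduced), so the map $A \hookrightarrow A_\sn$ is injective by the proof of Lemma~\ref{lem:newsn}. I would introduce the conductor ideal $K := \{a \in A : a \cdot A_\sn \subseteq A\}$. For any $s \in K$ one has $A[1/s] = A_\sn[1/s]$, so $X_\sn \to X$ restricts to an isomorphism over $U \setminus V(K)$, and birationality reduces to showing that $K \not\subseteq \fp$ for every minimal prime $\fp$ of $A$.

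For such a $\fp$, the key observation is that $A_\fp$ is again reduced, so its unique prime $\fp A_\fp = \nil(A_\fp)$ vanishes, making $A_\fp$ a pointed abelian group. Then $A_\fp$ is seminormal by Example~\ref{group:sn}, and Lemma~\ref{lem:snloc2} yields $(A_\sn)_{\tilde\fp} \cong (A_\fp)_\sn = A_\fp$. Writing $A_\sn$ as the finite union $A \cup A x_1 \cup \cdots \cup A x_n$ from Lemma~\ref{Asn-finite}, this identification lets one produce, for each $x_i$, an element $s_i \in A \setminus \fp$ with $s_i x_i \in A$: unwind a relation $u s x_i = u a$ in $A_\sn$ with $u \in A_\sn \setminus \tilde\fp$, and replace $u$ by a sufficiently large power which lies in $A$ (using the defining property of $A_\sn$). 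The product $s = \prod s_i$ then lies in $K \setminus \fp$.

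The hard part is establishing that $A_\fp$ is reduced, which is not automatic for localizations of reduced pc monoids. I plan to write $A = C/I$ with $C$ cancellative and $I$ radical, let $\fp'$ be the minimal prime of $C$ over $I$ corresponding to $\fp$, and for each $a \in \fp$ produce $u \in A \setminus \fp$ with $au = 0$. Such a $u$ arises by lifting to the intersection of the remaining minimal primes of $C$ over $I$, which is not contained in $\fp'$ by primality and minimality; then $a'u'$ lies in every minimal prime over $I$, hence in $I = \sqrt{I}$, which gives $\fp A_\fp = 0$ and completes the argument.
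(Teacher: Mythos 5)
Your proof is correct, and it handles the finiteness and finite-type claims essentially as the paper does (the paper calls them "immediate" from Lemma~\ref{Asn-finite} and Proposition~\ref{prop:snfunctor}; your unpacking of this, including the separatedness factorization through $X_\sn\times_X X_\sn$, is the standard argument the authors leave unwritten). The genuinely different part is the birationality. The paper cites a criterion from the companion paper \cite[10.1]{chww-monoid} that reduces birationality of a finite homeomorphism to checking isomorphism on stalks at generic points, and then notes that for reduced pc $X$ such a stalk is a pointed abelian group and hence already seminormal. You instead verify the \emph{definition} of birational directly: you build the conductor ideal $K$ of $A\subset A_\sn$, observe that $A[1/s]=A_\sn[1/s]$ for $s\in K$, and prove $K\not\subseteq\fp$ for every minimal prime $\fp$, which makes the complement of $V(K)$ a dense open on which $X_\sn\to X$ restricts to an isomorphism. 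Your sub-arguments go through: writing $A=C/I$ with $I$ radical, the finitely many minimal primes of $C$ over $I$ let you annihilate any $a\in\fp$ by a $u\notin\fp$, giving $\fp A_\fp=0$, so $A_\fp$ is a pointed abelian group, hence seminormal; combining $(A_\sn)_{\tilde\fp}\cong(A_\fp)_\sn=A_\fp$ from Lemma~\ref{lem:snloc2} with the finiteness $A_\sn=\bigcup Ax_i$ produces elements $s_i\in A\setminus\fp$ with $s_ix_i\in A$, so $\prod s_i\in K\setminus\fp$. Both routes ultimately rest on the same observation (generic stalks of a reduced pc scheme are pointed groups); yours is more self-contained because it avoids the companion-paper birationality criterion, at the cost of the extra conductor bookkeeping. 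Two small remarks: you do not actually need prime avoidance to see that $A$ is reduced, since $A$ equals the stalk of $\cA$ at the unique closed point of $\MSpec(A)$; and when you globalize the affine statement to $X$, you are implicitly using that the conductor ideals patch to a sheaf of ideals, which follows from the compatibility of conductors with localization discussed before Example~\ref{ex:sn}.
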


\begin{proof}
The first assertion is immediate from Lemma \ref{Asn-finite}
and Proposition \ref{prop:snfunctor}. 

Assume now that $X$ is reduced.
The map $X_\sn \to X$ is a homeomorphism by Lemma \ref{prop:red}.
By \cite[10.1]{chww-monoid} it suffices to show this map induces an
isomorphism on stalks at generic points. Since $X$ is reduced, the
stalk $A$ at a generic point is the pointed monoid associated to an
abelian group, and the map on generic stalks induced by
$X_\sn \to X$ has the form $A \to A_\sn$, which is
an isomorphism as $A$ is seminormal.
\end{proof}

\begin{lem} \label{lem:snscheme}
Let $X$ be a pc monoid scheme with seminormalization $X_\sn \to X$, and
let $Z \subset X$ an equivariant
closed subscheme of $X$. Set $Y := Z\times_X X_\sn$.
Then $Y_\red \to Z$ is the seminormalization of $Z$.
\end{lem}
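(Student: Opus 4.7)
The plan is to reduce to the affine case and then invoke Lemma \ref{lem:newsn}. Both the formation of the seminormalization and the statement that ``$Y_\red \to Z$ is the seminormalization of $Z$'' are local on $X$: compatibility of $(-)_\sn$ with localization is Lemma \ref{lem:snloc2}, and the universal mapping property that characterizes the seminormalization (Lemma \ref{lem:ump}) can be verified one affine chart at a time. So I may assume $X = \MSpec(A)$ with $A$ a pc monoid, and that $Z = \MSpec(A/I)$ for some ideal $I$ of $A$; then $X_\sn = \MSpec(A_\sn)$ by the definition of $\cA_\sn$.

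Next I would identify $Y$ and $Y_\red$ concretely. The fiber product of affine monoid schemes corresponds to the pushout of monoids, so
\[
Y = Z\times_X X_\sn = \MSpec\bigl(A_\sn/(IA_\sn)\bigr),
\]
where $IA_\sn$ denotes the ideal of $A_\sn$ generated by the image of $I$. By Proposition \ref{prop:snfunctor}, $A_\sn$ is pc, and quotients of pc monoids by ideals are again pc; therefore $B := A_\sn/IA_\sn$ is a pc monoid. Applying Proposition \ref{prop:red} to $B$ gives
\[
Y_\red = \MSpec\bigl(B/\nil(B)\bigr) = \MSpec\bigl(A_\sn/\sqrt{IA_\sn}\bigr).
\]

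Finally, Lemma \ref{lem:newsn} applied to the pc monoid $A$ and the ideal $I$ asserts precisely that the canonical map $A/I \to A_\sn/\sqrt{IA_\sn}$ is the seminormalization of $A/I$. Translated geometrically, this says that $Y_\red \to Z$ is the seminormalization of $Z$, which is the desired conclusion.

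The only genuinely nontrivial ingredient is Lemma \ref{lem:newsn}, which is already established; the main thing to be careful about here is the identification $(A_\sn/IA_\sn)_\red = A_\sn/\sqrt{IA_\sn}$, and this relies on $A_\sn/IA_\sn$ being a pc monoid so that Proposition \ref{prop:red} applies (for a general monoid the nilradical need not equal the radical of zero, as noted in the remark after Proposition \ref{prop:red}). Once that identification is in hand, the proof is essentially bookkeeping that translates the affine statement into sheaf-theoretic language.
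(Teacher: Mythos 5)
Your proof is correct and follows the same route as the paper: reduce to the affine case, identify $Y=\MSpec(A_\sn/IA_\sn)$, and apply Lemma \ref{lem:newsn}. Your explicit justification of $Y_\red=\MSpec\bigl(A_\sn/\sqrt{IA_\sn}\bigr)$ via Proposition \ref{prop:red} merely spells out a step the paper leaves implicit.
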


\begin{proof}
This is a local question so we may assume that $X=\MSpec(A)$,
$Z=\MSpec(A/I)$ and $Y=\MSpec(A_\sn/J,)$, $J=IA_\sn$. But
$Z_\sn=\MSpec(A_\sn/\sqrt{J})$ by Lemma \ref{lem:newsn}. The result follows.
\end{proof}

\section{The Cyclic Bar Construction}\label{sec:cycbar}

\subsection{The cyclic bar construction for monoids}
Let $A$ be a (pointed abelian) monoid. A two-sided pointed $A$-set is
a pointed set $B$ equipped with two pairings $A\smsh B\to B$ and
$B\smsh A\to B$ such that $1b = b = b1$, $a_1(a_2b) = (a_1a_2)b$,
$(ba_1)a_2 = b(a_1a_2)$ and $(a_1b)a_2 = a_1(ba_2)$ hold for all
$a_1,a_2 \in A$, $b \in B$. The 2-sided bar construction
$\Ncy(B,A)$ is the pointed simplicial set whose set of $n$-simplices is
\[
\Ncy(B,A)_n=B\smsh \overbrace{A\smsh\cdots\smsh A}^{n\text{ factors} }
\]
and whose face and degeneracy maps are
just like those used to define the Hochschild complex of a ring.

The multiplication action of $A$ on itself makes it a 2-sided pointed
$A$-set, and we write $\Ncy(A)$ instead of $\Ncy(A,A)$.
The operations
$$
t_n(a_0,\dots, a_n) = (a_1, \dots, a_n, a_0),
$$
make $\Ncy(A,A)$ into a pointed cyclic set (see \cite[9.6.2]{WH} and
the next section for more details).
Following \BHM \cite{BHM}, we call $\Ncy(A)$ the {\it cyclic nerve} of $A$.

For a pointed set $X$ and a ring $R$, we let $R[X]$ denote the free
$R$-module on $X$, modulo the summand indexed by the base point of $X$.
If $A$ is a pointed monoid, the free $R$-module $R[A]$ is a ring in
the usual way, with multiplication given by the product rule for $A$.
For any commutative ring $R$, the chain complex associated to the
simplicial free $R$-module $R[\Ncy(B,A)]$ is the usual Hochschild complex
relative to the base ring $R$ of the monoid-ring $R[A]$ with coefficients in
the $R[A]$-bimodule $R[B]$.  In particular, the homology of
$|\Ncy(B,A)|$ with $R$ coefficients is the Hochschild homology
relative to $R$ of $R[A]$ with coefficients in the $R[A]$-bimodule $R[B]$:
\begin{equation}\label{eq:H(N)=HH}
H_q(|\Ncy(B,A)|, R) = HH_q(R[A]/R, R[B]).
\end{equation}
\goodbreak

\begin{defn}\label{def:Ncy(A,a)}
Let $B$ be a two-sided pointed $A$-set for which the two
actions coincide (i.e., $ab = ba$ for all $a \in A, b \in B$).
For each  $b\in B$, we define $\Ncy(B,A;b)$ to be
the pointed simplicial subset of $\Ncy(B,A)$ consisting of those
$n$-simplices $(b_0,a_1, \dots, a_n)$ satisfying $b_0\cdot\prod_i a_i=b$,
together with the base-point $(*,0,\dots,0)$.
Thus $\Ncy(B,A)$ decomposes as a wedge of pointed simplicial sets
$$
\Ncy(B,A) = \bigvee_{b\in B} \Ncy(B,A;b).
$$
In particular, when $B=A$ we write $\Ncy(A,a) = \Ncy(A,A;a)$
and have $\Ncy(A)=\bigvee_{a \in A} \Ncy(A,a)$.
Each subset $\Ncy(A,a)$ is invariant under the operators $t_n$ on $\Ncy(A)$,
so each $\Ncy(A,a)$ is a cyclic subset of $\Ncy(A)$.
For example, if $A$ has no zerodivisors then each
$$
\Ncy(A,0)_n = \{(a_0, \dots, a_n)\} \, | \, \prod_{i=0}^n a_i = 0\}
$$
is a point for all $n$, i.e., $\Ncy(A,0)$ is a point,
regarded as a constant simplicial set.
\end{defn}

\begin{subrem}\label{Ncy-modI}
If $I$ is an ideal and $a\not\in I$, then $\Ncy(A,a)\smap{\cong}\Ncy(A/I,a)$.
This is immediate from the observation that if $(x_0,\dots,x_n)$ is
an $n$-simplex of $\Ncy(A,a)$ and it is not the base point, then
$x_0\cdots x_n = a\notin I$ and hence none of $x_0,\dots,x_n$ can be in $I$.
\end{subrem}

Observe that if $A = \smsh_{i \in I} A^i$ is a smash product of pointed
monoids, then
\begin{equation} \label{E310}
\Ncy(A) \cong \Smsh_{i\in I} \Ncy(A^i).
\end{equation}
In particular if $\{B^i\}_{i\in I}$ is a family of unpointed monoids, then
$\Smsh(B^i_*)=(\prod B^i)_*$ and
\begin{equation}\label{E311}
\Ncy(\prod_i B^i)_*=\Smsh_{i\in I}\Ncy(B^i_*)=(\prod_{i\in I}\Ncy B^i_*)
\end{equation}

\subsection{Cyclic sets and $\S$-spaces}
We briefly review the basic theory of $\S$-spaces and cyclic sets.
We refer the reader to \cite{BHM}, \cite[7.1]{Loday} and \cite[9.6]{WH}
for additional information.

Let $\S$ denote the Lie group of complex numbers of norm $1$.
We will often identify $\S$ with the group $\R/\Z$
via the homeomorphism induced by sending $\theta \in \R$ to
$e^{2\pi\theta i}$ in $\S$.
The only closed  subgroups of $\S$ are $\S$ itself
and the cyclic group $C_r$ of $r$-th roots of unity for each $r \geq 1$.

An {\em $\S$-space} is a topological space equipped with a continuous
action of $\S$.  An equivariant map of $\S$-spaces $f: X \to Y$ is an
{\em $\S$-weak equivalence} if $f: X^H \to Y^H$ is a weak equivalence
in the usual sense for every closed subgroup $H$ of $\S$ (i.e. for $H
= \S$ and $H = C_r, r\ge1$).  Let $I$ denote the unit interval with
trivial $\S$-action.  Two equivariant maps $f_0,f_1 : X \to Y$ are
{\em $\S$-homotopic} if there is an equivariant map
$h: X \times I \to Y$ with $f_i = h|_{X \times \{i\}}$ for $i=0,1$.
A map $f: X \to Y$ is an {\em $\S$-homotopy equivalence} if it admits
an inverse up to $\S$-homotopy.

All of the above definitions have evident pointed variants.

A  pointed $\S$-CW complex is a pointed $\S$-space  built from pointed
$\S$-cells of the form $(\S/H)_+ \smsh D^n$ with $H$ as above.
The following analogue
of the usual Whitehead theorem is useful; see \cite[I.1]{LMS}.

\begin{EWT}\label{EWT}
A morphism between $\S$-CW complexes is an $\S$-homotopy equivalence
if and only if it is an $\S$-weak equivalence.
\end{EWT}

Let $\Lambda$ denote the {\em cyclic category} with objects
$[0], [1], [2], \dots$
and whose morphisms are generated by the face and degeneracy maps as in the
simplicial category along with an automorphism $\tau_n: [n] \to [n]$
of order $n+1$ for each $n \geq 0$. See, e.g., \cite[6.1.1]{Loday}
or \cite[9.6.1]{WH} for a complete list of relations.
A {\em  cyclic set} is a contravariant functor from $\Lambda$ to the
category of sets. Since $\Lambda$ contains the simplicial category as
a subcategory, there is a forgetful functor from cyclic sets to
simplicial sets.

If $X_\mathdot$ is a cyclic set, then the geometric realization of
its underlying simplicial set is an $\S$-space (see \cite{DHK});
in fact,
$|X_\mathdot|$ is an $\S$-CW complex by \cite[Theorem 1]{FiedorowiczGajda}.

The representable functor $\Hom_\Lambda(-,[n])$ on
the cyclic category $\Lambda$ is a cyclic set, and hence a simplicial
set,  and there is a homeomorphism
\begin{equation} \label{|Lambda|}
|\Hom_{\Lambda}(-,[n])| \map{\cong} \Lambda^n := \S \times \Delta^n,
\end{equation}
where $\Delta^n := \{(u_0, \dots, u_n) \, | \, u_i \in \R, u_i \geq 0,
\sum_i u_i = 1\}$ is the standard topological $n$-simplex.
(See \cite[1.6]{BHM}.)
The collection of spaces $\Lambda^n, n\ge0$ is a cocyclic $\S$-space ---
i.e., a covariant functor from $\Lambda$ to the category of $\S$-space.
We choose the homeomorphism \eqref{|Lambda|} as in \cite[7.2]{HM},
so that for the automorphism $\tau_n: [n] \to [n]$, we have
\begin{equation} \label{craction}
(\tau_n)_*(z; u_0, \dots, u_n) =(z e^{\frac{2\pi i}{n+1}};
u_1, \dots, u_n, u_0).
\end{equation}
Note that $\Lambda^n$ is an $\S \times C_{[n]}$-space, where
$C_{[n]} = C_{n+1}$ is the group of cyclic permutations of $[n]$.
Explicitly, $\S$ acts (only) on the first component by translation,
and the cyclic group acts (only) in the second component
by permuting the vertices.

For a general cyclic set $X_\mathdot$,
we use the $\S$-spaces $\Lambda^n$ to construct the coend
$$
\int_{n \in \Lambda} X_n \times \Lambda^n
  := \coprod_n X_n \times \Lambda^n/\sim
$$
where $\sim$ is the equivalence relation generated by $(f^*x, t) \sim
(x, f_*t)$ for $f$ a morphism in $\Lambda$.
As proven in \cite[Theorem 1]{FiedorowiczGajda}, this coend is
canonically homeomorphic to the geometric realization
of the simplicial
set underlying $X_\mathdot$. We are thus justified in defining the geometric
realization of a cyclic set $X_\mathdot$ as
$$
|X_\mathdot| := \int_{n \in \Lambda} X_n \times \Lambda^n.
$$
Thus, $|X_\mathdot|$ is an $\S$-space with the action of $\S$
determined by the action on the $\Lambda^n$ of \eqref{|Lambda|}.

\begin{ex}\label{faithful-action}
For any cyclic set $X_\mathdot$, the subspace $|X_\mathdot|^\S\subset |X_\mathdot|$
is the discrete set of points given by the equalizer of the two maps
$s_0, t_1 \circ s_0: X_0 \rightrightarrows X_1$
(see \cite[page ~145]{Dunn}).  If $A$ is a monoid, then
$s_0, t_1 \circ s_0: A =\Ncy(A)_0 \to \Ncy(A)_1 = A \smsh A$ are
given by $a \mapsto a \smsh 1$ and $a \mapsto 1 \smsh a$,
and thus we have $|\Ncy(A)|^\S = \{0,1\} = S^0$ for any $A \ne 0$,
with $|\Ncy(A,a)|^\S=\{0\}$ for $a\ne1$.
In particular, the map between the cyclic nerves of non-zero monoids
induced by a monoid homomorphism is an $\S$-homotopy equivalence
if and only if it is a weak equivalence on $C_r$-fixed points
for each $r\ge 1$.
\end{ex}

\begin{ex} \label{Ex310}
Suppose $M$ is an abelian group, written additively
for convenience, and let $\Ncyz(M)$ denote the unpointed version of
the cyclic bar construction, so that an $n$-simplex of $\Ncyz(M)$ i
s an $m+1$-tuple $(m_0, \dots, m_n)$ of elements of $M$.
Note that $\Ncy(M_*) = \Ncyz(M)_*$ and we have an evident decomposition
$$
\Ncyz(M) = \coprod_{m \in M} \Ncyz(M,m).
$$

For each $m \in M$, the simplicial set $\Ncyz(M,m)$ is isomorphic
to $BM$, the standard simplicial classifying space of $M$.
The isomorphism sends $(m_0,\dots, m_n)$ to $(m_1,\dots, m_n)$ and its
inverse sends $(m_1,\dots, m_n)$ to $(m-m_1-\cdots-m_n,m_1,\dots,m_n)$.
For each $m\in M$, this isomorphism induces a cyclic structure on $BM$ given by
$$
\tau_n(m_1, \dots, m_n) = (m_2, \dots, m_n, m-m_1 - \cdots -m_n).
$$
This cyclic structure on $BM$ depends on $m$ of course, and we will describe the fixed point sets of the geometric
realization for finite subgroups of $\S$ in what follows.

There is a natural map from $|\Ncyz(M)|$ to the free loop space
$\Lambda|BM|$, adjoint to the composition of the $\S$-action
$\S\times|\Ncyz(M)|\to|\Ncyz(M)|$ with the geometric realization of the
isomorphism $\Ncyz(M)\to BM$ described above.
It is proved in \cite[Prop. 2.6]{BHM} that this map induces a homotopy
equivalence $|\Ncyz(M)|^{C_r} \to (\Lambda|BM|)^{C_r}$ for all $r\geq 1$.

Since $M$ is abelian, there is a homotopy equivalence
$\Lambda|BM|\to|BM|\times M$; the projection onto $|BM|$ is given by
evaluation at $1$, and the projection to $M=\pi_0(\Omega |BM|)$ is obtained by
deforming a loop in $|BM|$ to a loop based at the base point
and then taking its homotopy class (this is well-defined since $M$ is
abelian). It is easy to check that this map is actually an
$\S$-homotopy equivalence for the action on $|BM|\times M$ that is, on
the component $|BM|\times\{m\}$, induced by the composite map
$\S\times |BM| \overset\cong\to |B(\Z\times M)|\to |BM|$, where
$B(\Z\times M)\to BM$ is given by applying $B$ to the homomorphism
$(n,x)\mapsto n\cdot m + x$.

Under the composite map $|\Ncyz(M)| \to \Lambda |BM| \to |BM|\times M$, the component $|\Ncyz(M,m)|$ is mapped to the component $|BM|\times\{m\}$ and it follows from \cite[Proposition 2.6.]{BHM} that the induced map $|\Ncyz(M,m)|^{C_r} \to (|BM|\times\{m\})^{C_r}$ (with action as described above) is a homotopy equivalence for all $r\geq 1$.

Now consider the case where $M$ is a free abelian group of rank $n$
and let $e\in M$. It follows from the preceding discussion that
$|\Ncy(M_*,e)|^{C_r}$ is homotopy equivalent to an $n$-dimensional torus
with disjoint basepoint if $e$ is divisible by $r$ (i.e.,
if $e=r\cdot e'$ for some $e'\in M$), and to a one-point space otherwise.
\end{ex}

\begin{subex}\label{Ncy+units}
More generally, for any (pointed) monoid $A$ and any unit $a \in A$, we have
$$
\Ncy(A,a) = \Ncy(U(A)_*,a) \cong B(U(A))_*,
$$
where $U(A)$ is the group of units in $A$.
This follows from Remark \ref{Ncy-modI}.
\end{subex}

For each $r>0$, the canonical group homomorphism
$$
\rho_r:\S \onto \S/C_r \cong \S, \, z \mapsto z^r,
$$
allows us to view an $\S$-space $X$ as an $\S$-space $\rho_r^*X$ on
which $z$ acts as $z^r$. In particular, $C_r$ acts trivially on $\rho_r^*X$.

\begin{subex}\label{Ncy(x)}
Consider the free monoid $\langle x\rangle$ on one generator.
By \ref{def:Ncy(A,a)}, $\Ncy(\langle x\rangle)$ decomposes as
$\vee_{r\ge0} \Ncy(\langle x\rangle,x^r)$. By inspection, we have
$\Ncy(\langle x\rangle,1)=S^0$ and $\Ncy(\langle x\rangle,x)=S^1_*$.
For $r\ge1$, we have an $\S$-equivariant homeomorphism
$$
|\Ncy(\langle x\rangle,x^r)| \map{\sim} (\S\times_{C_r}\Delta^{r-1})_*,
$$
and the canonical map
$|\Ncy(\langle x\rangle,x^r)|\to|\Ncy(\langle x^{\pm1}\rangle,x^r)|$
is an $\S$-homotopy equivalence (see \cite[3.20--21]{Rognes} for example).
That is, $|\Ncy(\langle x\rangle,x^r)| \map{\sim}(\S/C_r)_*=\rho_r^*\S$.
 In particular, we have an $\S$-equivariant deformation retraction
\[
|\Ncy(\langle x\rangle)|\map{\sim}
S^0\vee \bigvee\nolimits_{r\ge1}(\S/C_r)_*.
\]
By convention, we set $C_0=\S$, so that
$|\Ncy(\langle x\rangle)|\map{\sim}\bigvee_{r\ge0}(\S/C_r)_*$.
\end{subex}

We will need to understand the $\S$-homotopy type of the cyclic nerve
of the free abelian pointed monoid $F_r := \langle x_1,\dots,x_r\rangle$.
Using \eqref{E310} and \eqref{E311} it suffices to consider the case $r=1$,
which is handled in Example \ref{Ncy(x)}. We record the result:

\begin{lem} \label{Ncy(free)}
For integers $e_1,\dots,e_r \ge0$, there is an $\S$-homotopy equivalence
$$
(\S/C_{e_1} \times \cdots \times\S/C_{e_r})_*
\map{\sim}
|\Ncy(F_r, x_1^{e_1} \cdots x_r^{e_r})|.
$$
Moreover, the map
$$
|\Ncy(F_r, x_1^{e_1} \cdots x_r^{e_r})|
\into
|\Ncy(\ip{x_1^{\pm1}, \dots, x_r^{\pm1}}, x_1^{e_1} \cdots x_r^{e_r})|
$$
is an $\S$-homotopy equivalence.
\end{lem}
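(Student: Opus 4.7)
The plan is to reduce everything to the one-variable case (Example~\ref{Ncy(x)}) by exploiting the monoid decomposition $F_r = \langle x_1\rangle \smsh \cdots \smsh \langle x_r\rangle$ and the compatibility of the cyclic bar construction with smash products.

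First, I would apply \eqref{E310} to obtain an isomorphism of cyclic pointed sets $\Ncy(F_r) \cong \Smsh_{i=1}^r \Ncy(\langle x_i\rangle)$, with diagonal cyclic operator (immediate from the formula $t_n(a_0,\ldots,a_n) = (a_1,\ldots,a_n,a_0)$). By Definition~\ref{def:Ncy(A,a)}, each factor decomposes as the wedge $\bigvee_{e_i\ge 0} \Ncy(\langle x_i\rangle, x_i^{e_i})$ of cyclic pointed subsets; distributing the smash over the wedges and picking out the component indexed by $(e_1,\ldots,e_r)$ yields a cyclic isomorphism
\[
\Ncy(F_r, x_1^{e_1}\cdots x_r^{e_r}) \cong \Smsh_{i=1}^r \Ncy(\langle x_i\rangle, x_i^{e_i}).
\]
Because the cyclic operator is diagonal, geometric realization carries this to an $\S$-equivariant homeomorphism of pointed $\S$-spaces, where the right-hand side carries the diagonal $\S$-action.

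Second, I would invoke Example~\ref{Ncy(x)}, which (with the convention $C_0 = \S$) provides an $\S$-homotopy equivalence $|\Ncy(\langle x_i\rangle, x_i^{e_i})| \simeq (\S/C_{e_i})_*$ for every $i$. Since $(Y_1)_* \smsh (Y_2)_* \cong (Y_1 \times Y_2)_*$ for pointed spaces of the form $Y_*$, smashing these equivalences together produces
\[
(\S/C_{e_1}\times\cdots\times\S/C_{e_r})_* \map{\sim} |\Ncy(F_r, x_1^{e_1}\cdots x_r^{e_r})|,
\]
establishing the first claim. For the second claim I would repeat the argument with $\ip{x_1^{\pm 1},\ldots,x_r^{\pm 1}} \cong (\prod_i \Z)_*$, using \eqref{E311} in place of \eqref{E310}, and then smash together the $\S$-homotopy equivalences $|\Ncy(\langle x_i\rangle, x_i^{e_i})| \to |\Ncy(\ip{x_i^{\pm 1}}, x_i^{e_i})|$ furnished by the last part of Example~\ref{Ncy(x)}; the inclusion of monoids $F_r \into \ip{x_1^{\pm 1},\ldots,x_r^{\pm 1}}$ corresponds to the smash of the inclusions $\langle x_i\rangle \into \ip{x_i^{\pm 1}}$, so the resulting map matches the one in the statement.

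The main subtlety, and the step that requires care, is the $\S$-equivariance of the compatibility between realization and smash products of cyclic pointed sets. This follows from the coend formula $|X_\mathdot| = \int^{n\in\Lambda} X_n \times \Lambda^n$ together with the observation that the diagonal inclusion $\Lambda^n \to \Lambda^n\times\Lambda^n$ of cocyclic $\S$-spaces is $\S$-equivariant for the diagonal action on the target; equivalently, the diagonal cyclic operator on $\Smsh_i \Ncy(\langle x_i\rangle)$ realizes to the diagonal $\S$-action on $\Smsh_i |\Ncy(\langle x_i\rangle)|$. The remaining auxiliary fact --- that the smash product of $\S$-homotopy equivalences between pointed $\S$-CW complexes is again an $\S$-homotopy equivalence --- follows from the Equivariant Whitehead Theorem \ref{EWT} together with the fact that the smash product of pointed $\S$-CW complexes inherits a natural $\S$-CW structure.
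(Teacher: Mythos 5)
Your proposal is correct and takes essentially the same route as the paper, which disposes of the lemma in one line by using \eqref{E310} and \eqref{E311} to reduce to the case $r=1$ handled in Example \ref{Ncy(x)}; you have simply written out the wedge/smash bookkeeping and the $\S$-equivariance of $|X\smsh Y|\cong |X|\smsh|Y|$ that this reduction tacitly uses. (One caveat you inherit from the statement and from the paper's own appeal to Example \ref{Ncy(x)}: that example supplies the equivalence $|\Ncy(\langle x_i\rangle,x_i^{e_i})|\to|\Ncy(\ip{x_i^{\pm1}},x_i^{e_i})|$ only for $e_i\ge1$, and indeed when $e_i=0$ the target factor $|\Ncy(\ip{x_i^{\pm1}},1)|$ is a circle rather than $S^0$, so the ``moreover'' half really concerns the case all $e_i\ge1$ --- a slip in the statement, not in your argument.)
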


The space $(\S/C_{e_1} \times \cdots \S/C_{e_r})_*$
in Lemma \ref{Ncy(free)} is a pointed torus,
whose dimension is the number of nonzero $e_i$.
If $e_i>0$ for all $i$, it is the space
$(\S \times \cdots \times \S)_*$ equipped with the $\S$-action given as
$$
z \cdot (w_1, \dots, w_r) = (z^{e_1}w_1, \dots, z^{e_r}w_r).
$$

\begin{ex}
Let $A_*$ be the pointed monoid associated to the integer lattice points
in the cone in $\R^2$ spanned by the vectors $v=(1,0)$ and $w=(1,3)$,
and set $x= (1,1)$, $y=(1,2)$; these are irreducible elements in $A$.
The element $a=(2,3)$ satisfies $a = x + y = v + w$.
Let $B_*$ be the pointed monoid generated by $x$ and $y$ and $C_*$ the
pointed monoid generated by $v$ and $w$. Then each of $B_*$ and $C_*$
is the pointed monoid associated to a free abelian monoid with two
generators. Since $a$ can be written as a non-trivial sum in only the
two ways given above,
we have
$$
\Ncy(A_*,a) = \Ncy(B_*,a) \bigvee \Ncy(C_*,a)
$$
and
$$
\Ncy(B_*,a) \cap \Ncy(C_*,a) = \Ncy(D_*, a),
$$
where $D_*$ is the monoid generated freely by $a$. We see that
$|\Ncy(A_*,a)|$ is the pushout in the category of $\S$-spaces of
$$
\xymatrix{
\S_* \ar[r]^{\text{diag}} \ar[d]_{\text{diag}} & (\S \times \S)_* \\
(\S \times \S)_*
}
$$
in which each map is the diagonal map $z \mapsto (z,z)$  and, for
each copy of $\S \times \S$, the group $\S$ acts diagonally:
$z(t,u) = (zt,zu)$.
\end{ex}

\subsection{Edgewise subdivision}
We recall Segal's $r$-fold edgewise subdivision $sd_r(X_\bu)$ of
a simplicial or cyclic set $X_\bu$, as presented in \cite{BHM}.
If we think of a simplicial set as a contravariant functor from the
category $\Ord$ of finite, totally
ordered, non-empty sets to the category of sets, then $sd_r(X_\bu)$ is
the functor obtained from $X_\bu: \Ord \to \mathbf{Sets}$ by
pre-composing with the endofunctor $(-)^{\amalg r}\!:\Ord \to \Ord$
sending $A$ to  $A^{\amalg r} := A \amalg \cdots\amalg A$,
with $r$ copies of $A$, indexed by $\{1,.\dots,r\}$.
More formally, $A^{\amalg r}$ is the set $A \times \{1,\dots, r\}$,
with $(a,i) \leq (b,j)$ if $i<j$ or $i=j$ and $a\le b$.
One may also view $(-)^{\amalg r}$ as an endofunctor on the
simplicial category, which is the full skeletal subcategory of $\Ord$
consisting of objects $[0], [1],\dots$. Then
$[n]^{\amalg r} = [r(n+1) -1]$ and $sd_r(X_\mathdot)_n = X_{r(n+1)-1}$.

Recall from \cite[1.1]{BHM} that for any simplicial set $X_\mathdot$
there is a homeomorphism
\begin{equation} \label{E32b}
D_r: |sd_r(X_\mathdot)| \map{\cong} |X_\mathdot|
\end{equation}
induced by the maps
$$
(sd_r(X_\bu))_n \times \Delta^n =
X_{r(n+1)-1}\times \Delta^{n} \to X_{r(n+1)-1}\times \Delta^{r(n+1)-1}
$$
sending $(x,\vec{u})$, with $x$ in $sd_r(X_\mathdot)_{n} = X_{r(n+1)-1}$
and $\vec{u} =(u_0, \dots, u_{n}) \in \Delta^{n}$ for $n \geq 0$,
to $(x, \frac{\vec{u}}{r} \oplus \cdots \oplus \frac{\vec{u}}{r})$, with
$r$ copies of $\frac{\vec{u}}{r}$.

For $r \geq 1$, define $\Lambda_r$ to be the category with the same
objects as the cyclic category $\Lambda$ and whose morphisms have the
same generators, except that the relation on $\tau_n: [n] \to [n]$
is $\tau_n^{r(n+1)} = \text{id}$. Note that $\Lambda_1 = \Lambda$.
A $\Lambda_r$-set is a contravariant functor from $\Lambda_r$ to the
category of sets. As in the case $r=1$, $\Lambda_r$ contains the
simplicial category as a sub-category and hence a $\Lambda_r$-set has
a geometric realization.
As observed in \cite[(1.4)]{BHM}, the cyclic group $C_r$ acts on
$\Lambda_r$, with the generator acting on $[n]$ by $\tau_n^{n+1}$.
Thus $C_r$ acts naturally on any $\Lambda_r$-set.

The geometric realization of the representable $\Lambda_r$-set
$\Hom_{\Lambda_r}(-, [n])$ is homeomorphic to $\R/r\Z \times\Delta^n$,
as shown in \cite[1.6]{BHM}.  Just as in the case $r=1$, we endow the
geometric realization of a $\Lambda_r$-set $X_\bu$ with an action of the
topological group $\R/r\Z$ by using the homeomorphism \cite[1.8]{BHM}
$$
|X_\bu| \cong \int_n |\Hom_{\Lambda_r}(-,[n])|  \times X_n.
$$
Henceforth, we identify
$\R/r\Z$ with $\S$ via $z
\mapsto e^{\frac{2 z \pi i}{r}}$ so that the geometric realization of
a $\Lambda_r$-set is an $\S$-space. The following fact will be used:

\begin{lem} \label{lem:cr}
For any $\Lambda_r$-set $X_\bu$, the action of the subgroup
$C_r$ of $\S$ on $|X_\bu|$ is induced by the natural action of $C_r$
on the simplicial set $X_\bu$, where the generator acts on $X_n$ by
$\tau_n^{n+1}$. 
\end{lem}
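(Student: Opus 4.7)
The plan is to check the equality of the two $C_r$-actions on the representable $\Lambda_r$-sets, and then to extend the result to arbitrary $X_\bu$ by the universal property of the coend.

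First, I would spell out the $\S$-action on $|X_\bu|$ via the coend formula
\[
|X_\bu| \cong \int^{[n]\in\Lambda_r} |\Hom_{\Lambda_r}(-,[n])| \times X_n :
\]
the topological group $\R/r\Z$ acts on the first factor of each $|\Hom_{\Lambda_r}(-,[n])| \cong \R/r\Z \times \Delta^n$ by translation, and this glues to an action on the coend which is then transported to an $\S$-action via the identification $\R/r\Z \cong \S$, $z\mapsto e^{2\pi i z/r}$ fixed just before the lemma.

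Next I would extend the formula \eqref{craction} to $\Lambda_r$: writing $\R/r\Z$ additively, the homeomorphism $|\Hom_{\Lambda_r}(-,[n])| \cong \R/r\Z \times \Delta^n$ of \cite[1.6]{BHM} is arranged so that the endomorphism induced by $\tau_n\in\Hom_{\Lambda_r}([n],[n])$ realizes as
\[
(z; u_0,\dots,u_n)\;\longmapsto\;\bigl(z+\tfrac{1}{n+1};\, u_1,\dots,u_n,u_0\bigr).
\]
For $r=1$ this recovers \eqref{craction}; the relation $\tau_n^{r(n+1)} = \mathrm{id}$ in $\Lambda_r$ is exactly the statement that iterating $r(n+1)$ times returns to the identity of $\R/r\Z$. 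Iterating $n+1$ times, the endomorphism of $|\Hom_{\Lambda_r}(-,[n])|$ induced by $\tau_n^{n+1}$ is translation by $1\in\R/r\Z$ in the first coordinate and is the identity on $\Delta^n$. Under $\R/r\Z \cong \S$, translation by $1$ is multiplication by $e^{2\pi i/r}$, the canonical generator of $C_r\subset\S$. Thus on representables the two $C_r$-actions coincide.

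Finally I would globalize. Since the family $(\tau_n^{n+1})_n$ is a natural self-transformation of $\mathrm{id}_{\Lambda_r}$ --- the statement recalled immediately before the lemma that $C_r$ acts on $\Lambda_r$ by $[n]\mapsto\tau_n^{n+1}$ --- post-composition gives a well-defined endomorphism of every $\Lambda_r$-set $X_\bu$, and hence of its coend realization $|X_\bu|$. Inserting the representable computation into each summand $|\Hom_{\Lambda_r}(-,[n])|\times X_n$ of the coend shows that this endomorphism of $|X_\bu|$ agrees with translation by $1\in\R/r\Z$, i.e.\ with multiplication by the generator of $C_r\subset\S$. Passing to powers gives the full $C_r$-action. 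The only non-routine point is the orientation/sign calibration in the middle step --- one must verify that the homeomorphism of \cite[1.6]{BHM} is normalized so that translation by $1\in\R/r\Z$ matches multiplication by $e^{2\pi i/r}$ rather than by its inverse; this is a bookkeeping check using \eqref{craction} and the identification $\R/r\Z\cong\S$ fixed in the paper, with no computation required beyond tracking conventions.
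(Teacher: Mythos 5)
Your proof is correct, but it is genuinely different in character from what the paper does: the paper's entire proof of this lemma is a one-line citation to p.~469 of B\"okstedt--Hsiang--Madsen (after their Definition 1.5), whereas you reconstruct the underlying argument. Your route --- compute the realization of $\tau_n$ on the representable $\Lambda_r$-sets as rotation by $\tfrac{1}{n+1}$ in $\R/r\Z$ together with the cyclic permutation of barycentric coordinates, deduce that $\tau_n^{n+1}$ realizes as translation by $1$ (i.e.\ multiplication by $e^{2\pi i/r}$ under the fixed identification $\R/r\Z\cong\S$), and then use the coend relation $(f^*x,t)\sim(x,f_*t)$ to transfer this to the simplicial action of $\tau_n^{n+1}$ on $X_n$ for an arbitrary $\Lambda_r$-set --- is exactly the content of the cited passage of \cite{BHM}, and the final coend step is the right mechanism; it also correctly uses the naturality of $(\tau_n^{n+1})_n$ (the $C_r$-action on $\Lambda_r$ recalled before the lemma) so that the operation is defined on all of $X_\bu$. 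The one substantive point you defer, the normalization of the homeomorphism $|\Hom_{\Lambda_r}(-,[n])|\cong\R/r\Z\times\Delta^n$ so that $\tau_n$ rotates by $+\tfrac{1}{n+1}$ (hence $\tau_n^{n+1}$ by $+1$, matching the chosen generator of $C_r$ rather than its inverse), is precisely what \cite[1.6]{BHM} supplies and what the paper's convention \eqref{craction} pins down in the case $r=1$; you identify it explicitly and indicate the consistency check (compatibility with the relation $\tau_n^{r(n+1)}=\mathrm{id}$ and with the $r=1$ formula), so this is calibration rather than a gap. In short: the paper buys brevity by outsourcing to \cite{BHM}; your version makes the convention-dependence visible and self-contained.
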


\begin{proof}
This is established on p.\,469 of \cite{BHM} (after Definition 1.5).
\end{proof}

\begin{cor}\label{freeCr}
If $a$ is not an $r^{th}$ power in $A$ then $\Ncy(A,a)^{C_r}=\{a\}$
and $|\Ncy(A,a)|^{C_r}$ is a 1-point space.
\end{cor}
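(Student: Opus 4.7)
The plan is to reduce the statement to a combinatorial analysis via Segal's edgewise subdivision $sd_r$. By Lemma~\ref{lem:cr} and the $\S$-equivariant homeomorphism $D_r:|sd_r\Ncy(A,a)|\smap{\cong}|\Ncy(A,a)|$ of \eqref{E32b}, the $C_r$-action on $|\Ncy(A,a)|$ is modeled by the simplicial $C_r$-action on $sd_r\Ncy(A,a)$, whose generator acts on $sd_r(\Ncy(A,a))_n=\Ncy(A,a)_{r(n+1)-1}$ by cyclic permutation of the $r(n+1)$-tuple by $n+1$ positions.

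The key observation is that a non-basepoint $n$-simplex $(a_0,\dots,a_{r(n+1)-1})$ with $\prod a_i=a$ is $C_r$-fixed precisely when it is periodic of period $n+1$. Such a tuple is determined by $(b_0,\dots,b_n):=(a_0,\dots,a_n)$ and has the form $(b_0,\dots,b_n,b_0,\dots,b_n,\dots,b_0,\dots,b_n)$ with $r$ repetitions; the product condition then becomes $(b_0 b_1 \cdots b_n)^r=a$. Since $a$ is not an $r$-th power in $A$, no such non-basepoint fixed tuple can exist in any dimension. Hence the pointed simplicial subset $(sd_r\Ncy(A,a))^{C_r}$ consists only of basepoints; this combinatorial fact encodes the assertion $\Ncy(A,a)^{C_r}=\{a\}$.

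To conclude that $|\Ncy(A,a)|^{C_r}$ is a one-point space, I would invoke the $C_r$-CW structure on $|sd_r\Ncy(A,a)|$ induced by the simplicial action: each non-degenerate simplex $\sigma$ with stabilizer $H_\sigma\subseteq C_r$ contributes a cell $C_r/H_\sigma\times D^n$, whose interior contains a $C_r$-fixed point if and only if $H_\sigma=C_r$, that is, if $\sigma$ is itself fixed. Since the only $C_r$-fixed simplex is the basepoint, $|\Ncy(A,a)|^{C_r}$ collapses to that single basepoint. The main technical point is precisely this last step --- transferring the simplicial absence of non-basepoint $C_r$-fixed simplices to the realization --- which relies on the fact that the $C_r$-action on $|sd_r\Ncy(A,a)|$ is cellular, with fixed cells corresponding exactly to fixed simplices.
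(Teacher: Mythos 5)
Your proposal is correct and follows essentially the same route the paper has in mind: model the $C_r$-action on $|\Ncy(A,a)|$ by the simplicial $C_r$-action on $sd_r\Ncy(A,a)$ (via Lemma~\ref{lem:cr} and $D_r$), observe combinatorially that a $C_r$-fixed non-basepoint $n$-simplex would force $a$ to be an $r$-th power, and then transfer to the realization using the identification $|X_\bu|^{C_r}=|X_\bu^{C_r}|$ for a simplicial $C_r$-set. The paper makes this transfer step implicitly through the isomorphism $\delta_r:\Ncy(A)\map{\cong} sd_r(\Ncy(A))^{C_r}$ and the resulting homeomorphism $\oDelta_r$ discussed just after the corollary, whereas you phrase it via the $C_r$-CW structure on $|sd_r\Ncy(A,a)|$ induced by the simplicial action; both are valid ways of encoding the same underlying fact.
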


If $X_\bu$ is a cyclic set, $sd_r(X_\mathdot)$ is a $\Lambda_r$-set
on which $\tau_n \in\Hom_{\Lambda_r}([n], [n])$ acts
on $sd_r(X_\mathdot)_n = X_q$ ($q={r(n+1)-1}$)  via the action of
$\tau_{q} \in\End_{\Lambda}([q])$ on $X_{q}$.
With our convention for viewing $|sd_r(X_\bu)|$ as an $\S$-space, the
homeomorphism $D_r: |sd_r X_\bu| \map{\cong} |X_\bu|$ is $\S$-equivariant
\cite[1.11]{BHM}.

In the special case $X_\mathdot=\Ncy(A)$,
if $tA^r$ denotes $A^r$ with the twisted $A^r$-action
$$
(a_1,\dots,a_r)\cdot(x_1,\dots,x_r)\cdot(b_1,\dots,b_r) =
(a_rx_1b_1,a_1x_2b_2,\dots,a_{r-1}x_rb_r),
$$
then $sd_r(X_\mathdot)$ is again a cyclic bar construction $\Ncy(tA^r,A^r)$;
see \cite[2.1]{BHM}. It is useful to think of an $n$-simplex of
$sd_r(\Ncy(A))$ as an $r \times (n+1)$ matrix of entries in $A$.
By Lemma \ref{lem:cr} the action of $C_r$ on $|sd_r(\Ncy(A))|$ is
induced by the simplicial action that permutes the rows of such matrices.

In order to study the effect of the dilation $\theta_r$, we will need a
comparison between $\Ncy(A,a)$ and $\Ncy(A,a^r)$,
given in \eqref{eq:arCr} below;
our presentation is taken from the discussion in \cite{BHM}.
There is a natural diagonal map of simplicial sets,
$$
\delta_r: \Ncy(A) \to sd_r(\Ncy(A)),
$$
which sends $(a_0, \dots, a_n)$ to
the $r \times (n+1)$ matrix with this row repeated $r$ times.
The map of associated $\S$-spaces
$$
|\delta_r|: |\Ncy(A)| \to |sd_r(\Ncy(A))|
$$
is not $\S$-equivariant, but instead satisfies \cite[(2.7)]{BHM}:
$$
|\delta_r|(z^r \cdot p) = z\cdot |\delta_r|(p), \,
\text{ for all $p \in |\Ncy(A)|, z\in \S$.}
$$
Taking $z$ in $C_r$, this shows that
the image of $|\delta_r|$ is contained in the
$C_r$-fixed points of $|sd_r(\Ncy(A))|$.
For all $r\ge1$, the map $\delta_r$ determines an isomorphism of
simplicial sets,
$$
\delta_r: \Ncy(A) \map{\cong} sd_r(\Ncy(A))^{C_r}.
$$
As shown in \cite[(2.3)]{BHM}, this determines a homeomorphism
$$
|\delta_r|:|\Ncy(A)| \map{\cong} |sd_r(\Ncy(A))^{C_r}|=|sd_r(\Ncy(A))|^{C_r}.
$$
Using the homeomorphism $D_r$ of \eqref{E32b},
define the endomorphism $\oDelta_r$ of $|\Ncy(A)|$ as
\begin{equation} \label{E32}
\oDelta_r = D_r \circ \delta_r:
|\Ncy(A)| \map{\delta_r} |sd_r \Ncy(A)| \map{D_r} |\Ncy(A)|.
\end{equation}
By what we have shown above, we have
$$
\oDelta_r(z^r \cdot x) = z\cdot \oDelta_r(x), \,
\text{for all $x \in |\Ncy(A)|$ and $z \in \S$,}
$$
and
$\oDelta$ induces a homeomorphism
$$
\oDelta_r: |\Ncy(A)|\map{\cong} |\Ncy(A)|^{C_r}.
$$
In other words, $\oDelta_r$ is an $\S$-equivariant map
$$
\oDelta_r: \rho_r^* |\Ncy(A)| \to |\Ncy(A)|
$$
that induces an $\S/C_r$-equivariant homeomorphism of $C_r$ fixed
point subspaces:
$$
\oDelta_r: \rho_r^* |\Ncy(A)| \map{\cong}
|\Ncy(A)|^{C_r}
$$

The map $\oDelta_r$ sends  the component indexed by $a \in A$ to that
indexed by $a^r \in A$, and so
restricts to a homeomorphism
\begin{equation*}
\oDelta_r:
\bigvee_{a \in A, a^r = b} |\Ncy(A, a)| \map{\cong} |\Ncy(A, b)|^{C_r},
\end{equation*}
for each $b \in A$.
In particular, $|\Ncy(A,b)|^{C_r} = \{*\}$ if $b \in A$ is
not the $r$-th power of any element of $A$.
If $A$ is the quotient of a cancellative and torsionfree monoid by a
radical ideal, then we have a homeomorphism for each $a \in A$:
\begin{equation} \label{eq:arCr}
\oDelta_r:
|\Ncy(A, a)| \map{\cong} |\Ncy(A, a^r)|^{C_r}.
\end{equation}

\begin{ex}
Consider the free commutative pointed monoid $F_n$ on generators $x_1,
\dots, x_n$. From Lemma \ref{Ncy(free)}, we have the equivalence
for each $(e_1,\dots,e_n)$:
$$
(\S/C_{e_1} \times \cdots \times S/C_{e_n})_* \map{\sim}
|\Ncy(F_n, x_1^{e_1} \cdots x_n^{e_n})|,
$$
where $\S/C_0$ is defined to be the trivial group. For any $r\ge1$,
the action of
$C_r$ on $\S/C_{e_i}$ is fixed point free unless $r \mid e_i$, in which
case  $C_r$ acts trivially. Thus the action of $C_r$ on
$|\Ncy(F_n, x_1^{e_1} \cdots x_n^{e_n})|$ fixes only the basepoint
unless $r \mid e_i$ for all $i$.
If $r \mid e_i$ for all $i$, the action of $C_r$ is trivial, and
the homeomorphism
$$
|\Ncy(F_n, x_1^{\frac{e_1}{r}}, \dots, x_n^{\frac{e_n}{r}})|
\map{\cong}
|\Ncy(F_n, x_1^{e_1}, \dots, x_n^{e_n})|^{C_r} =
|\Ncy(F_n, x_1^{e_1}, \dots, x_n^{e_n})|
$$
of \eqref{eq:arCr}
corresponds, under the equivalences of Lemma \ref{Ncy(free)},
to the homeomorphism
$$
(\S/C_{\frac{e_1}{r}} \times \cdots
\S/C_{\frac{e_n}{r}})_* \map{\cong}
(\S/C_{e_1} \times \cdots \times S/C_{e_n})_*
$$
whose inverse is the map that  raises elements to the $r$-th power.
\end{ex}

\goodbreak
\section{The Comparison Theorem}\label{sec:compare}

\subsection{The dilated cyclic bar construction}
We now introduce a variant on $\Ncy$.
If $a$ is an element of a monoid $A$, we write $A\ad{a}$ for
the monoid formed by adjoining an inverse of $a$; formally,
$A\ad{a}=S^{-1}A$ where $S=\{a^n \,|\, n \ge0\}$.

\begin{defn}\label{def:tNcy}
For a monoid $A$, define
$$
\tNcy(A) = \bigvee_{a \in A} \Ncy(A\ad{a}, a),
$$
where the element $a$ in $\Ncy(A\ad{a}, a)$ refers to the element $a/1$
of $A\ad{a}$.
Thus an $n$-simplex of $\tNcy(A)$
is given by $(a; \alpha_0, \dots, \alpha_n)$ where
$\alpha_0, \dots, \alpha_n \in A \ad{a}$, $a \in A$,
and the equation $\alpha_0 \cdots \alpha_n=a/1$ holds in $A\ad{a}$.
\end{defn}

As we saw in Definition \ref{def:Ncy(A,a)},
$\Ncy(A\ad{a},a)$ is a cyclic set for each $a$, so $\tNcy(A)$ is a
cyclic set. For each $a\in A$, $\Ncy(A,a)\to\Ncy(A\ad{a},a)$ is a
morphism of cyclic sets, and hence there is a morphism of cyclic sets
$$
\Ncy(A) \to \tNcy(A)
$$
that is natural for homomorphisms of monoids.
\comment{ 
Since $a$ is a unit of $A\ad{a}$, Example \ref{Ncy+units} shows that
$$
\Ncy(A\ad{a},a) \cong BU(A\ad{a})_*
$$
as simplicial sets, where $U(-)$ denotes the 
group of units of a monoid and $B$ refers
to the classical bar construction.
Thus $|\tNcy(A)|$ is (non-equivariantly) homotopy equivalent to
$$
\bigvee_{a \in A} |B(U(A\ad{a}))|_*,
$$
a wedge sum of pointed, abelian $K(\pi,1)$ spaces.

For the examples we care about most in this paper, $A$ and $a$ will have the
property that $L=U(A\ad{a})$ is a free abelian group of finite
rank. By Example \ref{Ex310}, it follows that for all $r\geq 1$
$\Ncy(A\ad{a}, a)^{C_r}$ is homotopy equivalent to
\[
\left[L_\R/L\right]^{C_r}_* \cong \left[L\otimes_\Z\S\right]^{C_r}_* .
\]
The action of $\S = \R/\Z$ on $L_\R/L \cong L\otimes_\Z\S$
is given here (using additive notation as in Example \ref{Ex310}) by
$$
\overline{z} \cdot \overline{b} = \overline{za + b} \pmod{L},
$$
for any $b \in U(\ad{a})_\R$.
} 

In nice enough situations, $\Ncy$ and $\tNcy$ coincide up to equivalence:

\begin{prop}
If $A = (\N^r \times G)_*$ where $G$  is an abelian group, then
the natural map $|\Ncy(A)| \to |\tNcy(A)|$ induces an $\S$-homotopy
equivalence.
\end{prop}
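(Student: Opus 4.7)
The approach is to reduce to single-generator cases via a smash product decomposition of both $\Ncy$ and $\tNcy$, and then to assemble the resulting factor equivalences using the Equivariant Whitehead Theorem \ref{EWT}.

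First, I would establish that $\tNcy$ respects smash products in the same way as $\Ncy$ does in \eqref{E310}: for pointed monoids $B, C$ of the cancellative types considered here, there is a natural isomorphism of cyclic sets
\[
\tNcy(B \wedge C) \;\cong\; \tNcy(B) \wedge \tNcy(C),
\]
compatible with the natural map $\Ncy \to \tNcy$. The key observation is that for nonzero $a \in B$ and $b \in C$ one has $(B \wedge C)\ad{a \wedge b} = B\ad{a} \wedge C\ad{b}$, since inverting $a \wedge b$ inverts $a$ and $b$ independently; combining this with the componentwise form $\Ncy(B\ad{a} \wedge C\ad{b},\, a \wedge b) \cong \Ncy(B\ad{a}, a) \wedge \Ncy(C\ad{b}, b)$ of \eqref{E310} and summing over $(a, b)$ yields the decomposition.

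Since $A = \ip{x}^{\wedge r} \wedge G_*$, this reduces the claim to verifying $\S$-homotopy equivalences separately for $\ip{x}$ and for $G_*$. For $G_*$, every nonzero element is already a unit, so $G_*\ad{g} = G_*$ and $\Ncy(G_*) \to \tNcy(G_*)$ is an isomorphism. For $\ip{x}$, I would split along the wedge decomposition of Definition \ref{def:Ncy(A,a)}: on the $n=0$ summand $\ip{x}\ad{1} = \ip{x}$ and the map is the identity, while for each $n \ge 1$ the map $|\Ncy(\ip{x}, x^n)| \to |\Ncy(\ip{x^{\pm 1}}, x^n)|$ is an $\S$-homotopy equivalence by Example \ref{Ncy(x)}.

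Finally, I would assemble these pieces via the Equivariant Whitehead Theorem. Realizations of cyclic sets are $\S$-CW complexes, so it suffices to check that the map induces a weak equivalence on $H$-fixed points for each closed subgroup $H \le \S$. Since wedges and pointed smash products commute with taking $H$-fixed points on pointed $H$-CW complexes, this reduces to the ordinary Whitehead theorem applied to the factor maps on each fixed-point subcomplex. I expect this final assembly step --- tracking the $\S$- and $C_r$-fixed point data coherently across the full smash product --- to be the one point requiring real attention, since the two single-factor cases and the smash product decomposition of $\tNcy$ are essentially formal consequences of material already recorded in the section.
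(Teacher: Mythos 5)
Your proposal is correct and follows essentially the same route as the paper: the paper's proof also disposes of the case $A=G_*$ immediately, handles $\N^r_*$ via Lemma \ref{Ncy(free)} (i.e.\ Example \ref{Ncy(x)}), and then assembles the general case from the smash-product decompositions \eqref{E310} and \eqref{E311}, leaving the multiplicativity of $\tNcy$ and the equivariant assembly implicit where you spell them out. The only remark worth making is that your final fixed-point/Whitehead step can be shortcut: a smash product of genuine $\S$-homotopy equivalences (with diagonal action) is already an $\S$-homotopy equivalence, so no appeal to the Equivariant Whitehead Theorem is needed.
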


\begin{proof}
The result is immediate if $A = G_*$, and  follows from
Lemma \ref{Ncy(free)} if $A = \N^r_*$.
The general case follows from this using \eqref{E310} and \eqref{E311}.
\end{proof}

The monoids $A$ appearing in the proposition above with $G$ finitely
generated are those for which $\Q[A]$  is smooth. (If
the group $G$ has $p$-torsion, $k[A]$ will not be smooth for
fields $k$ of characteristic $p$.) We thus think of the failure of the
map $|\Ncy(A)| \to |\tNcy(A)|$ to be an equivalence as measuring that
lack of ``smoothness of  a monoid''.

The functor $\tNcy$ is better behaved than $\Ncy$ for
``pathological'' monoids.  An example of this behavior is
the following result.  Recall from \eqref{eqdef:nil} that
$\nil(A)$ denotes the ideal of nilpotent elements in $A$.

\begin{lem} 
The map $\tNcy(A) \to \tNcy(A/\nil(A))$ is an isomorphism.
\end{lem}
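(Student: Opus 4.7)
The plan is to verify the claim summand-by-summand in the wedge decomposition $\tNcy(A) = \bigvee_{a \in A} \Ncy(A\ad{a}, a)$. First, I would dispatch the nilpotent elements: if $a \in \nil(A)$, say $a^n = 0$, then in $A\ad{a}$ one has $1 = a^n \cdot a^{-n} = 0$, so $A\ad{a}$ collapses to the one-point pointed monoid and $\Ncy(A\ad{a}, a)$ is a single point. Similarly, $\bar a = 0$ in $A/\nil(A)$, and the corresponding summand $\Ncy((A/\nil(A))\ad{0}, 0)$ is likewise trivial. So nilpotent $a$'s contribute nothing on either side of the map.

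For the non-nilpotent $a$, the strategy is to identify source and target summands with a common quotient via Remark \ref{Ncy-modI}. I would check directly that for $S = \{a^k\}$, the localization formula gives $(A/\nil(A))\ad{\bar a} \cong A\ad{a}/S^{-1}\nil(A)$, and that moreover $S^{-1}\nil(A) = \nil(A\ad{a})$ as ideals of $A\ad{a}$. The inclusion $S^{-1}\nil(A) \subseteq \nil(A\ad{a})$ is immediate; conversely, if $(c/a^n)^N = 0$ in $A\ad{a}$, then $u c^N = 0$ for some $u \in S$, so $uc$ is nilpotent in $A$ and $c/a^n = uc/(ua^n)$ visibly lies in $S^{-1}\nil(A)$. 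Since $a$ is a unit in $A\ad{a}$, it lies outside the nilradical, and Remark \ref{Ncy-modI} delivers an isomorphism
\[
\Ncy(A\ad{a}, a) \map{\cong} \Ncy(A\ad{a}/\nil(A\ad{a}), a) = \Ncy((A/\nil(A))\ad{\bar a}, \bar a).
\]

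Finally, I would observe that the quotient map $A \to A/\nil(A)$ is injective on $A \setminus \nil(A)$ (since the quotient identifies only $\nil(A)$ to the basepoint), so the non-trivial summands on the two sides are indexed by the same set. Assembling the per-summand isomorphisms then produces the desired isomorphism $\tNcy(A) \to \tNcy(A/\nil(A))$. The only subtle step is the identification $S^{-1}\nil(A) = \nil(A\ad{a})$, the monoid-theoretic analogue of the familiar compatibility between localization and nilradicals for rings; everything else is bookkeeping once this is in hand.
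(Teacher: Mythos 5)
Your proof is correct and follows essentially the same route as the paper: collapse the summands indexed by nilpotent elements, and for non-nilpotent $a$ identify $(A/\nil(A))\ad{a}$ with $A\ad{a}/\nil(A\ad{a})$ so that Remark \ref{Ncy-modI} applies. The only difference is that you spell out the verification $S^{-1}\nil(A)=\nil(A\ad{a})$, which the paper simply asserts as $(A/\nil(A))[1/a]=A[1/a]/\nil(A[1/a])$.
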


\begin{proof}
If $a \in A$ is nilpotent,
then $A\ad{a}$ is the trivial pointed monoid $\{0\}$, and
$\Ncy(A\ad{a},a)=\Ncy(0)$ is the one-point space. It follows that
$$
\tNcy(A) = \bigvee_{a \in A \setminus \nil(A)} \Ncy(A \ad{a}, a).
$$
We also have
$$
\tNcy(A/\nil(A)) =
\bigvee_{a \in A \setminus \nil(A)} \Ncy((A/\nil(A)) \ad{a}, a)
$$
and so it suffices to prove that for all $a\in A\setminus\nil(A)$,
the map
\begin{equation*} 
\Ncy(A \ad{a}, a) \to
\Ncy((A/\nil(A)) \ad{a}, a)
\end{equation*}
is an isomorphism. But this follows from Remark \ref{Ncy-modI},
since $(A/\nil(A))[1/a]=A[1/a]/\nil(A[1/a])$ and $a\notin \nil(A[1/a])$.
\end{proof}

\begin{subex}\label{tNcy(cusp)}
If $B$ is the free monoid on $x$, it follows from Example \ref{Ncy(x)}
that $|\tNcy(B)|$ is a bouquet of copies of $\S/C_r$ indexed by $r\ge1$.
If $A$ is the submonoid $\langle x^2,x^3\rangle$, then $|\tNcy(A)|$ is
the sub-bouquet indexed by $r\ge2$.
\end{subex}

\subsection{Comparison Theorem}
The Comparison Theorem \ref{CT} below concerns the map $\Ncy\to\tNcy$.
Roughly,  it says that the map becomes an equivariant homotopy
equivalence upon inverting enough dilations. Intuitively,
upon inverting dilations, every (partially cancellative) monoid
resembles a smooth monoid.

For any positive integer $c$, the dilation
$$
\theta_c: A \to A, \, a \mapsto a^c.
$$
defines a monoid endomorphism of a monoid $A$. If $c = p$ is prime,
then the induced map on monoid-rings  $\theta_p: \Z/p[A] \to \Z/p[A]$
coincides with the usual Frobenius.

Now let $\fc = (c_1, c_2, \dots)$ be an infinite list of integers with
$c_i \geq 2$ for all $i$, and define $A^\fc$ to be the abelian monoid
$$
A^\fc := \colim\{A \map{\theta_{c_1}} A \map{\theta_{c_2}} \cdots\}.
$$
We think of this construction as a form of localization, since, for
example, if $c_i = c$ for all $i$, then $(-)^\fc$ amounts to inverting
the endomorphism $\theta_c$.
Similarly, if $F$ is any functor on monoids, we define $F(A)^\fc$ by
\begin{equation}\label{F(A)c}
F(A)^\fc = \colim\{ F(A)\map{\theta_{c_1}} F(A) \map{\theta_{c_2}} \cdots\},
\end{equation}
provided this (sequential) colimit exists in the target category.
If $F$ commutes with sequential colimits, then clearly
$F(A)^\fc = F(A^\fc)$.
For example, both $\Ncy$ and $\tNcy$ commute with filtered colimits;
this is easy for $\Ncy$ and straightforward for $\tNcy$.
Hence \eqref{F(A)c} defines the cyclic sets
$\Ncy(A)^\fc=\Ncy(A^\fc)$ and $\tNcy(A)^\fc=\tNcy(A^\fc)$.

If $\rho: A \to A'$ is a homomorphism of pointed monoids, then we may
regard $A'$ as a two-sided pointed $A$-set (with the two actions
coinciding) in the obvious way. Observe that raising elements to the
power of $c$ determines an endomorphism of the simplicial set $\Ncy(A',A)$,
which we will also write as $\theta_c$. More generally, suppose $B$ is a
pointed $A$-subset of $A'$  that is closed under $b \mapsto b^c$. Then
$\theta_c$ restricts to an endomorphism of simplicial sets on $\Ncy(B,A)$.
\goodbreak

\begin{lem}\label{Ac=Asnc}
Let $A$ be a monoid and let $\fc = (c_1,c_2, \dots) $ be an
infinite sequence of integers with $c_i \geq 2$ for all $i$. Then
\begin{enumerate}
\item $A^\fc \to (A_\red)^\fc$ is an isomorphism;
\item If $A$ is pc then $A^\fc\to(A_\sn)^\fc$ is an isomorphism.
\end{enumerate}
\end{lem}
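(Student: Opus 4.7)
The plan is to verify both isomorphisms directly from the definitions of $A_\red$ and $A_\sn$, exploiting the fact that their characterizations involve ``all $n \gg 0$'' conditions that are tailor-made to interact nicely with colimits along sequences of dilations.

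For part (1), the surjection $A \onto A_\red$ yields a surjection $A^\fc \to (A_\red)^\fc$ since sequential colimits preserve surjections. For injectivity, suppose $a,b \in A$ have equal images in the $m$-th copy of $A_\red$ inside $(A_\red)^\fc$. Then, after finitely many further dilations, $\o{a}^M = \o{b}^M$ holds in $A_\red$, where $M = c_m c_{m+1} \cdots c_n$ and $\o{a},\o{b}$ denote images in $A_\red$. By the definition of $A_\red$, this means $a^{Mk} = b^{Mk}$ in $A$ for every $k \gg 0$. Choosing $n'$ large enough so that $k := c_{n+1} \cdots c_{n'}$ exceeds the threshold, the elements $a$ and $b$ at level $m$ have equal images at level $n'+1$ in $A^\fc$, giving injectivity.

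For part (2), first reduce to the case where $A$ is reduced: the seminormalization factors as $A \to A_\red \to A_\sn$ (since $A_\sn$ is reduced), and the composite $A_\red \to A_\sn$ is itself a seminormalization of $A_\red$; combining this with part (1) reduces to showing $A^\fc \to (A_\sn)^\fc$ is an isomorphism when $A = A_\red$. In this situation $A \hookrightarrow A_\sn$ is injective, and by the definition of seminormalization, every $b \in A_\sn$ satisfies $b^n \in A$ for all $n \ge N_0(b)$. For surjectivity of $A^\fc \to (A_\sn)^\fc$, given $b$ in the $m$-th copy of $A_\sn$, pick $n$ large enough that $c_m c_{m+1} \cdots c_n \ge N_0(b)$; then the image of $b$ at level $n+1$ equals $b^{c_m \cdots c_n}$, which lies in $A$, so $[b]$ is represented at level $n+1$ by an element of $A$. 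For injectivity, if $a,b \in A$ become equal in some copy of $A_\sn$ inside $(A_\sn)^\fc$, then $a^M = b^M$ in $A_\sn$ for some product $M$ of $c_i$'s, and the injectivity of $A \hookrightarrow A_\sn$ forces $a^M = b^M$ in $A$, so $[a] = [b]$ in $A^\fc$.

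The argument is essentially formal once one exploits the ``eventually'' clauses in the definitions of $A_\red$ and $A_\sn$, so I do not anticipate a serious obstacle; the only point requiring slight care is the reduction step in (2), which relies on the fact that $A_\sn$ depends only on $A_\red$ for pc monoids (a consequence of the definition of seminormalization, which factors any map to a seminormal monoid through $A_\red$). No use is made of pctf-ness versus pc, consistent with the statement.
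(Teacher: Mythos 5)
Your proof is correct and follows essentially the same route as the paper's: surjectivity plus unwinding the ``for all $n\gg0$'' congruence defining $A_\red$ (absorbing the exponent into further dilations) for (1), and for (2) reducing to the reduced case via (1), then using injectivity of $A\hookrightarrow A_\sn$ and the fact that $t^c\in A$ for $c$ a large enough product of the $c_i$. The only cosmetic difference is that the paper phrases the injectivity step in (1) via a dichotomy (either $a^c=b^c$ in $A$ or both are nilpotent), whereas you argue directly from the congruence, which is if anything a bit cleaner.
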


\begin{proof}
Since $\pi:A\to A_\red$ is a surjection, so is $A^\fc\to (A_\red)^\fc$.
Suppose that $a,b\in A$ agree in $(A_\red)^\fc$. Then there is a
$c=c_1\cdots c_n$ so that $\pi(a^c)=\pi(b^c)$ in $A_\red$.
Thus either $a^c=b^c$ in $A$ or else $a,b\in\nil(A)$ and
$a^n=b^n=0$ for $n\gg0$. In either case, $a$ and $b$ agree in $A^\fc$.

Now assume that $A$ is pc, so that $A_\sn$ exists.
By (1), we may assume that $A$ is reduced.
By definition, $A \to A_\sn$ is an injection; it follows that
$A^\fc \to (A_\sn)^\fc$ is an injection. To see that it is a surjection,
fix $t\in A_\sn$. If $c=c_1\cdots c_n$ and $n$ is a suitably large integer,
then $a=t^c$ is in $A$; in particular $a=\theta_c(t)$. Thus the image of $t$ in
$(A_\sn)^\fc$ is in $A^\fc$.
\end{proof}

\begin{thm} \label{CT}
Let $A$ be a pctf monoid and let $\fc = (c_1,c_2, \dots)$ be an
infinite sequence of integers with $c_i \geq 2$ for all $i$.
Then the natural maps of cyclic sets
\begin{equation*}
\Ncy(A)^\fc \to \tNcy(A)^\fc \map{\cong} \tNcy(A_\sn)^\fc
\end{equation*}
are $\S$-homotopy equivalences (on geometric realizations).
\end{thm}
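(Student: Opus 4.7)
The plan is to reduce to the seminormal case via Lemma~\ref{Ac=Asnc}, apply the Equivariant Whitehead Theorem to reduce to $C_r$-fixed points, match the two sides via a key algebraic lemma about $r$-th roots, and finally verify a component-wise weak equivalence. Since both $\Ncy$ and $\tNcy$ commute with filtered colimits, Lemma~\ref{Ac=Asnc}(2) shows that the maps $\Ncy(A)^\fc \to \Ncy(A_\sn)^\fc$ and $\tNcy(A)^\fc \to \tNcy(A_\sn)^\fc$ are isomorphisms, so we may replace $A$ by $A_\sn$ and assume $A$ is pctf \emph{and} seminormal. Writing $A = C/I$ with $C$ cancellative torsionfree seminormal and $I$ radical (Lemma~\ref{lem:snq}), one verifies that $C^\fc$ remains cancellative torsionfree (each $\theta_{c_i}$ is injective on a torsionfree monoid) and that $I^\fc$ is radical in $C^\fc$, so $A^\fc = C^\fc/I^\fc$ is again pctf seminormal; in particular \eqref{eq:arCr} is available for $A^\fc$.

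Next, by Theorem~\ref{EWT}, it suffices to show the map $f\colon|\Ncy(A^\fc)|\to|\tNcy(A^\fc)|$ is an $H$-weak equivalence for every closed subgroup $H\subseteq\S$. The case $H=\S$ follows immediately from Example~\ref{faithful-action}. For $H=C_r$, decompose both sides as wedges over $a\in A^\fc$. Formula \eqref{eq:arCr} gives
\[
|\Ncy(A^\fc,a)|^{C_r}\ \cong\ \bigvee_{b\in A^\fc,\ b^r=a}|\Ncy(A^\fc,b)|,
\]
and an entirely parallel description of $|\Ncy(A^\fc\ad{a},a)|^{C_r}$ follows from Example~\ref{Ex310} (noting that $a$ is a unit in $A^\fc\ad{a}$, cf.\ Example~\ref{Ncy+units}, so that its cyclic nerve is a pointed classifying space).

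The hinge of the argument is the following algebraic lemma: if $A^\fc$ is pctf seminormal, $a\in A^\fc$ is non-nilpotent, and $b\in A^\fc\ad{a}$ satisfies $b^r=a$, then $b$ already lies in $A^\fc$. One reduces by Step~1 to the cancellative torsionfree seminormal case; writing $b=c/a^n$ in the pointed group completion with $c^r=a^{rn+1}$, the explicit identity $b^{kr+s}=c^s\cdot a^{k-ns}$ (with $0\le s<r$) shows that $b^m\in A^\fc$ for all $m\ge rn+r$, so the characterization of seminormality in Example~\ref{ex:snc} forces $b\in A^\fc$. This identifies the two indexings of $r$-th roots on the two sides of the fixed-point decompositions, and, using $A^\fc\ad{a}=A^\fc\ad{b}$ when $a=b^r$, it shows that each non-trivial summand of $f^{C_r}$ is the $b$-summand of $f$ itself.

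The remaining task, which is the main obstacle, is to verify component-by-component that for each non-nilpotent $a\in A^\fc$ the inclusion $|\Ncy(A^\fc,a)|\to|\Ncy(A^\fc\ad{a},a)|$ is a (non-equivariant) weak equivalence. By Example~\ref{Ncy+units} the target is the pointed classifying space $|B(U(A^\fc\ad{a}))|_*$, a $K(\pi,1)$. One identifies the source with the same $K(\pi,1)$ using the divisibility of $A^\fc$ obtained from inverting the dilation sequence $\fc$: every factorization of $a$ in $A^\fc\ad{a}$ becomes, after multiplying through by a sufficiently large power of $a$ lying in $A^\fc$, equivalent to one already in $A^\fc$, and this rich supply of factorizations in $A^\fc$ is what makes the inclusion a weak equivalence on realizations. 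This component-wise identification, where the divisibility achieved by inverting $\fc$ plays the decisive role, is the step that demands the bulk of the work.
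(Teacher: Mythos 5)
Your chain of reductions (pass to $A_\sn$ via Lemma \ref{Ac=Asnc}, invoke the Equivariant Whitehead Theorem \ref{EWT}, dispose of $H=\S$ by Example \ref{faithful-action}, and match $C_r$-fixed components by showing that an $r$-th root of $a$ in $A\ad{a}$ already lies in $A$) tracks the paper's own reductions, and your seminormality-based root lemma is a legitimate variant of the paper's argument (the paper instead derives a contradiction from $u^l\in A$ for $l\gg0$ together with the colimit over $\fc$, so it never needs seminormality at that point; also note that your appeal to Example \ref{ex:snc} requires first passing to a cancellative quotient $A/\fp$ with $a\notin\fp$ via Remark \ref{Ncy-modI} and Proposition \ref{prop:pc} -- this is not part of your ``Step 1'' -- and your exponent bound $m\ge rn+r$ is off, though harmlessly so). The problem is the last step. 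The entire content of the theorem is the non-equivariant statement that, for each non-nilpotent $a$, the map $|\Ncy(A,a)^\fc|\to|\Ncy(A\ad{a},a)^\fc|$ (equivalently, your component map at $a\in A^\fc$) is a weak equivalence, and for this you offer only the assertion that ``divisibility'' of $A^\fc$ supplies enough factorizations, while conceding that this is where the bulk of the work lies. That is not a proof, and the heuristic does not convert into one by itself: for the cusp $A=\langle x^2,x^3\rangle$ the corresponding map is \emph{not} an equivalence before dilating (Example \ref{tNcy(cusp)}), so whatever argument you give must make essential, quantitative use of the colimit over $\fc$, not merely of the presence of many factorizations.

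Concretely, the paper's proof of this step replaces the colimits by mapping telescopes, replaces each dilation $\theta_r$ by the homotopic map $\theta'_r=\mu_r\circ D_r^{-1}\circ\alpha_r$ (which, unlike $\theta_r$, is defined on the two-sided constructions $\Ncy(B,A;m)$ for $A$-subsets $B\subset A^+$), identifies $|\Ncy(A\ad{m},A;m)|$ with $|\Ncy(A\ad{m},A\ad{m};m)|$ by a group-completion argument for classifying spaces of abelian monoids (Lemma \ref{Ncy(Am,A)}), and then filters $A\ad{m}$ by the $A$-subsets $B_l=\{b\mid bm^l\in A\}$, observing that $\theta'_r$ drops the filtration level because $\alpha_r$ multiplies the first coordinate by $m^{r-1}$; this collapse of the filtration inside the telescope is exactly what your ``multiplying through by a sufficiently large power of $a$'' gestures at, but none of this machinery (nor any substitute for it) appears in your proposal. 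Moreover, by fixing the single component $|\Ncy(A^\fc,a)|$ you have discarded the telescope structure in which maps may be replaced by homotopic ones, so it is not even clear how your formulation would let such an argument get started. As it stands, the core of the theorem is missing.
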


The proof of the Theorem \ref{CT} occupies the remainder of this section.
We begin with a series of reductions.
The first reduction is that we may assume $A$ seminormal,
since Lemma \ref{Ac=Asnc} 
implies that $\Ncy(A^\fc)\cong\Ncy((A_\sn)^\fc)$
and  $\tNcy(A^\fc)\cong\tNcy((A_\sn)^\fc)$.

Next, observe that both cyclic sets are amalgamated sums
indexed by $A^\fc$. An element of $A^\fc$ is given by an element
$a\in A$ occurring at some stage in the colimit of
$(A \map{\theta_{c_1}} A \map{\theta_{c_2}} \cdots)$.
Dropping the first few terms from $\fc$ does not change the colimit
$A^\fc$ used to index the terms.  Thus, to prove the theorem, it
suffices to show that for all nonzero $a\in A$,
\addtocounter{equation}{-1}
\begin{subequations}
\renewcommand{\theequation}{\theparentequation.\arabic{equation}}
\begin{equation} \label{E38}
\Ncy(A,a)^\fc \to \Ncy(A\ad{a},a)^\fc = \tNcy(A,a)^\fc
\end{equation}
induces an $\S$-homotopy equivalence on geometric realizations.
In this equation, $\Ncy(A,a)^\fc$ is the colimit of
$$
\Ncy(A,a) \map{\theta_{c_1}} \Ncy(A, a^{c_1})
\map{\theta_{c_2}} \Ncy(A, a^{c_1c_2}) \to \cdots
$$
and similarly for $\tNcy(A,a)^\fc$, with $A$ replaced by $A\ad{a}$.


We also claim that it suffices to assume $A$ is cancellative and torsion
free. Indeed, since $a$ is not nilpotent,
there is a prime ideal $\fp$ of $A$ such that $a \notin\fp$.
Since $A$ is pctf, the monoid $A/\fp$ is cancellative and
$(A/\fp)^+$ is torsionfree by Proposition \ref{prop:pc}.
Since $(A/\fp)\ad{a} \cong A\ad{a}/\fp\ad{a}$, the maps
$$
\Ncy(A, a) \to \Ncy(A/\fp, a)
\quad\text{and}\quad
\Ncy(A\ad{a}, a) \to \Ncy((A/\fp)\ad{a}, a)
$$
are isomorphisms by Remark \ref{Ncy-modI}. Our claim follows.

By the equivariant Whitehead Theorem \ref{EWT},
the map \eqref{E38} is an $\S$-homotopy equivalence provided
\begin{equation} \label{E528d}
|\Ncy(A^\fc,a)|^H =
|\Ncy(A,a)^\fc|^H\to |\Ncy(A\ad{a},a)^\fc|^H = |\Ncy(A\ad{a}^\fc,a)|^H
\end{equation}
\end{subequations}
\noindent
is a (non-equivariant) weak equivalence for each closed subgroup $H$ of $\S$.
By Example \ref{faithful-action}, \eqref{E38} is a homotopy equivalence
for $H=\S$ (because $a\ne1$ in $A^\fc$ implies $a\ne1$ in $A\ad{a}^\fc$).
Thus to prove Theorem \ref{CT} it suffices to
show that \eqref{E528d} is a weak equivalence for $H =C_r$, $r\ge1$.
In fact, as we next show, we may assume $r=1$:
\goodbreak

\begin{lem}
For any cancellative, torsionfree monoid $A$, if the map
\eqref{E38} is a weak equivalence for all $a$ then
the map \eqref{E528d}
is a weak equivalence for all $a$ and all closed subgroups $H\subset\S$.
\end{lem}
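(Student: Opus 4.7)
My plan is to use the edgewise subdivision homeomorphism
$\oDelta_r\colon |\Ncy(B, a)| \map{\cong} |\Ncy(B, a^r)|^{C_r}$
of \eqref{eq:arCr} to translate the $C_r$-equivariant statement into the non-equivariant one. Since $A$ is cancellative and torsionfree, so is $A\ad{a}$ (localization preserves both properties); hence \eqref{eq:arCr} applies to both $B=A$ and $B=A\ad{a}$.

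The first step is to verify that $\oDelta_r$ commutes with the dilation endomorphisms $\theta_c$. This is essentially formal: on $n$-simplices, both $\theta_c$ and the diagonal map $\delta_r$ underlying $\oDelta_r$ act by coordinate-wise operations and so commute, and the edgewise subdivision homeomorphism $D_r$ of \eqref{E32b} is natural in the cyclic set. Since $C_r$-fixed points of $\S$-CW complexes commute with sequential filtered colimits, passing $\oDelta_r$ through the colimit $(-)^\fc$ produces, for every $a\in A$, a commutative square
\[
\begin{CD}
|\Ncy(A, a)^\fc| @>>> |\Ncy(A\ad{a}, a)^\fc| \\
@V{\oDelta_r}V{\cong}V @V{\oDelta_r}V{\cong}V \\
|\Ncy(A, a^r)^\fc|^{C_r} @>>> |\Ncy(A\ad{a}, a^r)^\fc|^{C_r}
\end{CD}
\]
whose vertical maps are homeomorphisms (using $A\ad{a}=A\ad{a^r}$). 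The top row is a weak equivalence by hypothesis, so the bottom is a $C_r$-weak equivalence. This settles \eqref{E528d} for $H=C_r$ when the indexing element is of the form $a^r$ with $a\in A$.

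For an arbitrary element $a'\in A$, I compute $|\Ncy(A,a')^\fc|^{C_r}=\colim_n |\Ncy(A,a'^{c_1\cdots c_n})|^{C_r}$ stagewise via \eqref{eq:arCr}. If $a'^{c_1\cdots c_{n_0}}=b^r$ holds in $A$ for some $b$ and some $n_0\ge 0$, then $a'^{c_1\cdots c_n}$ has the unique $r$-th root $b^{c_{n_0+1}\cdots c_n}$ in $A$ for each $n\ge n_0$, and the colimit stabilizes to $|\Ncy(A,b)^{\fc^{(n_0)}}|$ where $\fc^{(n_0)}=(c_{n_0+1},c_{n_0+2},\dots)$; similarly for the right-hand side, using $A\ad{a'}=A\ad{b}$ (which holds because in a cancellative monoid, $b^r$ a unit implies $b$ a unit). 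The resulting map becomes the non-equivariant instance of \eqref{E38} for $b$ and the shifted sequence $\fc^{(n_0)}$, which is a weak equivalence by hypothesis (interpreting the hypothesis as applicable to any infinite sequence of integers $\ge 2$). If no such $n_0$ exists, the left-hand side is a one-point space by Corollary \ref{freeCr}.

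The main obstacle I foresee lies in this last case: one must argue that if $a'^{c_1\cdots c_n}$ is never an $r$-th power in $A$, then the same holds in $A\ad{a'}$, so the right-hand side is also a one-point space. This amounts to a saturation property of $A^\fc$ inside its group completion, which I expect to follow from the fact that the dilations force every element of $A^\fc$ to admit $(c_1\cdots c_n)$-th roots, but making this precise is the delicate step.
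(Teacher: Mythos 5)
Your reductions are correct and, up to the final case, essentially reproduce the paper's own argument: the paper likewise exploits the naturality of the homeomorphism $\oDelta_r$ of \eqref{eq:arCr} (valid for both $A$ and $A\ad{a}$, which are cancellative and torsionfree, so $r$-th roots are unique when they exist) to identify the $C_r$-fixed points of each colimit with the non-equivariant colimit indexed by the $r$-th root, after discarding the first $n_0$ terms of $\fc$, and then quotes the hypothesis \eqref{E38} for the shifted sequence; your remark that $A\ad{a}=A\ad{b}$ when $a^{c_1\cdots c_{n_0}}=b^r$ is exactly what makes the two sides match. (For completeness, the case $H=\S$ is handled separately by the easy observation of Example \ref{faithful-action}, as in the discussion preceding the lemma.)

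However, the step you flag as ``the delicate step'' is a genuine gap, and it is precisely where the proof has content. What must be shown is: if no power $a^{c_1\cdots c_n}$ is an $r$-th power in $A$, then no such power is an $r$-th power in $A\ad{a}$ either, so that by Corollary \ref{freeCr} the target of \eqref{E528d} is also a one-point space. Your proposed mechanism --- that the dilations force roots to exist in $A^\fc$ --- is not the right lever; what is needed is the converse direction, namely that a root in the localization forces a root in $A$ itself at a later stage of the colimit. The paper proves this by a short arithmetic argument: suppose $a^c=u^r$ in $A\ad{a}$ with $c=c_1\cdots c_n$, and write $u=b/a^m$ with $b\in A$, $m\ge0$. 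Then $u^r=a^c\in A$ and $u^{rm+1}=(u^r)^m u=a^{m(c-1)}b\in A$. Since $r$ and $rm+1$ are coprime, every integer $l\ge L$ (e.g.\ $L=mr(r-1)$) is a non-negative integer combination of $r$ and $rm+1$, so $u^l\in A$ for all $l\ge L$. Choosing $N$ with $c_{n+1}\cdots c_N\ge L$ gives $u^{c_{n+1}\cdots c_N}\in A$ and $a^{c_1\cdots c_N}=(u^{c_{n+1}\cdots c_N})^r$, i.e.\ some power $a^{c_1\cdots c_N}$ is after all an $r$-th power in $A$ --- the desired contradiction. Without this (or an equivalent saturation argument made precise), your proof of the lemma is incomplete in exactly the case where both sides must be shown contractible.
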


\begin{proof}
Fix $H=C_r$.  Since $A$ is torsionfree, if $a = b^r$ for some $b\in A$,
then $b$ is unique and we have the natural homeomorphism \eqref{eq:arCr}
for $A$ and $A\ad{a}$.
Then the assertion that \eqref{E528d} is a homotopy equivalence for $a$
is equivalent to the assertion that the map
\addtocounter{equation}{-1}
\begin{subequations}
\renewcommand{\theequation}{\theparentequation.\arabic{equation}}
\begin{equation} \label{E528c}
|\Ncy(A,b)^\fc| \to |\Ncy(A\ad{b},b)^\fc|
\end{equation}
\end{subequations}
is a (non-equivariant) homotopy equivalence, which is the hypothesis.
More generally, if $a^{c_1 \cdots c_n} = b^r$ for some $n$
and $b \in A$, then it suffices to prove \eqref{E528c} is a homotopy
equivalence (since we can omit the first $n-1$ terms from each
colimit); again, this is a special case of the hypothesis that
\eqref{E38} is a homotopy equivalence.

Finally, suppose $a^c$ is not the $r$-th
power of an element of $A$, for any $c = c_1 \cdots c_n$ with $n \geq 1$.
Then the source of \eqref{E528d} is a one-point space, by \ref{freeCr},
and it suffices to prove that the same holds for the target.
Again by \ref{freeCr}, it suffices to prove
that $a^c$ is not the $r$-th power of an element of $A\ad{a}$ for any
such $c$. Say $a^c = u^r$ for some $u \in A\ad{a}$. Writing
$u = \frac{b}{a^m}$, for some $b \in A$ and $m\ge0$, it follows that\
$$
u^{rm+1} = (u^r)^mu = a^{mc}\frac{b}{a^m} = a^{m(c-1)}b \in A.
$$
Since $r$ and $rm+1$ are relatively prime, there is a positive integer
$L$ (specifically, $L = mr(r-1)$) such that every $l\ge L$
is a non-negative integer linear combination of $r$ and $mr+1$.
It follows that $u^l \in A$ for all $l \geq L$. But then
$a^{c_1 \cdots c_N} = (u^r)^{c_{n+1} \cdots c_N}$ in $A$ for $N\gg0$,
a contradiction to the assumption that $a^c=u^r$ in $A\ad{a}$.
\end{proof}

In summary, we have reduced the proof of Theorem \ref{CT} to proving that
\eqref{E38} is a (non-equivariant) weak  equivalence when $A$ is
cancellative, seminormal and torsionfree and $a \ne 0$.
Thus we need to prove that the map
\begin{multline*}
\colim\left\{
|\Ncy(A,a)| \map{\theta_{c_1}}
|\Ncy(A,a^{c_1})| \map{\theta_{c_2}} \cdots
\right\}
\to \\
\colim\left\{
|\Ncy(A\ad{a},a)| \map{\theta_{c_1}}
|\Ncy(A\ad{a},a^{c_1})| \map{\theta_{c_2}} \cdots
\right\}
\end{multline*}
is a weak equivalence.
As with any filtered system of simplicial sets,  the
colimits of these sequences of maps coincide with their homotopy colimits
(up to homotopy equivalence) and these homotopy colimits
are given by the mapping telescope construction; see \cite[XII.3.5]{BKan}.
It thus suffices to prove that
\begin{multline*}
\hocolim\left\{
|\Ncy(A,a)| \map{\theta_{c_1}}
|\Ncy(A,a^{c_1})| \map{\theta_{c_2}} \cdots
\right\}
\to \\
\hocolim\left\{
|\Ncy(A\ad{a},a)| \map{\theta_{c_1}}
|\Ncy(A\ad{a},a^{c_1})| \map{\theta_{c_2}} \cdots
\right\}
\end{multline*}
is a homotopy equivalence. The advantage of using homotopy colimits
(i.e., mapping telescopes) is that we may replace the $\theta_{c_i}$
with homotopy equivalent maps without affecting the homotopy colimit.

For a two-sided $A$-set $B$ for which the two actions of $A$ on $B$
coincide, if we regard an $n$-simplex of $sd_r(\Ncy(B,A))$ as an
$r\times(n+1)$ matrix then taking products of the columns
defines a simplicial map
\begin{align*}
\mu_r: sd_r(\Ncy(B,A)) &\to \Ncy(B,A), \\
\mu_r(b, a_{1,1}, \dots, a_{1,n}, a_{2,0}, \dots, a_{r,n}) &=
\left((b a_{2,0} \cdots a_{r,0}), (a_{1,1} \cdots a_{r,1}), \cdots)\right.
\end{align*}
Following \cite[2.5]{BHM}, for any simplicial set $X_\mathdot$,
define a continuous map on geometric realizations
$$
\psi_r: |sd_r(X_\bu)| \to |X_\bu|
$$
by sending $(x,\vec{u})$ in $sd_r(X)_n\times\Delta^{r(n+1)-1}$ to
$(x,\vec{0}\oplus\cdots\oplus\vec{0}\oplus\vec{u}) =
(d_0^{(r-1)(n+1)}(x), \vec{u})$ in $|X_\bu|$. Here,
$d_0^{(r-1)(n+1)}$ is the iterated simplicial face map.
The map $\psi_r$ is homotopic to the map $D_r$ defined in \eqref{E32b}
via the homotopy of \cite[2.5]{BHM}:
\[
H_t(x, \vec{u})=
(x,(t/r)\vec{u}\oplus\dots\oplus(t/r)\vec{u}\oplus(1-t+t/r)\vec{u}).
\]

Here is a picture summarizing the spaces and maps we have introduced
so far:
$$
\xymatrix{
|\Ncy(B,A)| \ar@{-->}[dr]^{\theta_r} \ar@{-->}[r]^{|\delta_r|} &
|sd_r \Ncy(B,A)| \ar[d]^{\mu_r}
\ar@<1ex>[rr]^{D_r \phantom{X} \cong} \ar@<-1ex>[rr]_{\psi_r} &&
|\Ncy(B,A)| \\
& |\Ncy(B,A)| \\
}
$$
The dotted arrows are defined when $A=B$. One may easily verify
that the triangle commutes when $A=B$.
The map $\psi_r \circ|\delta_r|$ is induced by the simplicial map
$$
\alpha_r: \Ncy(A) \to \Ncy(A)
$$
sending $(a_0, \dots, a_n)$ to
$(a_0^r  \left(\prod_{i=1}^n a_i\right)^{r-1}\!,\ a_1,\dots,a_n).$
In other words, for each $m$ in $A$, the map
$$
\alpha_r: \Ncy(A,A;m) \to \Ncy(A,A;m^r)
$$
is induced by multiplication by $m^{r-1}$ on the first copy of $A$.

Even though $\delta_r$ is not defined unless $A=B$,
the simplicial map
$$
\alpha_r: \Ncy(B,A; m) \to \Ncy(B,A; m^r)
$$
is defined for any $A$-subset $B$ of the group completion $A^+$
and any $m \in A$. In this situation,
$\alpha_r$ is the map induced by the endomorphism
$b\mapsto bm^{r-1}$ of $B$.  We define the map
$$
\theta'_r: |\Ncy(B,A; m)| \to |\Ncy(B,A; m^r)|
$$
to be the composition $\theta'_r = \mu_r \circ D_r^{-1} \circ \alpha_r$.
When $B=A$, $\theta'_r = \mu_r\circ D_r^{-1}\circ\psi_r\circ\delta_r$
is homotopic to $\theta_r$.

Since in any mapping telescope one may replace the maps by homotopic
ones without affecting the homotopy type, to prove the theorem it
suffices to show that
\begin{multline*}
\hocolim \left\{ |\Ncy(A,m)| \map{\theta'_{c_1}}
|\Ncy(A,m^{c_1})| \map{\theta'_{c_2}} \cdots\right\}
\to \\
\hocolim \left\{ |\Ncy(A\ad{m},m)| \map{\theta'_{c_1}}
|\Ncy(A\ad{m},m^{c_1})| \map{\theta'_{c_2}} \cdots\right\}
\end{multline*}
is a homotopy equivalence.

\begin{lem}\label{Ncy(Am,A)}
The natural inclusion map
\begin{equation*} 
|\Ncy(A\ad{m},A;m)| \into |\Ncy(A\ad{m},A\ad{m};m)|
\end{equation*}
is a homotopy equivalence.
\end{lem}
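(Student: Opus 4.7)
The plan is to identify both cyclic simplicial sets with classifying spaces of abelian monoids and then invoke a group-completion argument.

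First, since $m$ is invertible in $A\ad{m}$ and $A\ad{m}$ is commutative cancellative, for any $n$-simplex $(b,a_1,\dots,a_n)$ of $\Ncy(A\ad{m},A\ad{m};m)$ the relation $b\cdot a_1\cdots a_n=m$ forces the product $a_1\cdots a_n=m/b$ to be a unit of $A\ad{m}$, and hence each $a_i$, as well as $b$, must lie in $G:=U(A\ad{m})$. The same argument applied to $\Ncy(A\ad{m},A;m)$ shows that each $a_i$ lies in $S:=A\cap G$, the submonoid of $A$ consisting of divisors (in $A$) of powers of $m$. By Example \ref{Ncy+units} one has $\Ncy(A\ad{m},A\ad{m};m)=\Ncy(G_*,m)\cong BG_*$, the classifying space of $G$ with disjoint basepoint. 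Parameterizing $n$-simplices of the source by their tails $(a_1,\dots,a_n)\in S^n$, with $b=m/(a_1\cdots a_n)\in G$ uniquely determined, gives a simplicial isomorphism $\Ncy(A\ad{m},A;m)\cong BS_*$ compatible with the usual face and degeneracy operators of the bar construction. Under these identifications the inclusion of the lemma becomes the map $BS_*\to BG_*$ induced by the monoid inclusion $S\hookrightarrow G$.

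Next, it suffices to show that $|BS|\to|BG|$ is a weak equivalence. By construction, $m\in S$ is cofinal in the sense that every element of $S$ divides some power of $m$, so the group completion $S^+$ coincides with the localization $S[m^{-1}]=G$, which is a torsion-free abelian group (since $A$ is torsion-free cancellative). I would then invoke the classical group-completion theorem for the discrete commutative cancellative monoid $S$ to conclude $\Omega|BS|\simeq G$ as spaces. Since $|BS|$ is connected and its loop space is weakly equivalent to the discrete group $G$, the higher homotopy groups of $|BS|$ all vanish, so $|BS|$ is a $K(G,1)$, as is $|BG|$. The inclusion induces the identity on $\pi_1=G$ and is therefore a weak equivalence; since both sides are CW complexes, it is in fact a homotopy equivalence by the Whitehead theorem.

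The main obstacle will be the careful justification of the group-completion theorem in this discrete setting, where the submonoid $S$ may fail to be finitely generated even when $A$ is. The cofinality of $m$ in $S$ is the essential input, providing the filtered structure that guarantees the homological hypothesis of the group-completion theorem (namely, that the $\pi_0$-action on homology becomes invertible after localizing at $S$) is automatically satisfied. A direct construction of a simplicial homotopy inverse would be awkward, since clearing denominators of elements of $A\ad{m}$ is not canonical; the classifying-space reformulation cleanly bypasses this difficulty.
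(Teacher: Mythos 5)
Your proof takes essentially the same approach as the paper's: both identify $\Ncy(A\ad{m},A;m)$ with $B(U_A)_*$ (your $BS_*$) and $\Ncy(A\ad{m},A\ad{m};m)$ with $B(U(A\ad{m}))_*$ via the tail parameterization, observe that $U(A\ad{m})$ is the group completion of $U_A=S$, and invoke the fact that the classifying space of an abelian monoid is homotopy equivalent to that of its group completion. The paper simply cites this fact, whereas you sketch a justification via the group-completion theorem; your sketch is sound, but note that the hypothesis of the group-completion theorem (central $\pi_0$-action on homology) holds automatically because $S$ is abelian --- the cofinality of $m$ in $S$, while true, is not what makes that hypothesis hold, so the last paragraph of your argument slightly misidentifies its role.
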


\begin{proof}
As observed in Example \ref{Ncy+units},
the right hand side is the classifying space of the group $U(A\ad{m})$,
since $\Ncy(A\ad{m},m) = \Ncy(U(A\ad{m}),m)$.
Similarly, if we let $U_A$ denote $A\cap U(A\ad{m})$ then the left hand side is
$$
\Ncy(A\ad{m},A;m) = \Ncy(U(A\ad{m}),U_A;m),
$$
and the map $\Ncy(U(A\ad{m}),U_A;m) \to B(U_A)$
sending $(b,a_1, \dots, a_n)$ to $(a_1, \dots, a_n)$ is an
isomorphism. This shows that the geometric realization
$|\Ncy(A\ad{m},A;m)|$ is the classifying
space of the monoid $U_A$, and the map in the lemma 
is the map on classifying spaces induced by the inclusion of $U_A$
into $U(A\ad{m})$. Since $U(A\ad{m})$ is the group completion of
$U_A$, Lemma \ref{Ncy(Am,A)} follows from the fact that the
classifying space of an abelian monoid coincides (up to homotopy
equivalence) with that of its group completion.
\end{proof}

We have thus reduced the proof of Theorem \ref{CT} to showing that
the map from
$$
\hocolim \left\{|\Ncy(A,m)| \map{\theta'_{c_1}}
|\Ncy(A,m^{c_1})| \map{\theta'_{c_2}} \cdots\right\}
$$
to
$$
\hocolim \left\{|\Ncy(A\ad{m},A;m)| \map{\theta'_{c_1}}
|\Ncy(A\ad{m},A; m^{c_1})| \map{\theta'_{c_2}} \cdots\right\}
$$
is a homotopy equivalence for all $m \in A$.

To do this, we form a filtration of $A\ad{m}$ by $A$-subsets by
defining, for each non-negative integer $l$, the $A$-set
$$
B_l = \{b \in A\ad{m} \mid bm^l \in A\}.
$$
Observe that $B_0$ is $A$ and $\cup_{l\geq 0} B_l$ is $A\ad{m}$.
We get a filtration of spaces
$$
|\Ncy(A,m)| = |\Ncy(B_0,A;m)| \subset |\Ncy(B_1,A;m)|
\subset \cdots \subset |\Ncy(A\ad{m},A;m)|
$$
with $|\Ncy(A\ad{m},A;m)| = \bigcup_{l \geq 0} |\Ncy(B_l,A;m)|$.
Moreover, the map $\theta'_r$ preserves this
filtration, since each of the maps $\alpha_r$, $D_r$ and $\mu_r$
does. It therefore suffices to prove that
the map from
$$
\hocolim \left\{|\Ncy(B_l,A;m)| \map{\theta'_{c_1}}
|\Ncy(B_l,A; m^{c_1})| \map{\theta'_{c_2}} \cdots\right\}
$$
to
$$
\hocolim \left\{|\Ncy(B_{l+1},A;m)| \map{\theta'_{c_1}}
|\Ncy(B_{l+1},A; m^{c_1})| \map{\theta'_{c_2}} \cdots\right\}
$$
is a homotopy equivalence, for all $l \geq 0$. But this is clear:
For any $r \geq 2$, the map
$$
|\Ncy(B_{l+1},A;m)| \map{\theta'_r} |\Ncy(B_{l+1},A; m^r)|
$$
lands in the subspace $|\Ncy(B_{l},A; m^r)|$,
since $\alpha_r$ sends the $n$-simplex $(b,a_1, \dots, a_n)$ to $(bm^{r-1},a_1, \dots, a_n)$, and if $bm^{l+1} \in A$, then $bm^{r-1}m^l
\in A$.

This completes the proof of the Comparison Theorem \ref{CT}.

\bigskip\goodbreak
\subsection{Topological Hochschild homology}

We now interpret Theorem \ref{CT} in terms of topological Hochschild homology.

For a ring $k$, we write $TH(k)$ for the {\em topological Hochschild homology}
spectrum of $k$. It is a cyclotomic spectrum in the sense of
\cite[2.2]{HM}; see \cite[2.4]{HM} for details.
For a prime $p$ and integer $n\ge1$, one defines the ordinary spectrum
\begin{equation}\label{def:TR(k)}
TR^n(k; p) := TH(k)^{C_{p^{n-1}}}.
\end{equation}
There are natural maps of ordinary spectra, called {\it Restriction} and
{\it Frobenius}, of the form
$$
Res,F: TR^n(k;p) \to TR^{n-1}(k,p).
$$
The ordinary spectrum $TC^n(k;p)$ is the homotopy equalizer of the
maps $Res,F$.  The {\em topological cyclic homology at $p$}
of the ring $k$ is the pro-spectrum
$$
\{TC^n(k;p)\} = \left\{ \cdots \map{Res} {TC^3(k;p)}
 \map{Res} {TC^2(k;p)} \map{Res}  {TC^1(k;p)} \right\}
$$
where the transition maps are given by $Res$.

We shall need the following important theorem of Hesselholt-Madsen
\cite[7.1]{HM}. Recall that a map of $\S$-spectra $X \to Y$ is called
an {\em $\cF$-equivalence} if the induced map on fixed point spectra
$X^C \to Y^C$ is a weak equivalence of (non-equivariant) spectra
for all finite subgroups $C$ of $\S$.

\begin{thm}[Hesselholt-Madsen] \label{thm:HM}
For any ring $k$ and monoid $A$, there is a natural $\cF$-equivalence
$$
TH(k) \smsh |\Ncy(A)| \map{\sim_{\cF}}
TH(k[A])
$$ of cyclotomic spectra.
In particular, we have an equivalence of ordinary spectra
$$
TR^n(k[A],p) \sim \left(TH(k) \smsh |\Ncy(A)|\right)^{C_{p^{n-1}}},
$$
for all primes $p$ and integers $n \geq 1$.
\end{thm}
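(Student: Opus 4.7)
The plan is to reduce the statement to a computation for the spherical monoid ring $\SS[A]$ and then base change up to $k$. Recall that B\"okstedt's construction realizes $TH(R)$ as the geometric realization of a cyclic spectrum whose $n$-simplices are (a suitable thickening of) $R^{\wedge(n+1)}$, with face and degeneracy maps given by multiplication and unit, and with the cyclic operator implementing the cyclic rotation. The resulting $\S$-spectrum carries a cyclotomic structure built from the edgewise subdivision and diagonal $\delta_r$ of Section \ref{sec:cycbar}.

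First, I would identify $k[A]$ as $k \smsh \SS[A]$, where $\SS[A]$ is the pointed spherical monoid ring $\Sigma^\infty A$. Since B\"okstedt's construction is monoidal up to weak equivalence, one has a natural $\cF$-equivalence $TH(k \smsh \SS[A]) \simeq_{\cF} TH(k) \smsh_{TH(\SS)} TH(\SS[A])$, and in fact $TH(\SS)=\SS$, so this simplifies to $TH(k) \smsh TH(\SS[A])$. Thus the problem is reduced to producing a natural cyclotomic equivalence $TH(\SS[A]) \simeq \SS \smsh |\Ncy(A)|$.

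Second, I would compute $TH(\SS[A])$ directly from the B\"okstedt model. The $n$-simplices are weakly equivalent to $\SS[A]^{\wedge(n+1)} \cong \SS[A^{\wedge(n+1)}] = \Sigma^\infty(A^{\wedge(n+1)})$, and under this identification the simplicial and cyclic structure maps are precisely those defining $\Ncy(A)$ in Section \ref{sec:cycbar}. Since $\Sigma^\infty$ commutes with geometric realization of simplicial pointed spaces, one obtains a cyclic-equivariant equivalence $TH(\SS[A]) \simeq \Sigma^\infty|\Ncy(A)| = \SS \smsh |\Ncy(A)|$. The cyclotomic structure on the right-hand side uses the diagonal map $\oDelta_r: \rho_r^*|\Ncy(A)| \smap{\cong} |\Ncy(A)|^{C_r}$ of \eqref{E32}, combined with the trivial cyclotomic structure on $\SS$; by construction the diagonals $\delta_r$ of the cyclic bar construction are induced from the B\"okstedt cyclotomic diagonal on $TH(\SS[A])$, so the two structures agree.

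Combining the two steps yields the asserted cyclotomic $\cF$-equivalence $TH(k) \smsh |\Ncy(A)| \simeq_{\cF} TH(k[A])$. The final statement about $TR^n(k[A];p)$ is then immediate: by definition $TR^n(k[A];p) = TH(k[A])^{C_{p^{n-1}}}$, and an $\cF$-equivalence induces weak equivalences on all finite cyclic fixed point spectra. The principal technical obstacle is not the underlying homotopy equivalence, which is essentially formal from the definitions, but rather verifying compatibility with the cyclotomic structure: one must check that the B\"okstedt cyclotomic diagonal on $TH(\SS[A])$ transports under the equivalence to the diagonal $\oDelta_r$ on $|\Ncy(A)|$. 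This comparison is exactly the content of \cite[Section 2]{BHM} and \cite[7.1]{HM}, and depends on matching the edgewise subdivision $sd_r$ with the $r$-fold decomposition of B\"okstedt's $TH$-model at the level of cyclic objects.
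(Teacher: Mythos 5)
The paper does not actually prove this statement: it is quoted verbatim from Hesselholt--Madsen \cite[7.1]{HM} and used as a black box, so there is no internal argument to measure you against. Judged on its own terms, your sketch follows the standard line of reasoning (decompose $k[A]$ as $k$ smashed with the pointed monoid, identify the cyclic bar construction inside the B\"okstedt model, and check cyclotomic compatibility via edgewise subdivision), and for what it is worth the actual Hesselholt--Madsen proof is closer to a direct level-wise identification of the cyclic spectrum underlying $TH(k[A])$ with $TH(k)_\bu \smsh \Ncy(A)_\bu$ --- exploiting that $A$ is a discrete pointed monoid, so no extra stabilization is needed --- rather than a base change through $TH(\SS[A])$ as in your first step.

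However, as a proof your proposal has a genuine gap, concentrated exactly where the theorem is hard. The assertion that ``B\"okstedt's construction is monoidal up to weak equivalence'' only yields, without further work, a non-equivariant equivalence $TH(k\smsh\SS[A])\simeq TH(k)\smsh TH(\SS[A])$; the content of the theorem is that the comparison map is an $\cF$-equivalence, i.e.\ an equivalence on $C_r$-fixed point spectra for every finite $r$, and a map of $\S$-spectra that is a non-equivariant equivalence is in general very far from inducing equivalences on genuine fixed points. Establishing this requires the specific point-set analysis of fixed points of the B\"okstedt construction (via $sd_r$ and the restriction maps) that Hesselholt and Madsen carry out, and it is not ``essentially formal from the definitions'' as you claim. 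Moreover, your closing paragraph defers precisely this cyclotomic/fixed-point compatibility to \cite[Section 2]{BHM} and \cite[7.1]{HM} --- the latter being the very theorem under discussion --- so the argument is circular as a standalone proof. Since the paper itself treats the result as an external citation, the honest options are either to cite \cite[7.1]{HM} outright (as the paper does) or to supply the fixed-point analysis in detail; your sketch currently does neither.
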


Here is an immediate consequence of the Comparison Theorem \ref{CT} and the
Hesselholt-Madsen Theorem \ref{thm:HM}. The colimit
$TR^n(k[A];p)^\fc$ is defined in \eqref{F(A)c}.

\begin{cor}
Let $k$ be a ring, let $A$ be a pctf monoid, and let $\fc=(c_1,c_2,\dots)$
be an infinite sequence of integers with $c_i \geq 2$ for all $i$.
Then we have an equivalence of ordinary spectra, natural in $k$ and $A$:
$$
TR^n(k[A];p)^\fc
\map{\sim}
\left(TH(k) \smsh |\tNcy(A)^\fc|\right)^{C_{p^{n-1}}}.
$$
\end{cor}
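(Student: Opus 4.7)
The plan is to derive the corollary by combining the Hesselholt-Madsen Theorem \ref{thm:HM} with the Comparison Theorem \ref{CT}. The strategy has three steps: (i) apply Hesselholt-Madsen at each stage of the tower to replace $TR^n(k[A];p)$ with $(TH(k)\smsh|\Ncy(A)|)^{C_{p^{n-1}}}$, (ii) pass the filtered colimit along $\fc$ through the smash and fixed-point functors, and (iii) invoke Theorem \ref{CT} to replace $|\Ncy(A)|^\fc$ by $|\tNcy(A)^\fc|$.

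For step (i), I would specialize Theorem \ref{thm:HM} and take $C_{p^{n-1}}$-fixed points of the $\cF$-equivalence to obtain the natural equivalence of ordinary spectra
\[
TR^n(k[A];p) = TH(k[A])^{C_{p^{n-1}}} \simeq \bigl(TH(k)\smsh|\Ncy(A)|\bigr)^{C_{p^{n-1}}}.
\]
The dilations $\theta_{c_i}$ are endomorphisms of $A$; by functoriality of $\Ncy(-)$ and of $k[-]$, they induce compatible self-maps on both sides, so this equivalence is natural for the dilation tower.

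For step (ii), taking the filtered colimit of these equivalences along $\fc$ yields
\[
TR^n(k[A];p)^\fc \simeq \colim_{\theta_c}\bigl(TH(k)\smsh|\Ncy(A)|\bigr)^{C_{p^{n-1}}}.
\]
Because $C_{p^{n-1}}$ is finite, genuine fixed points commute with filtered colimits of equivariant spectra; smash products commute with filtered colimits; and both $\Ncy(-)$ and geometric realization preserve filtered colimits, so $|\Ncy(A)|^\fc\cong|\Ncy(A^\fc)|$. Commuting these operations gives
\[
\colim_{\theta_c}\bigl(TH(k)\smsh|\Ncy(A)|\bigr)^{C_{p^{n-1}}}
\simeq \bigl(TH(k)\smsh|\Ncy(A)^\fc|\bigr)^{C_{p^{n-1}}}.
\]

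For step (iii), the Comparison Theorem \ref{CT}, applicable because $A$ is pctf, says the natural map $|\Ncy(A)^\fc|\to|\tNcy(A)^\fc|$ is an $\S$-homotopy equivalence; in particular it is a $C_{p^{n-1}}$-equivalence on the level of $\S$-CW complexes. Smashing with $TH(k)$ and taking $C_{p^{n-1}}$-fixed points then preserves the equivalence, yielding
\[
\bigl(TH(k)\smsh|\Ncy(A)^\fc|\bigr)^{C_{p^{n-1}}}
\map{\sim}
\bigl(TH(k)\smsh|\tNcy(A)^\fc|\bigr)^{C_{p^{n-1}}}.
\]
Composing the three equivalences produces the desired map. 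Naturality in $k$ and $A$ is automatic since every construction used is functorial.

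The main obstacle is step (ii): one must verify that the filtered colimit over $\fc$ of genuine $C_{p^{n-1}}$-fixed point spectra agrees with the $C_{p^{n-1}}$-fixed points after first forming the smash with the colimit $\S$-space. This holds for finite groups but needs a careful model-categorical setup; in particular $|\Ncy(A)|$ should be (or be replaced by) a cofibrant $\S$-CW object so that $TH(k)\smsh(-)$ is homotopically meaningful and commutes with the colimit, and the naturality of the Hesselholt-Madsen equivalence with respect to dilations should be realized at the point-set level (e.g.\ from the construction in \cite{HM}) rather than only up to homotopy.
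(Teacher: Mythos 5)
Your proof is correct and fills in exactly the argument the paper intends when it calls this an ``immediate consequence'' of Theorems~\ref{CT} and~\ref{thm:HM}: apply Hesselholt--Madsen, pass the dilation colimit through the smash and $C_{p^{n-1}}$-fixed-point functors, and use the $\S$-homotopy equivalence $|\Ncy(A)^\fc|\simeq|\tNcy(A)^\fc|$ from the Comparison Theorem. The commutation facts you flag in the last paragraph are precisely those the paper itself invokes (without further comment) in the proof of Theorem~\ref{Thm625}, so your caution is reasonable but does not indicate a gap relative to the paper's own standards.
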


\section{$\tOmega$ and affine excision}\label{sec:tOmega}

The cyclic set $\tNcy (A)$ is best behaved for seminormal monoids;
see Example \ref{tNcy(cusp)}.
Since we will need to consider non-seminormal monoids in this
paper, we introduce $\tOmega_A$, a slight modification of the functor
$A\mapsto\tNcy(A)$. Recall that $A_\sn$ denotes the seminormalization
of $A$; if $A$ is a pc monoid then $A_\sn$ exists
by Proposition \ref{prop:snfunctor}.

\begin{defn}\label{def:tOmega-A}
For a  pc monoid $A$, define $\tOmega_A$ to be the $\S$-space
$$
\tOmega_A = |\tNcy(A_\sn)|.
$$
Since $A \mapsto A_\sn$ is a functor, the assignment $A\mapsto\tOmega_A$
is also functorial. Moreover, there is a natural map
$|\Ncy(A)| \to \tOmega_A$ of $\S$-spaces.
\end{defn}

\comment{
The motivation for writing this functor as $\tOmega$ comes from the
following result (which is unproven, and not really needed in this paper):

\begin{prop} For any field $k$, let $A$ be the monoid associated to an
affine toric variety $U=\Spec k[A]$.
There is a natural isomorphism
$$
\tH_p(\tOmega_A, k) \cong \Gamma(U, \tOmega^p_{-/k})
$$
where the group on the right refers to the sheaf defined by Danilov
\cite{Dan1479}.
\end{prop}

Notice that Theorem \ref{CT}, in
conjunction with this proposition, gives a new proof of the fact that
$$
\Gamma(U_\sigma, \tOmega^p)^\fc \cong HH_p(U_\sigma)^\fc.
$$
This isomorphism was one of two key theorems needed to prove
Gubeladze's dilation conjecture in characteristic zero (\cite{chww-toric}).
 The other main
ingredient was the assertion the the cohomology of $\tOmega^p$
satisfies $cdh$ descent for schemes, which was essentially a theorem of
Danilov. We now turn our focus to proving a suitable analogue of
this $cdh$ descent result: Roughly, we prove that $\tOmega$ can be
generalized to a sheaf on pctf monoid schemes, using Zariski descent
and \ref{lem:promote}, and that it satisfies $cdh$ descent for
monoid schemes (see Theorem \ref{MT2p}).
}

If $I$ is an ideal in a pc monoid $A$, then $\tOmega_{A/I}=|\tNcy(A_\sn/J)|$,
where $J=\sqrt{IA_\sn}$, because $A_\sn/J\to(A/I)_\sn$ is an
isomorphism by Lemma \ref{lem:newsn}. The quotient map $A\to A/I$
induces a map $\tOmega_{A} \to \tOmega_{A/I}$; we claim that this
map is onto. To see this, we may assume $A$ seminormal and $I$ radical.
Then for each $a\not\in I$ the monoid $A\ad{a}$ is seminormal
by \ref{lem:snloc1}, and $(A/I)\ad{a}=(A\ad{a})/(I\ad{a})$.
Thus the claim follows from the observation in Remark \ref{Ncy-modI}
that $\Ncy(A,a)\smap{\cong}\Ncy(A/I,a)$.

\begin{defn}\label{def:tOmega(A,I)}
Given an ideal $I$ in a pc monoid $A$,
we define $\tOmega_{A,I}$ to be the fiber over the basepoint of
$\tOmega_{A/I}$ of the surjection
$\tOmega_{A} \onto \tOmega_{A/I}$
induced by the canonical map $A \onto A/I$;
by construction, $\tOmega_{A,I}$ is the realization of a cyclic set,
and hence an $\S$-space. Since $(A/I)_\sn=(A/\sqrt{I})_\sn$ we have
$\tOmega_{A,I}=\tOmega_{A,\sqrt{I}}$ .
\end{defn}

\begin{lem}
If $A$ is a pc monoid and $I$ is an ideal, then
$$
\tOmega_{A,I} \into \tOmega_{A} \onto \tOmega_{A/I}
$$
is a split cofibration sequence of $\S$-spaces.
In particular, this sequence determines a fibration sequence of
$\S$-spectra upon smashing it with any $\S$-spectrum.
\end{lem}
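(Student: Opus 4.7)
The plan is to exploit the wedge decomposition of $\tNcy$ directly, after first reducing to the seminormal, radical-ideal case. Since $\tOmega_A=|\tNcy(A_\sn)|$ and $\tOmega_{A/I}=|\tNcy((A/I)_\sn)|=|\tNcy(A_\sn/J)|$ for $J=\sqrt{IA_\sn}$ (by Lemma \ref{lem:newsn}), and since $\tOmega_{A,I}=\tOmega_{A,\sqrt{I}}$ by Definition \ref{def:tOmega(A,I)}, I may assume $A$ is seminormal and $I$ is a radical ideal of $A$ and must prove the claim for the induced surjection $|\tNcy(A)|\onto|\tNcy(A/I)|$.

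Now unwind the wedge decomposition of Definition \ref{def:tNcy}:
\[
\tNcy(A) \;=\; \Bigl(\bigvee_{a \in I} \Ncy(A\ad{a},a)\Bigr) \,\vee\, \Bigl(\bigvee_{a \in A\setminus I} \Ncy(A\ad{a},a)\Bigr).
\]
The quotient map $A\onto A/I$ sends each $a\in I$ to $0$, and the summand $\Ncy(A\ad{0},0)$ is a single point; thus the first wedge summand is collapsed to the basepoint. For the second summand I would check that for each $a\in A\setminus I$, the induced map $\Ncy(A\ad{a},a)\to\Ncy((A/I)\ad{\bar a},\bar a)$ is an isomorphism of cyclic sets. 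Indeed, $(A/I)\ad{\bar a}=A\ad{a}/\widetilde{I}$ where $\widetilde{I}=IA\ad{a}$, and since $I$ is radical in $A$, $a\notin \widetilde{I}$: if $a^{k+1}=ja^{-k}\cdot a^{k+1}\in I$ for some $j\in I$ we would have $a\in\sqrt{I}=I$. Remark \ref{Ncy-modI} then yields the required isomorphism, which is $\S$-equivariant on realizations as it comes from a cyclic map.

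Moreover, the correspondence $a\mapsto\bar a$ is a bijection between $A\setminus I$ and $(A/I)\setminus\{0\}$, so the second wedge summand maps isomorphically onto $\tNcy(A/I)$ in its entirety. Hence the surjection $\tNcy(A)\onto\tNcy(A/I)$ is exactly the wedge projection
\[
\bigvee_{a\in I}\Ncy(A\ad{a},a) \,\vee\, \bigvee_{a\notin I}\Ncy(A\ad{a},a) \,\longrightarrow\, \bigvee_{a\notin I}\Ncy(A\ad{a},a),
\]
and the inclusion of the second wedge factor is a canonical section of $\S$-cyclic sets. On geometric realizations this identifies $\tOmega_{A,I}$ with $\bigvee_{a\in I}|\Ncy(A\ad{a},a)|$, and the whole sequence is a split cofibration sequence of based $\S$-CW complexes, because inclusions of wedge summands of pointed CW complexes are cofibrations and the splitting is $\S$-equivariant.

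Finally, smashing a split cofibration sequence of based $\S$-spaces with any $\S$-spectrum produces a split cofibration sequence of $\S$-spectra, and in the stable category such a sequence is equivalently a fibration sequence. The main subtlety to verify carefully will be the claim that the quotient $\tNcy(A)\onto\tNcy(A/I)$ literally respects the wedge decomposition (not just up to homotopy), which is exactly why reducing to the seminormal, radical case via Lemma \ref{lem:newsn} at the outset is essential; without it the map on index sets $A\to A/I$ can produce unwanted identifications among summands with $a\notin I$, and the clean splitting would be lost.
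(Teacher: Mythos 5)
Your proposal is correct and follows the same route as the paper: reduce to seminormal $A$ and radical $I$ via Lemma \ref{lem:newsn}, use the wedge decomposition $\tNcy(A)=\bigvee_a \Ncy(A\ad{a},a)$, and invoke Remark \ref{Ncy-modI} (applied to $A\ad{a}$ and $I\ad{a}$) to identify the summands with $a\notin I$ with those of $\tNcy(A/I)$, while the summands indexed by $a\in I$ collapse to the basepoint and give $\tOmega_{A,I}$. The one small blemish is the garbled equation ``$a^{k+1}=ja^{-k}\cdot a^{k+1}\in I$'' in the radicality argument; what you want is that $a/1=j/a^k$ in $A\ad{a}$ forces $a^{k+m+1}=a^m j\in I$ in $A$ for some $m$, whence $a\in\sqrt{I}=I$, a contradiction.
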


\begin{proof}
Set $J=\sqrt{IA_\sn}$. By the definition of $\tNcy(A)$ and
the above remarks, there is an $\S$-equivariant decomposition
\addtocounter{equation}{-1}
\begin{subequations}
\renewcommand{\theequation}{\theparentequation.\arabic{equation}}
\begin{equation}\label{eq:tOmega(A,I)}
\tOmega_{A,I} = \bigvee_{a \in J}|\Ncy(A_\sn\ad{a}, a)|,
\end{equation}
\end{subequations}
This immediately implies that 
$\tOmega_A\cong\tOmega_{A/I} \bigvee \tOmega_{A,I}$ equivariantly.
\end{proof}

Our next result shows that $\tOmega$ satisfies ``affine excision.''

\begin{prop} \label{cofibseq}

Suppose that $f: A \to B$ is a homomorphism of seminormal pc monoids,
$I\subset A$ and $I' \subset B$ are radical ideals such that
$f$ maps $I$ bijectively onto $I'$, and the induced map
$A\ad{a} \to B\ad{f(a)}$ is an isomorphism for all $a \in I$. Then
$$
\xymatrix{
\tOmega_A \ar[r] \ar[d] & \tOmega_{A/I} \ar[d] \\
\tOmega_{B} \ar[r] & \tOmega_{B/I'} \\
}
$$
induces a homotopy cartesian square of $\S$-spectra upon smashing with
any $\S$-spectrum.
\end{prop}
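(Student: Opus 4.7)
The plan is to reduce the claim to showing that the map between the ``kernel'' pieces $\tOmega_{A,I}\to\tOmega_{B,I'}$ is an equivalence, and then to verify this directly from the explicit wedge decomposition \eqref{eq:tOmega(A,I)}. The preceding lemma gives split cofibration sequences
\[
\tOmega_{A,I}\into\tOmega_{A}\onto\tOmega_{A/I},\qquad \tOmega_{B,I'}\into\tOmega_{B}\onto\tOmega_{B/I'}
\]
that remain cofibration sequences (and hence fibration sequences) after smashing with any $\S$-spectrum. Consequently, once we know the induced map on the left-hand terms is an $\S$-equivariant equivalence, the square in the statement becomes homotopy cartesian after smashing with any $\S$-spectrum, since the horizontal homotopy fibers then agree.

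Now I would analyze $\tOmega_{A,I}\to\tOmega_{B,I'}$ using \eqref{eq:tOmega(A,I)}. Because $A$ and $B$ are seminormal and $I$, $I'$ are radical, we have $A_\sn=A$, $B_\sn=B$, $\sqrt{IA_\sn}=I$ and $\sqrt{I'B_\sn}=I'$, so the decomposition reads
\[
\tOmega_{A,I}=\bigvee_{a\in I}|\Ncy(A\ad{a},a)|,\qquad \tOmega_{B,I'}=\bigvee_{b\in I'}|\Ncy(B\ad{b},b)|.
\]
The morphism induced by $f$ respects these wedge decompositions: the summand indexed by $a\in I$ is sent into the summand indexed by $f(a)\in I'$.

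Next I would invoke the two remaining hypotheses. Since $f$ restricts to a bijection $I\smap{\cong}I'$, the re-indexing is a bijection of wedge summands. Since the induced map $A\ad{a}\to B\ad{f(a)}$ is an isomorphism of monoids sending $a$ to $f(a)$, it induces an isomorphism of cyclic sets $\Ncy(A\ad{a},a)\smap{\cong}\Ncy(B\ad{f(a)},f(a))$, and therefore an $\S$-equivariant homeomorphism on geometric realizations. Assembling these summand-wise isomorphisms into the wedge yields an $\S$-equivariant homeomorphism $\tOmega_{A,I}\smap{\cong}\tOmega_{B,I'}$.

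Combining with the first paragraph, smashing this homeomorphism with any $\S$-spectrum produces an equivalence on horizontal fibers, which upgrades the square to a homotopy cartesian square of $\S$-spectra, as required. The proof is essentially bookkeeping once one uses the split cofibration sequence of the preceding lemma; there is no serious obstacle, the only subtlety being to verify that the hypotheses on $A,B$ and $I,I'$ allow the simplified form of \eqref{eq:tOmega(A,I)} without having to first seminormalize or take radicals.
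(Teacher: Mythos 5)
Your proof is correct and follows essentially the same route as the paper: both reduce the claim, via the split cofibration sequences $\tOmega_{A,I}\into\tOmega_A\onto\tOmega_{A/I}$ (and likewise for $B$), to showing that $\tOmega_{A,I}\to\tOmega_{B,I'}$ is an isomorphism, and both deduce this from the wedge decomposition \eqref{eq:tOmega(A,I)} together with the hypotheses that $f$ is bijective on $I$ and induces isomorphisms $A\ad{a}\to B\ad{f(a)}$. Your write-up is simply a more detailed version of the paper's two-line argument, including the correct observation that seminormality of $A,B$ and radicality of $I,I'$ let one use the decomposition without further seminormalization or radicals.
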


\begin{proof}
It suffices to prove that the induced map
$
\tOmega_{A,I} \to \tOmega_{B,I'}
$
is an isomorphism. This is immediate
from the description \eqref{eq:tOmega(A,I)} of these spaces
and the assumptions.
\end{proof}

\begin{ex}\label{ex:closedMV}
Suppose that $I, J$ are radical ideals of a seminormal pc
monoid $A$ such that $I \cap J =0$. Taking $B=A/J$ in
Proposition \ref{cofibseq}, we see that
$$
\xymatrix{
\tOmega_A \ar[r] \ar[d] & \tOmega_{A/I} \ar[d] \\
\tOmega_{A/J} \ar[r] & \tOmega_{A/(I \cup J)} \\
}
$$
induces a homotopy cartesian square of $\S$-spectra upon smashing with
any $\S$-spectrum.
\end{ex}
\goodbreak

\begin{cor} \label{conductsOmega}
Suppose $A$ is cancellative and seminormal. Let

$I \subset A$ be the conductor ideal of the inclusion $A \into A_\nor$.
Then
$$
 \xymatrix{
 \tOmega_A \ar[r] \ar[d] & \tOmega_{A/I} \ar[d] \\
 \tOmega_{A_\nor} \ar[r] & \tOmega_{A_\nor/I} \\
 }
$$
induces a homotopy cartesian square of $\S$-spectra upon smashing it
with any $\S$-spectrum.
 \end{cor}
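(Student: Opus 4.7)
The plan is to verify the hypotheses of Proposition \ref{cofibseq} with $B = A_\nor$ and $I' = I$, then invoke it directly. There are four things to check: (i) both $A$ and $A_\nor$ are seminormal pc monoids; (ii) $I$ is a radical ideal in both $A$ and $A_\nor$; (iii) the map $f: A \into A_\nor$ sends $I$ bijectively onto $I$; and (iv) $A\ad{a} \to A_\nor\ad{a}$ is an isomorphism for every $a \in I$.

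For (i), $A$ is seminormal by assumption and cancellative (hence pc). The normalization $A_\nor$ is normal, and Example \ref{group:sn} shows that normal monoids are seminormal; being a submonoid of the pointed group completion $A^+$, it is cancellative as well. For (ii), Lemma \ref{lem:cond} is exactly the statement that the conductor $I$ is radical in both $A$ and $A_\nor$. For (iii), the conductor ideal $I$ is by definition the same subset of $A$ and of $A_\nor$, and $f$ is the inclusion, so it acts as the identity on $I$.

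The only point requiring a small computation is (iv). Fix $a \in I$; we show $A\ad{a} \to A_\nor\ad{a}$ is an isomorphism. Injectivity is immediate since $A \into A_\nor$ is an injection and localization preserves injections. For surjectivity, let $\alpha/a^n \in A_\nor\ad{a}$ with $\alpha \in A_\nor$ and $n \ge 0$. Because $a$ lies in the conductor, $\alpha a \in A$, so
\[
\frac{\alpha}{a^n} \;=\; \frac{\alpha a}{a^{n+1}}
\]
lies in the image of $A\ad{a}$.

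Having verified the hypotheses, Proposition \ref{cofibseq} applied to $f:A\to A_\nor$ yields that the displayed square induces a homotopy cartesian square of $\S$-spectra after smashing with any $\S$-spectrum, which is the conclusion. I do not anticipate any real obstacle; the content of the corollary is essentially a repackaging of the affine excision proposition, with Lemma \ref{lem:cond} supplying the radicality of the conductor and the conductor condition $A_\nor \cdot I \subset A$ supplying the local isomorphism on stalks at points of $I$.
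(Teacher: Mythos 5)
Your proposal is correct and follows essentially the same route as the paper: radicality of the conductor comes from Lemma \ref{lem:cond}, the localization isomorphism $A\ad{a}\to A_\nor\ad{a}$ for $a\in I$ is verified by the same computation $\alpha/a^n=(\alpha a)/a^{n+1}$ with $\alpha a\in A$, and the conclusion follows from Proposition \ref{cofibseq}. Your extra verifications (seminormality of $A_\nor$, bijectivity on $I$, injectivity of the localized map) are correct and merely spell out what the paper leaves implicit.
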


\begin{proof}
By Lemma \ref{lem:cond}, $I$ is a radical ideal of $A$ and $A_\nor$.
For $a \in I$, the canonical map $A \ad{a} \to A_\nor \ad{a}$
is an isomorphism, since any element of $A_\nor \ad{a}$ can be written as
$x/a^n = (ax)/a^{n+1}$ and $ax\in A$.
Now the result follows from Proposition \ref{cofibseq}.
\end{proof}

\section{Zariski and $cdh$ descent}\label{sec:descent}

In this section, we introduce the technique of descent.
Functors on monoids can be promoted to presheaves on monoid schemes
using Zariski descent, and $cdh$ descent will be used to prove our
main result, Theorem \ref{MainTheorem} below.

For any  Grothendieck
topology $t$ on the category $\cMpctf$, we may impose a model
structure on the category of presheaves of spectra on $\cMpctf$ as follows:
A morphism $\cF \to \cF'$ of presheaves is a
cofibration provided $\cF(X) \to \cF'(X)$ is a cofibration of spectra
for all $X$. A morphism is a weak equivalence provided that,
for all $n \in \Z$, the $t$-sheafification of the induced map
of presheaves of abelian groups $\pi_n
\cF \to \pi_n \cF'$ is an isomorphism:
$$
\pi_n \cF(-)^\sim_t \map{\cong}
\pi_n \cF'(-)^\sim_t.
$$
As usual, fibrations are determined by the lifting property with
respect to trivial cofibrations. The model structure comes equipped
with a functorial fibrant replacement functor, which we fix.
Given a presheaf of spectra $\cF$ on $\cMpctf$, we write $\bH_{t}(-,\cF)$
for the fibrant replacement of $\cF$ for this model structure.
Thus, $\bH_t(-, \cF)$ is a fibrant object for this model
structure and there is a natural cofibration and weak equivalence
$\cF\rightarrowtail\bH_t(-,\cF)$.
We say $\cF$ satisfies {\em $t$-descent} if $\cF(X) \to \bH_t(X, \cF)$
is weak equivalence of spectra for all $X \in \cMpctf$.

The category of presheaves of spaces on a site is similarly equipped with a closed model structure; just
replace ``spectra" by ``space" everywhere.

All of the Grothendieck topologies we are interested in will be
determined by a {\em cd structure} \cite{VVcdh}, which is a
specified family of commutative squares
\begin{equation} \label{square}
\xymatrix{
D \ar[r]_f \ar[d]^q & Y \ar[d]_p  \\
C \ar[r]^e & X
}
\end{equation}
in $\cMpctf$. The associated topology is the minimal Grothendieck
topology such that $\{e,p\}$ is a covering sieve for every square in
the cd structure.
\goodbreak

For example, the Zariski topology on $\cMpctf$ may be defined as the
topology associated to the {\em Zariski cd structure}, which by
definition consists of all squares \eqref{square} for which $Y,C$ form
an open cover of $X$, $p$ and $e$ are the canonical inclusions, and $D
= C \times_X Y = C \cap Y$. Affine monoid schemes admit no
non-trivial covers in the Zariski topology, since $\MSpec A$ has a
unique closed point.

\begin{lem}\label{lem:promote}
For any affine $U \in \cMpctf$, and any presheaf of spectra $\cF$ on $\cMpctf$,
the morphism $\cF(U) \to \bH_\zar(U,\cF)$ is a weak homotopy equivalence.
\end{lem}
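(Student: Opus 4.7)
The plan is to exploit the fact that an affine monoid scheme $U = \MSpec A$ has essentially trivial Zariski topology, so that the fibrant replacement of any presheaf is computed by the presheaf itself at $U$.

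First I would show that $U$ admits no nontrivial Zariski covers. Recall that $A$ has a unique maximal ideal $\fm = A \setminus U(A)$ (the complement of the units), which is therefore the unique closed point of $U$. The open subsets of $\MSpec A$ are precisely the sets $U \setminus V(I) = \{\fp : I \not\subseteq \fp\}$ for ideals $I \subset A$, and such an open contains $\fm$ if and only if $I \not\subseteq \fm$, i.e., if and only if $I$ contains a unit, i.e., if and only if $I = A$. Hence the only Zariski open in $U$ containing $\fm$ is $U$ itself. Consequently, for any Zariski cover $\{U_\alpha \to U\}$ in $\cMpctf$, at least one $U_\alpha$ must contain $\fm$ and so must equal $U$. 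Therefore the only Zariski covering sieve on $U$ is the maximal sieve.

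Next I would turn this into the statement that fibrant replacement is trivial at $U$. The cleanest route uses the Brown--Gersten/Voevodsky description of the local model structure associated to a cd structure: a weak equivalence of presheaves of spectra $\cF \to \cF'$ is characterized by an isomorphism of sheafified homotopy presheaves $(\pi_n \cF)^{\sim}_{\zar} \cong (\pi_n \cF')^{\sim}_{\zar}$, and since the only covering sieve on $U$ is the maximal one, sheafification has no effect on the value at $U$. Thus from $\cF \rightarrowtail \bH_\zar(-, \cF)$ we get $\pi_n \cF(U) \cong \pi_n \bH_\zar(U, \cF)$ for all $n$, as desired. Equivalently, since the only Zariski hypercover of $U$ is refinable to the constant hypercover at $U$, the homotopy limit computing $\bH_\zar(U, \cF)$ collapses to $\cF(U)$.

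The only real subtlety is to make sure the abstract fibrant replacement at the object $U$ is genuinely computed by $\cF(U)$ rather than by some formally larger object; this is what the cd-structure machinery (or the explicit hypercover description) provides, given the observation above that every covering sieve on an affine contains the identity. I do not expect any serious obstacle beyond invoking this general machinery.
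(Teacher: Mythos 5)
Your proof is correct and is essentially the paper's argument: both hinge on the observation that the unique closed point of $\MSpec(A)$ has $U$ itself as its only open neighborhood, so that ``local equals global'' at an affine, and then invoke the definition of Zariski weak equivalence via sheafified homotopy presheaves. The paper packages this by noting that the stalk of $\pi_q\cF$ at the closed point is exactly $\pi_q\cF(U)$, while you phrase it via covering sieves and the plus construction; these are interchangeable formulations of the same step.
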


\begin{proof}
Let $x$ be the unique closed point of $U$. Then $U$ represents the stalk of
the monoid scheme $U$ at $x$. Since $\cF\to\bH_\zar(-,\cF)$ is a Zariski
weak equivalence, the stalks of the map $\pi_q\cF\to\pi_q\bH_\zar(-,\cF)$
are isomorphisms. But the stalk at $x$ is the map
$\pi_q \cF(U)  \map{\cong} \pi_q \bH_\zar(U, \cF)$.
Hence the map $F(U)\to\bH_\zar(U,\cF)$ is a weak homotopy equivalence.
\end{proof}
\begin{subrem}\label{rem:promote}
Lemma \ref{lem:promote} also holds for presheaves of spaces,
with the same proof.
\end{subrem}

We also have the $cdh$ cd structure on $\cMpctf$, which consists of
the squares in the Zariski cd structure together with all
{\em abstract blow-ups}.
The latter refers to a square \eqref{square}  such that
$p:Y\to X$ is proper (in the sense of \cite[8.4]{chww-monoid}),
$e$ is the inclusion of an equivariant, closed
subscheme $C$ into $X$, $D = C \times_X Y$, and the induced map $p: Y
\setminus D \to X \setminus C$ is an isomorphism.
The associated topology is called the $cdh$ topology on $\cMpctf$.

Let $\cF$ be a presheaf of spectra defined on $\cMpctf$. We say that
$\cF$ satisfies the {\em Mayer-Vietoris property} for a square \eqref{square}
if applying $\cF$ to the square results in a homotopy cartesian square
of spectra. If a cd structure satisfies certain technical conditions
(described in \cite{VVcdh}), then a presheaf $\cF$ has the Mayer-Vietoris
property for every square in the cd structure if and only if $\cF$ satisfies descent for the associated Grothendieck topology.

\begin{ex}\label{MV=descent}
(a) The Zariski cd structure satisfies the technical conditions
of \cite{VVcdh}. Thus
$\cF$ has Zariski descent if and only if it has the Mayer-Vietoris
property for every square in the Zariski cd structure;
see \cite[Sec.\,12]{chww-monoid}. Given any presheaf of spectra $\cF$ on $\cMpctf$,
$\bH_\zar(-, \cF)$ is a presheaf of spectra on $\cMpctf$ which
satisfies the Mayer-Vietoris property
for every square in the Zariski cd structure.

(b) The $cdh$ structure also satisfies the technical conditions
of \cite{VVcdh}; see \cite[12.9]{chww-monoid}.  Thus a presheaf of spectra
$\cF$ satisfies $cdh$ descent if and only if it has the
Mayer-Vietoris property for every square in the $cdh$ structure.
Given any presheaf of spectra $\cF$ on $\cMpctf$,
$\bH_\cdh(-, \cF)$ is a presheaf of spectra which
satisfies the Mayer-Vietoris property
for every square in the cdh structure.
\end{ex}

\begin{lem}\label{colim-descent}
Let $\{ F_i\}_{i\in I}$ be a filtering system of presheaves of spectra
on $\cMpctf$. Then the natural map of presheaves
$$
\colim\bH_\zar(-,F_i) \to \bH_\zar(-,\colim F_i)
$$
is an (object-wise) weak equivalence, and similarly for the $cdh$ topology.
If each of the presheaves $F_i$ satisfies Zariski descent (resp.,
$cdh$ descent) then so does the presheaf $F=\colim F_i$.
\end{lem}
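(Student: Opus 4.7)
The plan is to use the Mayer--Vietoris characterization of descent recorded in Example \ref{MV=descent} together with the key homotopical fact that, for spectra, filtered colimits commute with finite homotopy limits. Since the Zariski and $cdh$ cd structures on $\cMpctf$ consist of squares built from a fixed finite collection of schemes, applying a presheaf of spectra to such a square produces a square of spectra whose homotopy-cartesianness is preserved under filtered colimits.

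For the second statement, which is really the anchor of the argument, I would proceed as follows. Suppose each $F_i$ satisfies Zariski (resp.\ $cdh$) descent. By Example \ref{MV=descent}, it suffices to verify that $F = \colim_i F_i$ has the Mayer--Vietoris property for every square in the relevant cd structure. For such a square \eqref{square} we have
\[
F(X)\simeq\colim_i F_i(X),\quad F(Y)\simeq\colim_i F_i(Y),\quad F(C)\simeq\colim_i F_i(C),\quad F(D)\simeq\colim_i F_i(D),
\]
and each $F_i$-square is homotopy cartesian by assumption. Since filtered colimits of spectra commute with finite homotopy limits, the $F$-square is also homotopy cartesian. This gives Zariski (resp.\ $cdh$) descent for $F$.

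For the first statement, I would argue by uniqueness of fibrant replacement. The target $\bH_\zar(-,\colim_i F_i)$ satisfies Zariski descent by construction, and the source $\colim_i\bH_\zar(-,F_i)$ satisfies Zariski descent by applying the previous paragraph to the family $\{\bH_\zar(-,F_i)\}_i$. On the other hand, the natural map $\colim_i F_i\to\colim_i\bH_\zar(-,F_i)$ is a filtered colimit of Zariski local weak equivalences, hence a Zariski local weak equivalence: Zariski sheafification on $\cMpctf$ commutes with filtered colimits (the underlying spaces are finite, so the site is coherent in the required sense), and filtered colimits of isomorphisms of abelian presheaves are isomorphisms. Thus both sides of the map in question are fibrant replacements of $\colim_i F_i$, so the comparison map between them is an objectwise weak equivalence. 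The argument for the $cdh$ topology is identical, using that the $cdh$ cd structure on $\cMpctf$ also lives on a coherent site (as verified in \cite[12.9]{chww-monoid}), so $cdh$ sheafification commutes with filtered colimits as well.

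The main potential obstacle is the assertion that sheafification on $\cMpctf$ commutes with filtered colimits in the topologies considered. This rests on the finiteness of objects in $\cMpctf$ (every object has a finite underlying space and every cd-cover is finite), which ensures one may compute sheafification using a filtered system of refinements of finite covers; the filtered colimit then passes through the Čech-style plus construction. Once this point is in hand, the rest of the argument is formal.
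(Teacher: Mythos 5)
Your argument is correct, but it runs in the opposite direction from the paper's and uses a different toolkit. The paper proves the first statement first, citing \cite[Exp.\ VI, 2.11 and 5.2]{SGA4II} for commutation of sheaf cohomology with filtered colimits on Noetherian sites and then using the locally bounded cohomological dimension of the Zariski and $cdh$ sites so that the convergent descent spectral sequences transport this to the hypercohomology spectra; the descent statement then follows immediately because filtered colimits of spectra preserve weak equivalences. You instead prove the descent statement first, via the Mayer--Vietoris characterization of Example \ref{MV=descent} and the fact that filtered colimits of spectra commute with homotopy pullbacks, and then deduce the first statement by applying that step to the family $\{\bH_\zar(-,F_i)\}$ and comparing two presheaves that satisfy descent and receive local weak equivalences from $\colim_i F_i$. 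Two small caveats: the phrase ``both sides are fibrant replacements'' is loose, since a filtered colimit of fibrant presheaves need not be fibrant; what you actually need (and have) is that $\colim_i\bH_\zar(-,F_i)$ satisfies descent, that the comparison triangle under $\colim_i F_i$ commutes by naturality of the unit, and the standard fact that a local weak equivalence between presheaves satisfying descent is an objectwise equivalence. Also, your worry about sheafification commuting with filtered colimits is a non-issue---sheafification is a left adjoint and commutes with all colimits---while the genuine finiteness input in your route is the boundedness of the cd structures, which the paper supplies through Example \ref{MV=descent} and \cite[12.9]{chww-monoid} and which plays the role that bounded cohomological dimension plays in the paper's proof. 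Your route stays entirely within the cd-structure formalism and avoids the descent spectral sequence; the paper's route handles both statements with a single external citation.
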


\begin{proof}
The last sentence follows from the preceding one in light of the fact
that filtering colimits of spectra commute with weak
equivalences. That the fibrant replacement commutes (up to weak
equivalence) with filtering colimits is a consequence of the fact that
the Zariski and $cdh$ sites are Noetherian, see \cite[Exp. VI, 2.11
and 5.2]{SGA4II}. In {\em loc.\ cit.}, this is proved for ordinary
cohomology of sheaves; however, since the Zariski and $cdh$ sites have
locally bounded cohomological dimension the corresponding descent
spectral sequences converge to give the desired result for the
hypercohomology spectra $\bH_\zar(-,F)$ and $\bH_\cdh(-,F)$.
\end{proof}

We will need an alternative characterization of $cdh$ descent, which
is provided by the following proposition. Recall from
\cite[Sec.\,10]{chww-monoid}
that the underlying space of a monoid scheme of finite type is a
finite poset, and that the {\em height} of a point $x$ in a monoid
scheme $X$ is the largest integer $n$ such that there exists a
strictly decreasing chain $x\!=\! x_n\!>\!\cdots \!>\! x_0$ in the poset
underlying $X$. Recall also (\cite[6.4]{chww-monoid}) that $Y\in\cMpctf$ is
{\em smooth} if each stalk is the product of a free group of finite
rank and a free monoid of finite rank.

\begin{prop} \label{prop:cdhdescent}
Suppose $\cG$ is a presheaf of spectra on $\cMpctf$ with
$\cG(\emptyset)\!\sim\! *$.
Then $\cG$ satisfies $cdh$ descent if and only if it has the
Mayer-Vietoris property for every commutative square \eqref{square}
in $\cMpctf$ that satisfies one of the following conditions:
\begin{enumerate}

\item $C=D= \emptyset$ and $Y\to X$ is the semi-normalization of $X$.

\item $C$ and $Y$ form an open covering of $X$ and $D = C \cap Y$.

\item $X$ is cancellative and seminormal, $Y \to X$ is the
  normalization of $X$, $C$ is the subscheme cut out locally by the
  conductor ideals, and $D = C \times_Y X$.

\item $X$ is seminormal, $C$ and $Y$ are equivariant closed subschemes
  of $X$ that form a closed covering of $X$, and $D = C \times_X Y$.

\item $X$ and $Y$ are connected and normal, $p$ is proper and birational,
and $C$ and $D$ are the reduced, equivariant closed subschemes of $X$ and $Y$
determined by the closures of the height one points of each.
\end{enumerate}
\noindent
Finally, if $\cG$ has $cdh$ descent and $\cG(Y)$ is contractible
for all smooth $Y$ in $\cMpctf$, then $\cG(X)$ is contractible
for all $X$ in $\cMpctf$.
\end{prop}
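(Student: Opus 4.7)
The proof has two logically distinct parts: first, the equivalence of $cdh$ descent with Mayer--Vietoris for the five listed families of squares; second, the contractibility statement. My plan is to handle these in order and lean on the theory developed in \cite{chww-monoid}, particularly resolution of singularities for monoid schemes and the machinery of cd structures from \cite{VVcdh}.

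For the easier direction of the equivalence, I would verify that each of (1)--(5) is either a $cdh$ square or can be assembled from $cdh$ squares and the condition $\cG(\emptyset)\sim *$. Type (2) is Zariski by definition; type (5) is explicitly an abstract blow-up. For (3), the normalization $X_{\nor}\to X$ is finite by \cite[6.3]{chww-monoid} and an isomorphism away from the conductor subscheme, so this is an abstract blow-up. For (4), the map $Y\amalg C\to X$ exhibits the closed cover as an abstract blow-up along $C$, and combining with the trivial Zariski square for $Y\amalg C$ produces the stated Mayer--Vietoris condition. For (1), $X_{\sn}\to X$ is a finite birational morphism (Lemma \ref{lem:snfinite}) inducing a homeomorphism on underlying spaces (Lemma \ref{sn:homeo}); one realizes it as a $cdh$ cover by combining the abstract blow-up along the non-seminormal locus with itself iteratively over a Noetherian filtration, or by invoking the scdh-refinement of the $cdh$ topology constructed in \cite{chww-monoid}.

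The hard direction is a Noetherian induction on $X\in\cMpctf$ (using the descending chain condition on equivariant closed subschemes) to show that (1)--(5) force Mayer--Vietoris for every abstract blow-up $p\colon Y\to X$ along $C$. The reduction sequence I would carry out is: apply (1) to replace $X,C,Y,D$ by their seminormalizations (combined with Lemma \ref{lem:snscheme} to control $D$); apply (4) to decompose a seminormal $X$ along its equivariantly irreducible components, reducing to the case where $X$ is irreducible and hence cancellative; apply (3) to pass to the normalization of $X$, using the conductor square as a $cdh$ square and the inductive hypothesis applied to the conductor subscheme (which has strictly smaller height); this leaves the case where $X$ is connected and normal, i.e., toric in the sense of Example \ref{toric+pctf}. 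The main obstacle is closing the argument at this stage: I would dominate the given abstract blow-up $Y\to X$ by an equivariant resolution of singularities $\tilde X\to X$ produced in \cite{chww-monoid} (Bierstone--Milman style), factor it through (5) with $C\cup p(\text{exceptional locus})$ playing the role of the height-one closure subscheme, and invoke the inductive hypothesis on the strictly smaller subschemes appearing in the resulting Mayer--Vietoris squares. This is precisely the place where one needs the full strength of the companion paper's resolution results.

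For the final assertion, assume $cdh$ descent holds and $\cG(Y)\sim *$ for smooth $Y$. Proceed by Noetherian induction on $X$. If $X$ is smooth there is nothing to prove. Otherwise, invoke resolution of singularities from \cite{chww-monoid} to produce a proper birational morphism $\tilde X\to X$ with $\tilde X$ smooth, isomorphic over some dense open $X\setminus C$ where $C$ is an equivariant closed subscheme of strictly smaller height, and set $D=C\times_X\tilde X$, so $D$ also has strictly smaller height. The $cdh$ Mayer--Vietoris square identifies $\cG(X)$ with the homotopy pullback of $\cG(\tilde X)\to\cG(D)\leftarrow\cG(C)$. The term $\cG(\tilde X)$ is contractible by hypothesis, and $\cG(C),\cG(D)$ are contractible by the inductive hypothesis, so $\cG(X)$ is contractible as well.
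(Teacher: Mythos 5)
Your easy direction is essentially right in outline, although two points need repair: a type (1) square is not itself a $cdh$ square, and the paper's argument for it is an induction on Krull dimension using the abstract blow-up of $X$ along the closure $C$ of its height-one points together with Lemma \ref{lem:snscheme} (so that $D_\red\to C$ is again a seminormalization and the reduction square handles $D$ versus $D_\red$); and there is no ``scdh refinement'' of the $cdh$ topology on monoid schemes in \cite{chww-monoid} to invoke. The genuine gap is in the hard direction. Your plan is to prove the Mayer--Vietoris property directly for \emph{every} abstract blow-up square by Noetherian induction, but the closing step fails: after reducing to $X$ connected and normal (toric), the scheme $Y$ in the given abstract blow-up is still an arbitrary pctf scheme of finite type, whereas condition (5) applies only when \emph{both} $X$ and $Y$ are connected and normal and $C$, $D$ are exactly the height-one closures; the resolution results of \cite{chww-monoid} resolve toric monoid schemes (fans), not an arbitrary $Y$, so there is no smooth domination $\tilde X\to Y\to X$ to appeal to, and even granting one, MV for $\tilde X\to X$ and $\tilde X\to Y$ does not formally yield MV for $Y\to X$. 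The paper avoids this in two steps you are missing: first it upgrades (5) to squares of type ($5'$) --- $X,Y$ connected and normal, $p$ proper and an isomorphism off an \emph{arbitrary} reduced equivariant closed $C$ --- by induction on the number of points of $X\setminus C$, using closed coverings by irreducible components together with (1), (4), (5); second, instead of attacking arbitrary abstract blow-ups, it compares $\cG$ with $\bH_\cdh(-,\cG)$: by \cite[12.12, 12.13]{chww-monoid}, MV for Zariski and smooth blow-up squares already gives $\cG(Y)\simeq\bH_\cdh(Y,\cG)$ for smooth $Y$, and one then shows the homotopy fiber of $\cG\to\bH_\cdh(-,\cG)$ is contractible on all of $\cMpctf$ by induction on dimension, using (1), (3), (4) and ($5'$) after subdividing the fan. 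This fibrant-replacement route is what makes the arbitrary $Y$ tractable.

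Your proof of the final assertion has the same defect in a sharper form: you invoke a proper birational resolution $\tilde X\to X$ with $\tilde X$ smooth for an arbitrary $X\in\cMpctf$, but no such resolution exists (and ``birational'' is not even available) when $X$ is non-reduced, reducible, or has non-cancellative stalks. One must first run the reduction chain: replace $X$ by $X_\sn$ using (1), decompose along the closures of the minimal points using (4) (these components are cancellative and seminormal by Lemma \ref{lem:snq} and Proposition \ref{prop:pc}), pass to the normalization using (3), and only then, with $X$ toric, subdivide the fan to produce a smooth $Y$ and a reduced equivariant closed $C$ with $\dim C<\dim X$; the Mayer--Vietoris input needed for that last square is of type ($5'$), not (5), which is another reason the ($5'$) extension cannot be skipped.
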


\begin{proof}
The proof will proceed in a number of steps which we outline now.
Step 1 shows that $cdh$ descent for a sheaf $\cG$ implies the
Mayer-Vietoris property for the types of squares listed in the statement;
the hard case is a square of type (1).
Step 2 demonstrates that if $\cG$ satisfies the Mayer-Vietoris
property for squares of types (1) through (5),
then it satisfies the Mayer-Vietoris property for a larger class
of squares that includes in particular equivariant blow-ups of monoid
schemes associated to fans; this step is the technical core of the proof.
Step 3 proves that $\cG$ satisfies $cdh$ descent
by induction on the dimension of $X$.
In the process, the final assertion
of the proposition will also be proved.

\noindent{\bf Step 1:} We first assume that $\cG$ satisfies $cdh$ descent and show
it has the Mayer-Vietoris property for each of the five types of squares.
Squares of type (2), (3) and (4) belong to the cd structure defining the $cdh$
topology and hence $\cG$ has the Mayer-Vietoris property
for each of them by \ref{MV=descent}.
Additionally, the abstract blow-up square \eqref{square} with $C=X_\red$
and $Y=D=\emptyset$ belongs to the $cdh$ structure, so
$\cG(X)\to\cG(X_\red)$ is a weak equivalence for all $X$.
For $X,Y,C,D$ as in a square of type (5),
the cartesian square given by $X,Y,C$ and $C \times_X Y$ is an
abstract blow-up and hence, again by \ref{MV=descent}, 
$\cG$ has the Mayer-Vietoris property for it. Since $D = (C
\times_X Y)_\red$, it follows that $\cG$ has the Mayer-Vietoris for
squares of type (5).

It remains to establish the Mayer-Vietoris property for squares satisfying
(1). This is equivalent to showing that
$\cG(X) \to \cG(X_\sn)$ is an equivalence for all $X$.
We proceed by induction on the Krull dimension of $X$.
Since we have seen that $\cG(X) \to \cG(X_\red)$ is an equivalence,
we may assume that $X$ is reduced.

If $\dim(X) = 0$, then $X=X_\red$ is the disjoint union of spectra
of the pointed monoids associated to abelian groups.
Hence $X_\sn=X$, and we are done.

For $\dim(X)>0$, let $C$ denote the reduced, closed equivariant
subscheme of $X$ determined by the closure of the (finitely many)
height one points of $X$. By Lemma \ref{lem:snfinite} the map
$X_\sn \to X$ is finite, and induces an isomorphism over $X\setminus C$.
Since $X, X_\sn, C$ and $D=C\times_X X_\sn$ form an abstract blow-up square,
$\cG$ has the Mayer-Vietoris property for this square,
and it suffices to show that $\cG(C)\to\cG(D)$ is a weak equivalence.
By Lemma \ref{lem:snscheme}, the map $D_\red \to C$ is the
seminormalization of $C$. Since $\dim(C) < \dim(X)$, the map
$\cG(C)\to\cG(D_\red)$ is a weak equivalence by the induction hypothesis.
Since $\cG(D)\to\cG(D_\red)$ is a weak equivalence, so is $\cG(C)\to\cG(D)$.

\noindent{\bf Step 2:} Now we assume that $\cG$ has the Mayer-Vietoris property for each of
the five types of squares appearing in the statement and prove that $\cG$ has the
Mayer-Vietoris property for any square \eqref{square} of type ($5'$),
which we define to be one that satisfies the following conditions:

\smallskip
\noindent($5'$) {\it $X$ and $Y$ are connected and normal,
$C$ is any reduced equivariant closed subscheme,
$D=C\times_X Y$, $p$ is proper and the induced map
$Y\setminus D \to X \setminus C$ is an isomorphism.}

\smallskip
To prove the Mayer-Vietoris property for such squares,
we proceed by induction on the number of points of $X\setminus C$.
If this number is $0$, there is nothing to prove since
$C=X$ and $D=Y$. If this number is $1$, then $X\setminus C$ is the
generic point of $X$ and $X, Y, C, D_\red$ form
a square of type (5); by assumption, $\cG$ has the Mayer-Vietoris
property for this square, and we are done since $D_\red=C_\sn$ by
Lemma \ref{lem:snscheme}, and
$\cG(D)\simeq \cG(D_\red)\simeq \cG(C)$ by (1).

In general, choose a maximal point $\eta$ of $X\setminus C$ and
let $C'$ denote the reduced equivariant closed subscheme
of $X$ defined by the closure of the points of $C \cup \{\eta\}$.
Setting $D' = C'\times_X Y$, we have the diagram
$$
\xymatrix{
D \ar[r] \ar[d] & D' \ar[r] \ar[d] & Y \ar[d] \\
C \ar[r] & C' \ar[r]& X.
}
$$
By the induction hypothesis, $\cG$ has the Mayer-Vietoris property
for the right-hand square. Thus it suffices to prove $\cG$ has
the Mayer-Vietoris property for the left-hand square. The scheme
$C'$ might be only partially cancellative, but it is always seminormal
(by Lemma \ref{lem:snscheme}), $\eta$ is a generic point of $C'$ and
each of the irreducible components of $C'$ is normal (by \ref{prop:pc}).
Let $C'_1$ be the irreducible component of $C'$ with generic point
$\eta$, and let $C_2$ denote the union of the other irreducible components
of $C'$. Then we have closed coverings $C'=C'_1\cup C_2$ and
$C=C_1\cup C_2$, where $C_1=C\cap C'_1$. It follows from (4) that
the right square and outer square in the following diagram
are homotopy cartesian.
$$
\xymatrix{
\cG(C')   \ar[r] \ar[d] & \cG(C)\ar[r] \ar[d] & \cG(C_2) \ar[d] \\
\cG(C'_1) \ar[r]        & \cG(C_1) \ar[r] & \cG(C_1\cap C_2)
}
$$
It follows that $\hofi(\cG(C')\to \cG(C))\sim \hofi(\cG(C'_1)\to \cG(C_1))$.

Since $Y\setminus D\cong X\setminus C$, there is a
unique point $\tilde{\eta}$ in $D'$ mapping to $\eta$. If $D'_1$ is
the irreducible component of $D'$ with generic point $\tilde{\eta}$
and $D_2$ is the union of the other components, then we have similar
closed coverings $D'=D'_1\cup D_2$ and $D=D_1\cup D_2$ where
$D_1=D\cap D'_1$. By the same argument, it follows from (4) that
$\hofi(\cG(D')\to \cG(D))\sim \hofi(\cG(D'_1)\to \cG(D_1))$.
Hence it suffices to show that
\[
\xymatrix{\cG(C'_1)\ar[d]\ar[r]& \cG(C_1)\ar[d]\\
          \cG(D'_1)\ar[r]& \cG(D_1)}
\]
is homotopy cartesian; this follows from (5).
\goodbreak

\noindent{\bf Step 3:}
We now assume that $\cG$ has the Mayer-Vietoris property for
all squares of type (1)--(4) and ($5'$).
In particular, $\cG$ has the Mayer-Vietoris property for
all Zariski squares as well as all squares \eqref{square} in which
$X$ is smooth, $C$ is an equivariant smooth closed subscheme and
$Y$ is the blow-up of $X$ along $C$
(these are called {\it smooth blow-up squares}).
Thus by \cite[12.12 and 12.13]{chww-monoid} we have
$$
\cG(Y) \map{\sim} \bH_\cdh(Y, \cG)
$$
for all smooth $Y$ in $\cMpctf$. That is, the homotopy fiber $\cF$
of the map $\cG \to \bH_\cdh(-,\cG)$ satisfies
$\cF(Y) \sim *$ for all smooth $Y$.
Since $\cG$ and (by what we have already proven)  $\bH_\cdh(-, \cG)$ have
the Mayer-Vietoris property for squares of type (1)--(4) and ($5'$),
so does $\cF$.

Thus it suffices to prove that if $\cF$ is a presheaf of spectra on
$\cMpctf$ that has the Mayer-Vietoris property for squares (1)--(4) and
($5'$), and is such that $\cF(Y)$ is contractible for all smooth $Y$
in $\cMpctf$, then $\cF(X)$ is contractible for all $X$ in $\cMpctf$.
In light of what we already proved, this will also establish the final assertion of the proposition.

We proceed by induction on the dimension of $X$.
Since $\cF(X) \map{\sim} \cF(X_\sn)$ by (1), we may assume $X$ is
seminormal. In particular, the closures of the minimal points of $X$
form a closed covering of $X$ by cancellative and seminormal
equivariant closed subschemes (using Lemma \ref{lem:snq} and Proposition
\ref{prop:pc}). Using the Mayer-Vietoris property for squares of type (4),
and the inductive assumption, we may
assume $X$ itself is connected, seminormal and cancellative. Using the
Mayer-Vietoris property for normalization squares of type (3), and
the induction hypothesis, we may assume $X$ is also normal.
By Example \ref{toric+pctf},   
$X$ is the toric monoid scheme associated to a fan.
In this case, by subdividing this fan, we may form an abstract blow-up square
$$
\xymatrix{
D \ar[r] \ar[d] & Y \ar[d] \\
C \ar[r] & X}
$$
such that $Y$ is smooth and $C$ is a reduced, equivariant closed
subscheme with $\dim(C) < \dim(X)$. Using the assumption that  $\cF(Y)$
is contractible, the inductive hypothesis that $\cF(C)$ and $\cF(D)$
are contractible and the Mayer-Vietoris property for squares of type ($5'$),
we conclude that $\cF(X)$ is contractible.
\end{proof}

It will be useful to weaken the hypotheses of Proposition
\ref{prop:cdhdescent} even further. Recall that each $X$ in $\cMpctf$
is separated, so that (by \cite[3.8]{chww-monoid})
the intersection of affine open subschemes is again affine open.

\begin{lem} \label{lem:cdhdescent}
Suppose $\cG$ is a presheaf of spectra on $\cMpctf$ with
$\cG(\emptyset) \sim *$. If $\cG$ satisfies Zariski descent, and
$\cG$ has the Mayer-Vietoris property for all squares of type
(1), (3), (4) and (5) in Proposition \ref{prop:cdhdescent}
which satisfy the additional condition that $X$ is affine, then $\cG$
satisfies $cdh$ descent.
\end{lem}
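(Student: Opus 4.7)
The strategy is to apply Proposition \ref{prop:cdhdescent}: since $\cG(\emptyset)\sim *$ is given, it suffices to verify that $\cG$ has the Mayer--Vietoris property for every square of types (1)--(5) in that Proposition. Type (2) is exactly the Mayer--Vietoris property for squares in the Zariski cd structure, which coincides with Zariski descent by Example \ref{MV=descent}, so it is part of the hypothesis. For types (1), (3), (4), and (5) we have only the affine case by hypothesis, and the task is to bootstrap from the affine case to the general case using Zariski descent.

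The bootstrap is the standard Zariski-gluing argument. Fix a square $\Sigma$ of one of the four types, say with vertices $(D,Y,C,X)$. Define two presheaves of spectra $F$ and $G$ on the small Zariski site of $X$ by
\[
F(U)=\cG(U),\qquad G(U)=\cG(C\times_X U)\times^{h}_{\cG(D\times_X U)}\cG(Y\times_X U).
\]
The Mayer--Vietoris property for $\Sigma$ is precisely the assertion that the natural map $F(X)\to G(X)$ is a weak equivalence. Since $\cG$ has Zariski descent, so does $F$; and $G$ has Zariski descent because it is a finite homotopy limit of three presheaves on $X$, each of which is the pullback of $\cG$ along a map $U\mapsto Y\times_X U$, $C\times_X U$, or $D\times_X U$ and hence inherits Zariski descent from $\cG$ (homotopy limits commute with homotopy limits). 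By Lemma \ref{lem:promote}, applied to both $F$ and $G$, it suffices to verify that $F(U)\to G(U)$ is a weak equivalence for every affine open $U\subseteq X$.

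The final ingredient is that each of the constructions defining the four types of squares is local on $X$ in the Zariski topology: for type (1), $X_\sn\times_X U\cong U_\sn$ by Lemma \ref{lem:snloc2}; for type (3), normalization and formation of the conductor ideal commute with Zariski localization by Remark \ref{conductorsheaf} and the proof of Lemma \ref{Asn-finite}; for type (4), closed coverings restrict to closed coverings; for type (5), properness, birationality, and closures of height-one points all restrict compatibly. Thus for an affine open $U\subseteq X$, the restricted square $(D\times_X U,Y\times_X U,C\times_X U,U)$ is an affine square of the same type as $\Sigma$, and $F(U)\to G(U)$ is a weak equivalence by the hypothesis.

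\textbf{Main obstacle.} The one point that requires a little care is type (5), where the scheme $Y\times_X U$ may fail to be connected even though $Y$ is. I would handle this by writing $Y\times_X U=\bigsqcup_j Z_j$ as a disjoint union of connected normal components and using that $\cG$ splits on disjoint unions (which follows from applying Zariski descent and $\cG(\emptyset)\sim *$ to the trivial Zariski square $D=\emptyset$, $Y=Z_j$, $C=\bigsqcup_{i\ne j}Z_i$). Exactly one component $Z_{j_0}$ dominates $U$ birationally (the one containing the preimage of the dense open on which $p$ is an isomorphism); the remaining components map properly into $C\times_X U$, and their contribution to $G(U)$ is absorbed by applying the Mayer--Vietoris property for affine type~(4) squares, reducing us to the genuine type~(5) square on $U$ with the single component $Z_{j_0}$. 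Once this decomposition is in place, the bootstrap argument above gives the desired Mayer--Vietoris property for $\Sigma$ and hence, by Proposition \ref{prop:cdhdescent}, $cdh$ descent for $\cG$.
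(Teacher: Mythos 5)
Your proposal is correct and follows essentially the same route as the paper's proof: both reduce the Mayer--Vietoris property for a fixed square of type (1), (3), (4) or (5) to a Zariski-local statement on $X$ (the restriction of such a square to an affine open $U\subset X$ is again a square of the same type with affine base, so the affine hypothesis applies, and Zariski descent propagates the equivalence from affine opens to all of $X$). The only divergence is your ``main obstacle'' for type (5), which is in fact vacuous --- $Y\times_X U$ is a nonempty open subscheme of the connected normal monoid scheme $Y$, hence contains its generic point and is itself irreducible, connected and normal --- so the restricted square is literally of type (5) and the component-splitting workaround (whose absorption step via type-(4) squares is in any case not clearly justified) is unnecessary.
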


\begin{proof}
It suffices to prove $\cG$ has the Mayer-Vietoris property for all
squares \eqref{square} of type (1) and (3)--(5) in
Proposition \ref{prop:cdhdescent}.
Let $\square_X$ denote any such square, and
for an open subscheme $U \subset X$, let
$\square_U$ denote the pullback of $\square_X$ along $U \into
X$. Write $\cG(\square_U)$ for the result of applying $\cG$
to this square and taking iterated homotopy fibers.
The hypotheses imply that $\cG(\square_U)$ is contractible for any
affine open $U$, and we need to prove that
$\cG(\square_X)$ is contractible.

Since $\cG$ has the Mayer-Vietoris property for Zariski squares,
given $X = U \cup V$, the spectra $\cG(\square_X)$,
$\cG(\square_U)$, $\cG(\square_V)$, $\cG(\square_{U \cap V})$
fit together to form a homotopy cartesian square.
(This may be seen by consideration of the evident 4-dimensional cube
of spectra and properties of homotopy fibers.)
This shows that $U \mapsto \cG(\square_U)$ has the
Mayer-Vietoris property on $X_\zar$ for open covers. Since
$\cG(\square_U) \sim *$ for all affine $U$, it follows that
$\cG(\square_X) \sim *$.
\end{proof}

\section{Descent for $\tOmega$ and dilated $TC$}\label{sec:dilateddescent}

The main goal of this section is to prove Corollary \ref{Cor624} below,
that the functors $X\mapsto TC^n(X_k;p)^\fc$ satisfy $cdh$ descent.
We begin by considering the presheaf $\tOmega$.

\medskip
\paragraph{\bf Descent for $\tOmega$}
In Definition \ref{def:tOmega-A} we introduced a covariant functor
$\tOmega$ from monoids to $\S$-spaces.
We promote it to a contravariant functor from the category of monoid
schemes to $\S$-spaces by the formula
$$
(X,\cA) \mapsto \tOmega_{\cA(X)}.
$$
By Remark \ref{rem:promote}, if $X=\MSpec(A)$ is affine then the natural map
$$\tOmega_A\to\bH_\zar(X,\tOmega_{\cA})$$ is a weak homotopy equivalence.

\begin{defn}\label{def:OmegaTr}
For an $\S$-spectrum $T$ the $\S$-equivariant smash product
$\tOmega_{\cA}\smsh T$ is a presheaf of $\S$-spectra on $\cMpctf$.
We set $\tOmega^{T,0}=\tOmega_{\cA}\smsh T$
and for integers $r\ge1$, we write $\tOmega^{T,r}$ for the
presheaf of fixed-point spectra $U\mapsto(\tOmega_{\cA(U)}\smsh T)^{C_r}$
on $\cMpctf$. We consider the fibrant replacements
$\bH_\zar(-, \tOmega^{T,r})$.
\end{defn}

\begin{subrem}\label{R728}
For any affine pctf scheme $X=\MSpec(A)$, Lemma \ref{lem:promote}
implies that    $\tOmega_A\smsh T\to\bH_\zar(X,\tOmega_{\cA}\smsh T)$
and $(\tOmega_{A}\smsh T)^{C_r}\to\bH_\zar(X,\tOmega_{\cA}^{T,r})$
are weak equivalences.
\end{subrem}

\begin{thm} \label{MT2p}
For  any $\S$-spectrum $T$ and integer $r\ge0$, the presheaf of spectra
$\bH_\zar(-, \tOmega^{T,r})$
satisfies $cdh$ descent on $\cMpctf$.
\end{thm}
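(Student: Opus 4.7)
My plan is to apply Lemma \ref{lem:cdhdescent}. Zariski descent holds for the presheaf $\bH_\zar(-,\tOmega^{T,r})$ tautologically, so it suffices to verify the Mayer--Vietoris property for squares of types (1), (3), (4) and (5) from Proposition \ref{prop:cdhdescent} in the special case that $X$ is affine. When $X=\MSpec(A)$ is affine, Remark \ref{R728} lets me replace $\bH_\zar(X,\tOmega^{T,r})$ by $(\tOmega_A\smsh T)^{C_r}$, so the task is to show that appropriate squares of $C_r$-fixed-point spectra are homotopy cartesian.

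The first three cases should be short. Type (1) is immediate from the definition $\tOmega_A:=|\tNcy(A_\sn)|=\tOmega_{A_\sn}$. For types (3) and (4), Corollary \ref{conductsOmega} and Example \ref{ex:closedMV} already supply homotopy cartesian squares of $\S$-spectra after smashing with $T$. Taking $C_r$-fixed points preserves this property, since the $C$-fixed-point functor on $\S$-spectra preserves homotopy fiber sequences for any closed subgroup $C\subset\S$. This reduces the theorem to verifying the Mayer--Vietoris property for a square of type (5) whose base $X=\MSpec(A)$ is affine.

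The hard case is therefore (5): $X=\MSpec(A)$ is an affine toric monoid scheme (by Example \ref{toric+pctf}), $p\colon Y\to X$ is a proper birational modification, and $C\subset X$, $D\subset Y$ are the reduced equivariant subschemes cut out by the closures of the height-one points. Here $Y$ need not be affine, so $\bH_\zar(Y,\tOmega^{T,r})$ must genuinely be computed from a cover of $Y$ by affine opens coming from a fan subdivision of the cone defining $A$. Following Danilov's strategy, I would analyze $\tOmega_A\smsh T$ one lattice element at a time via the $\S$-equivariant wedge decomposition
\[
\tOmega_A=\bigvee_{a\in A_\sn}|\Ncy(A_\sn\ad{a},a)|.
\]
By Lemma \ref{Ncy(free)} each summand is $\S$-equivariantly equivalent to a pointed torus determined by the smallest face of the cone containing $a$, and its $C_r$-fixed subspace is again a (possibly lower-dimensional) torus recorded by the divisibility of the exponents of $a$ by $r$. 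Under a fan subdivision, the face of the cone containing a given $a$ is refined in a controlled, purely combinatorial way, and the Zariski cover of $Y$ by affines indexed by the subdivision decomposes each contribution accordingly. The Mayer--Vietoris square then reduces, summand by summand, to a combinatorial identity for the nerve of the covering of the relevant face-poset by the cones of the subdivision, smashed with $T$ and then passed through $C_r$-fixed points.

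The main obstacle is this final combinatorial analysis in (5): one must match the $C_r$-fixed-point tori appearing in $(\tOmega_A\smsh T)^{C_r}$ with those computed from the Zariski cover of $Y$, keeping careful track of which lattice elements $a$ lie in $C$ versus $X\setminus C$ (equivalently, $D$ versus $Y\setminus D$), and verify that the resulting square is homotopy cartesian after smashing with the arbitrary $\S$-spectrum $T$. Once this combinatorial bookkeeping is in place, the homotopy cartesian property follows formally from the fact that $Y\setminus D\to X\setminus C$ is an isomorphism, so that the contributions from $a$'s avoiding $C$ cancel between the two rows.
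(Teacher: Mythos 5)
Your handling of cases (1), (3) and (4) matches the paper exactly (the paper likewise disposes of them via the definition $\tOmega_A=\tOmega_{A_\sn}$, Corollary \ref{conductsOmega}, Example \ref{ex:closedMV} and Lemma \ref{lem:promote}), and reducing to affine $X$ via Lemma \ref{lem:cdhdescent} is legitimate, although for case (5) it buys nothing since $Y$ is still non-affine. The problem is that case (5) is the entire content of the theorem, and your sketch of it has a genuine gap. The final claim --- that once the bookkeeping is set up, ``the homotopy cartesian property follows formally from the fact that $Y\setminus D\to X\setminus C$ is an isomorphism, so that the contributions from $a$'s avoiding $C$ cancel'' --- cannot be right as stated: if Mayer--Vietoris for an abstract blow-up square followed formally from $Y\setminus D\cong X\setminus C$, every presheaf would satisfy $cdh$ descent. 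The cancellation away from $C$ is precisely what has to be proved, and it is not summand-by-summand in your indexing: on the $X$-side the wedge is indexed by $a\in A_\sn$, while on the $Y$-side the sections over an affine open $V_{\sigma'}$ of the subdivision are indexed by the (larger, $\sigma'$-dependent) monoid $(\sigma'^\vee\cap M)_*$, so there is no direct matching of tori, and indeed the map on absolute terms $\bH_\zar(X,\tOmega^{T,r})\to\bH_\zar(Y,\tOmega^{T,r})$ is not an equivalence.

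What makes the paper's argument work, and what is missing from your plan, is the following chain: first one passes to the relative term, using the split cofibration sequence $\tOmega_{\cA,\cI}\to\tOmega_{\cA}\to\tOmega_{\cA/\cI}$ of Proposition \ref{cofibseq} (this is where seminormality enters) to reduce to showing $\bH_\zar((X,C),\tOmega^{T,r})\to\bH_\zar((Y,D),\tOmega^{T,r})$ is an equivalence; second, the key computation that for any $m\in\cI(U)\setminus\{0\}$ one has $\cA_X(U)[-m]=M_*$, because such an $m$ is strictly positive on $|U|\setminus\{0\}$ --- so every relative wedge summand is the \emph{constant} object $(|\Ncy(M_*,m)|\smsh T)^{C_r}$, independent of the open set and of the fan, and the relative presheaf becomes a wedge over $m\in M$ of two-valued presheaves ($E$ or $*$ according to whether $m>0$ on $|U|\setminus\{0\}$); third, Lemma \ref{L312}, which identifies the Zariski hypercohomology of such a presheaf with $\Maps_*\bigl(|\Delta|/(C(m)\cap|\Delta|),E\bigr)$, a spectrum depending only on the support $|\Delta|$, which is unchanged under subdivision. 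Your ``combinatorial identity for the nerve of the face-poset covering,'' which you yourself flag as the main obstacle, is exactly steps two and three, and without the relative reduction and the observation that inverting an interior element yields all of $M$, it is not clear it can be carried out. A further caution: you propose to ``match the $C_r$-fixed-point tori appearing in $(\tOmega_A\smsh T)^{C_r}$,'' but the genuine fixed points of a space smashed with an $\S$-spectrum are not computed from the fixed points of the space; the paper never computes them, carrying the functor $(-\smsh T)^{C_r}$ along formally through equivalences and cofibration sequences of $\S$-spaces, and you should do the same.
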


\begin{proof}
It suffices to check that $\bH_\zar(-, \tOmega^{T,r})$ has the
Mayer-Vietoris property for each of the five types of squares
\eqref{square} listed in Proposition \ref{prop:cdhdescent}.  Case
(2) holds by construction, since $\bH_\zar(-, \tOmega^{T,r})$
satisfies Zariski descent. For squares of type (1), (3)--(5), we may
assume $X$ is affine by Lemma \ref{lem:cdhdescent}; say $X=\MSpec
A$. Case (1) is immediate, since $X_{sn}$ and $X$ are homeomorphic
and $\tOmega_A = \tOmega_{A_\sn}$ by definition. In particular
$\bH_\zar((-)_\red, \tOmega^{T,r})\sim\bH_\zar(-, \tOmega^{T,r})$,
so we may restrict to proving the remaining cases when all schemes
in each of the squares are reduced. Cases (3) and (4) follow from
Corollary \ref{conductsOmega} and Example \ref{ex:closedMV} using
Lemma \ref{lem:promote}, since in these cases $C$, $Y$ and $D$ are
affine when $X$ is.

For case (5), assuming that $X$ is affine does not simplify the
argument, so we do not assume it. As pointed out in
Example \ref{toric+pctf}, the hypotheses imply that $X$ is a
toric monoid scheme, i.e., that $X$ is the monoid scheme associated to a
lattice $N$ and a fan $\Delta$ in $N_\R$ and $Y$ is the monoid
scheme associated to a subdivision $\tDelta$ of $\Delta$; see
\cite[4.3, 8.16]{chww-monoid}. The underlying space of $X$ is the poset of
cones in the fan $\Delta$, and $\{0\}$ is the (open) minimal point
of $X$. Moreover, $C$, $D$ are the reduced, equivariant closed
subschemes associated to the closed subsets $\Delta \setminus \{0\}$
and $\tDelta\setminus\{0\}$. As a matter of notation, for $\sigma
\in \Delta$, write $U_\sigma \subset X$ for the corresponding affine
open subscheme; by construction \cite[4.2]{chww-monoid}, $\cA_X(U_\sigma) =
(\sigma^\vee \cap M)_*$, where $M := \Hom_{\Z}(N, \Z)$.

Following Danilov \cite{Dan1479}, our next goal,
achieved in \ref{eq:Danilov-fibration}, is to understand
the homotopy fiber of
$$\bH_\zar(X,\tOmega^{T,r})\to\bH_\zar(C,\tOmega^{T,r}).
$$
Let $\cI \subset \cA_X$ denote the quasi-coherent
sheaf of ideals cutting out $C$ in $X$. We have
$\cI(U_\sigma) = \{m\in M\,|\, m>0\text{ on }\sigma \setminus \{0\} \}$.
Using that $\cI$ and $\cA_X$ are sheaves, it follows that for every
open subscheme $U$ of $X$, we have
$$
\cA_X(U) = \{m \in M \, | \, \text{$m \geq 0$ on $|U|$} \}_*
$$
and
$$
\cI(U) = \{m \in M \, | \, \text{$m > 0$ on $|U| \setminus \{0\}$} \}_* ,
$$
where $|U|$ denotes the closed subset of $N_\R$ given as union of the
cones in the set $U\subset X = \Delta$.
For example, if $U=\{0\}$ then $|U|=\{0\}$ and $\cI(U)=\cA(U)=M$.

Let $X_\zar$ be the category whose objects are the open monoid subschemes
of $X$ and whose morphisms are the inclusions.
We have the presheaf of spectra on $X_\zar$,
$$
U \mapsto \left( \tOmega_{\cA_X(U), I(U)} \smsh T \right)^{C_r},
$$
where $\tOmega_{A,I}$ is defined in \ref{def:tOmega(A,I)},
and we define the spectrum
$$
\bH_\zar((X,C), \tOmega^{T,r}) := \bH_\zar\left(X, U \mapsto
\left( \tOmega_{\cA_X(U), I(U)} \smsh T \right)^{C_r}\right).
$$
There is a sequence of maps of presheaves of spectra on $X_\zar$
given on an open $U \subset X$ as
\addtocounter{equation}{-1}
\begin{subequations}
\renewcommand{\theequation}{\theparentequation.\arabic{equation}}
\begin{equation}\label{eq:presheaf-fibration}
\left( \tOmega_{\cA_X(U),    I(U)} \smsh T \right)^{C_r}
\to \left( \tOmega_{\cA_X(U)} \smsh T \right)^{C_r}
\to \left( \tOmega_{\cA_X(U)/  I(U)} \smsh T \right)^{C_r}.
\end{equation}
Since each $\cA(U)$ is seminormal,
$\tOmega_{\cA_X,I}\to\tOmega_{\cA_X}\to\tOmega_{\cA_X/I}$ is a
presheaf of cofibration sequences by Proposition \ref{cofibseq},
which implies that \eqref{eq:presheaf-fibration} is a fibration sequence
for each $U$. Applying $\bH_\zar(X, -)$ therefore yields
a fibration sequence of spectra
\begin{equation}\label{eq:Danilov-fibration}
\bH_\zar((X,C), \tOmega^{T,r})
\to \bH_\zar(X, \tOmega^{T,r}) \to \bH_\zar(C, \tOmega^{T,r}).
\end{equation}
This fibration is natural in $(X,C)$.
Replacing $(X,C)$ by $(Y,D)$ gives an analogous fibration sequence 
and a map
\begin{equation} \label{E630}
\bH_\zar((X,C), \tOmega^{T,r}) \to \bH_\zar((Y,D), \tOmega^{T,r}).
\end{equation}
To prove Theorem \ref{MT2p} it remains to prove the map \eqref{E630}
is a weak equivalence.

Our next goal, achieved in \eqref{eq:m-factor}, is to
decompose the map \eqref{E630} into a wedge sum of maps indexed by $M$.
By \eqref{eq:tOmega(A,I)}, we have a decomposition
into presheaves of $\S$-spaces on $X_\zar$:
\begin{equation} \label{E630b}
\tOmega_{\cA_X(U), \cI(U)} = \bigvee_{m \in \cI(U)}
|\Ncy(\cA_X(U)[-m], m)|.
\end{equation}
\noindent (Because we are using additive notation, we write $A[-m]$
instead of $A\ad{m}$.)

We claim that if $m \in \cI(U)\setminus\{0\}$, then $\cA_X(U)[-m]=M_*$.
To see this, note that for each $\sigma \in U$, we have $m>0$ on
$|\sigma| \setminus \{0\}$ and hence (provided $\sigma \ne \{0\}$),
$m$ lies in the interior of $\sigma^\vee$
and thus $(\sigma^{\vee} \cap M)[-m] = M$.
That is, for each $l \in M$,
there is a positive integer $N_\sigma$ such that $l + N_{\sigma} m \in
\sigma^\vee$. Since $U$ consists of a finite number of cones, we may
find a positive integer $N$ such that $l + N m \in \sigma^\vee$ for all
$\sigma \in U$ and hence $l + Nm \in \cA_X(U)$.
This shows $l\in\cA_X(U)[-m]$.

Using this and $|\Ncy(\cA_X(U),0)|=*$, \eqref{E630b} becomes
the $\S$-equivariant decomposition
$$
\tOmega_{\cA_X(U), \cI(U)} = \bigvee_{m \in M} G_m^N(U),
$$
where $G_m^N$ is the presheaf of $\S$-spaces on $X_\zar$ defined by
$$
G_m^N(U) =
\begin{cases}
|\Ncy(M_*,m)|, & \text{if $m > 0$ on } |U| \setminus \{0\}, \\
\ast, & \text{otherwise.}
\end{cases}
$$
Since $(\bigvee G_m^N)\smsh T \map{\sim} \bigvee(G_m^N\smsh T)$ and
$(\bigvee G_m^N\smsh T)^{C_r}=\bigvee(G_m^N\smsh T)^{C_r}$, it follows that
$\bH_\zar((X,C),\tOmega^{T,r})=\bH_\zar(X,\bigvee(G_m^N\smsh T)^{C_r})$.
Set $G_m=(G_m^N\smsh T)^{C_r}$.

By Lemma \ref{colim-descent},  $\bigvee \bH_\zar(-,G_m)$ satisfies
Zariski descent and
\[
\bH_\zar((X,C), \tOmega^{T,r}) = \bH_\zar(X,\bigvee G_m)
\map{\sim} \bigvee_{m \in M}\!\!\bH_\zar(X,G_m)
\]
is a weak equivalence.
Similarly, we have an equivalence
\[
\bH_\zar((Y,D),\tOmega^{T,r}) = \bH_\zar(X,\bigvee G_m) \map{\sim}
\bigvee_{m\in M}\!\!\bH_\zar(Y,G_m)
\]
The evident square commutes, so the map \eqref{E630} decomposes as a
wedge sum of maps
\begin{equation}\label{eq:m-factor}
\bH_\zar(X,G_m) \longrightarrow \bH_\zar(Y,G_m).
\end{equation}
Thus it suffices to show that for all $m\in M$, the map \eqref{eq:m-factor}
is an equivalence. This is done in Lemma \ref{L312} below, with
$E=(|\Ncy(M,m)|\smsh T)^{C_r}.$
\end{subequations}
\end{proof}

Before stating the lemma that was used in the above proof, we
introduce a simple construction which we will use.

\begin{construction}\label{const:C(m)}
Fix a lattice $N$, an isomorphism $N\cong\Z^n$ and an element
$m$ of $\Hom(N,\R)$. We set
\[
C(m) := \{x \in N_\R | \, m(x) \leq 0, \|x\| \ge1\} \subset N_\R.
\]
For each fan $\Delta$ in $N_{\R}$, with underlying cone
$|\Delta|\subseteq N_\R$, we let $K(\Delta)$ denote the quotient
$K(\Delta)=|\Delta|/(C(m)\cap|\Delta|)$. By convention,
if $|\Delta|$ is disjoint from $C(m)$, then $K(\Delta)$ is
$|\Delta|$ with a disjoint basepoint adjoined.  If $\Delta$ is affine,
then $|\Delta|$ is a strongly convex rational polyhedral cone,
so $K(\Delta)$ is contractible (because the intersections of both
$|\Delta|$ and $C(m)\cap|\Delta|$ with any sphere are convex on the
sphere). If $\Delta$ is the union of two fans $\Delta_1$ and $\Delta_2$
then $K(\Delta_1)$ and $K(\Delta_2)$ form a closed cover of $K(\Delta)$
whose intersection is $K(\Delta_1\cap\Delta_2)$; one can even choose
a CW structure on $|\Delta|$ so that the $|\Delta_i|$ and the
$C(m)\cap|\Delta_i|$ are subcomplexes and the cover is cellular.
\end{construction}

We now state and prove the lemma that was used in the proof of
Theorem \ref{MT2p}.

\begin{lem} \label{L312}
Let $X=(\Delta,N)$ be a toric monoid scheme and let $Y=(\Delta',N)$
be another toric monoid scheme with the same lattice and such that
$\Delta'$ is a refinement of $\Delta$. Let $f: Y\to X$ be the
associated morphism of monoid schemes (see \cite[4.2]{chww-monoid}).
Fix a (non-equivariant) spectrum $E$ and an element $m \in M=\Hom(N,\R)$.
Let $G_m^E$ denote the presheaf of spectra on $X_\zar$ (and $Y_\zar$,
respectively) defined by
$$
U \mapsto G_m^E(U) =
\begin{cases}
E & \text{if $m>0$ on $|U| \setminus\{0\}$}
\\ * & \text{otherwise.} \\
\end{cases}
$$
Then the map $f^*: \bH_\zar(X,G_m^E) \to \bH_\zar(Y,G_m^E)$
induced by $f$ is a weak equivalence.
\end{lem}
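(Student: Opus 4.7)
The plan is to identify $\bH_\zar(X, G_m^E)$ with a mapping spectrum $\Maps(K(\Delta), E)$, where $K(\Delta)$ is the pointed space of Construction \ref{const:C(m)} and $\Maps$ denotes the pointed mapping spectrum. Since $\Delta'$ is a refinement of $\Delta$, we have $|\Delta'| = |\Delta|$, and consequently $K(\Delta') = K(\Delta)$ canonically; the induced map $\Maps(K(\Delta), E) \to \Maps(K(\Delta'), E)$ is then the identity, which yields the desired equivalence.

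The base case is when $\Delta$ consists of a single maximal cone $\sigma$ together with its faces, so $X = U_\sigma$ is affine. By Lemma \ref{lem:promote}, $\bH_\zar(X, G_m^E) \simeq G_m^E(X)$, which equals $E$ when $|\sigma| \cap C(m) = \emptyset$ and $*$ otherwise. On the geometric side, $K(\sigma)$ is either $|\sigma|_*$ (homotopy equivalent to $S^0$, giving $\Maps(K(\sigma), E) \simeq E$), or is contractible as a pointed space by Construction \ref{const:C(m)} (giving $\Maps(K(\sigma), E) \simeq *$); the two computations agree in each case.

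For the inductive step, given a non-affine $\Delta$, extract a single maximal cone $\sigma$, set $\Delta_2 = \{\tau : \tau \leq \sigma\}$ and $\Delta_1 = \Delta \setminus \{\sigma\}$; then $\Delta_1, \Delta_2$ are subfans with $\Delta = \Delta_1 \cup \Delta_2$, and the corresponding monoid schemes $X_1, X_2 \subset X$ form a Zariski open cover with intersection $X_{\Delta_1 \cap \Delta_2}$. Zariski Mayer-Vietoris (Example \ref{MV=descent}(a)) gives a homotopy pullback square comparing $\bH_\zar(X, G_m^E)$ with the analogous spectra over $X_1, X_2, X_{\Delta_1 \cap \Delta_2}$. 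In parallel, Construction \ref{const:C(m)} records a pushout decomposition $K(\Delta) = K(\Delta_1) \cup_{K(\Delta_1 \cap \Delta_2)} K(\Delta_2)$ with compatible CW structures, which yields a homotopy pullback after applying $\Maps(-, E)$. Induction on the number of cones in $\Delta$ then finishes the comparison.

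The main technical obstacle is the bookkeeping around the disjoint-basepoint convention of Construction \ref{const:C(m)}: when some of $|\Delta_i|$ or $|\Delta_1 \cap \Delta_2|$ misses $C(m)$, the basepoints in the various $K(\Delta_i)$ and $K(\Delta)$ do not automatically line up. I would finesse this either by working throughout with unpointed geometric realizations and writing $\Maps(Z_*, E)$, or by choosing a CW structure on $|\Delta|$ (as indicated at the end of Construction \ref{const:C(m)}) in which each $|\Delta_i|$ and each $C(m) \cap |\Delta_i|$ is a subcomplex, so that the pointed pushout formula becomes a transparent identity of subcomplex quotients.
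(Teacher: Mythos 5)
Your overall strategy coincides with the paper's: compare $\bH_\zar(-,G_m^E)$ with mapping spectra out of the spaces $K(-)$ of Construction \ref{const:C(m)}, using that $K(U)$ is contractible for affine $U$ whose support meets $C(m)$, that $K(U)\simeq S^0$ otherwise, that $\Maps_*(-,E)$ has descent for closed cellular covers, and that $K(\Delta)=K(\Delta')$ because a refinement has the same support. The difference is only organizational: the paper packages the right-hand side as a presheaf $F(U)=\Maps_*(K(U),E)$ on $X_\zar$ and $Y_\zar$, proves $F$ satisfies Zariski descent, and shows a map $G_m^E\to F$ is an equivalence on affines, while you propose an induction on the number of cones.

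The genuine gap is that you never construct a comparison map; you only verify abstract equivalences of corners. In your inductive step you have two homotopy cartesian squares --- one from Zariski Mayer-Vietoris for $\bH_\zar(-,G_m^E)$, one from applying $\Maps_*(-,E)$ to the closed cellular decomposition of $K(\Delta)$ --- together with equivalences at three of the four pairs of corners; but without a map of squares (a natural transformation compatible with the restriction maps of both constructions) those equivalences need not assemble to an equivalence at the fourth corners, so ``induction finishes the comparison'' is not yet justified. The same issue recurs at the end: even granting $\bH_\zar(X,G_m^E)\simeq\Maps_*(K(\Delta),E)$ and the analogue for $Y$, you must know that these identifications intertwine $f^*$ with the identity of $\Maps_*(K(\Delta),E)=\Maps_*(K(\Delta'),E)$, which again requires naturality, not just objectwise agreement. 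The repair is exactly the paper's device: define the morphism of presheaves $G_m^E\to \Maps_*(K(-),E)$ by the collapse map $K(U)\to S^0$ when $m>0$ on $|U|\setminus\{0\}$ and by the basepoint inclusion otherwise. This map is an objectwise equivalence on affine opens, so your induction (now carried out with an honest map of Mayer-Vietoris squares), or simply Zariski descent for both presheaves, gives the identification; and since the presheaf $U\mapsto\Maps_*(K(U),E)$ on $X_\zar$ literally equals its pushforward from $Y_\zar$, compatibility with $f^*$ then comes for free from functoriality of fibrant replacement.
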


\begin{proof}
We give 
an explicit description (up to weak equivalence) of the presheaves
$\bH_\zar(-,G_m^E)$ whose global sections only depend on $|\Delta|$.

For $U$ an open in $X$ (or $Y$), we regard $U$ as a fan in $N_\R$
and write $F(U)$ for $\Maps_*(K(U),E)$,
where $K(U)$ is defined in Construction \ref{const:C(m)} and
$\Maps_*(K,E)$ denotes the function spectrum associated to a
pointed space $K$ and a spectrum $E$.
Because $K(U)$ is a contravariant functor of $U$,
$F$ is a presheaf on $X$, and on $Y$.
Then we assert:

(a)
$F(X)\to\bH_\zar(X,F)$ is a weak equivalence, and similarly for $Y$;

(b) $\bH_\zar(X,G_m^E) \map{\sim} \bH_\zar(X,F)$ is an equivalence of spectra,
 and similarly for $Y$;

(c) the following diagram commutes:
$$
\xymatrix{
\bH_\zar(X, G_m^E) \ar[r]^{\phantom{X}\sim} \ar[d]^{f^*} &\bH_\zar(X,F)\ar[d]
\\
\bH_\zar(Y, G_m^E) \ar[r]^{\phantom{X}\sim} &\bH_\zar(Y,F).}
$$
Since $F(X) = F(Y)$ by the very definition of $F$,
this will prove the lemma.

To establish assertion (a), it suffices (by \ref{MV=descent})
to show that the presheaf $F$ has the Mayer-Vietoris property for open covers.
If $U = V \cup W$ is an open cover, we saw in  \ref{const:C(m)} that
$K(V)$ and $K(W)$ form a closed cellular cover of $K(U)$.
%
%
Since $\Maps_*(-,E)$ has descent for closed cellular covers (by the
homotopy extension property for embeddings of subcomplexes), we see that
$F$ has the Mayer-Vietoris property, establishing (a).

For (b), we define a morphism $G_m^E\to F$ of presheaves of spectra
on $X$ (or $Y$, respectively) by the following formula:
when $C(m) \cap |U| \ne \emptyset$ so that $G_m^E(U) = *$, it
is the inclusion of the basepoint; when $C(m)\cap|U|=\emptyset$, it is
the map
$$
G_m^E(U) = E = \Maps_*(S^0, E) \to \Maps_*(|U|/\emptyset, E) = F(U)
$$
induced by the continuous function $|U|/\emptyset \onto
{1}/\emptyset = S^0$ that sends $|U|$ to ${1}$.

We claim that $G_m^E(U)\to F(U)$ is a homotopy equivalence
for all affine $U$ in $X_\zar$. Indeed,
if $C(m)\cap|U|=\emptyset$ then the pointed space $K(U)$
is homotopy equivalent to $S^0$, and the claim is clear.
On the other hand, If $C(m)\cap|U|\ne\emptyset$ then $G_m^E(U)=*$;
when $U$ is affine, the space $K(U)$
is contractible, as noted in \ref{const:C(m)},
and $F(U)$ is contractible. This establishes the claim.

The claim implies that $G_m^E\to F$ is (locally in the Zariski
topology) a weak equivalence of presheaves of spectra, and hence
\[
\bH_\zar(X, G_m^E) \map{\sim}\bH_\zar(X,F)
\]
is also a weak equivalence of spectra (and similarly for $Y$),
which is assertion (b).

Note that the presheaves $F$ and $f_* F$ on $X_\zar$ are, in fact, equal.
Assertion (c) is an immediate consequence of the
functoriality of fibrant replacements.
\end{proof}

\medskip
\paragraph{\bf Descent for dilated $TC$}
Given a commutative ring $k$, the $k$-realization $X_k$ of a
monoid scheme $X$ is a scheme over\ $\Spec(k)$, constructed in
\cite[5.3, 5.9]{chww-monoid}. It is functorial in $X$, and if $X=\MSpec A$ is
affine, then $X_k = \Spec(k[A])$. If $X$ is the toric monoid scheme
associated to a fan $(N,\Delta)$ as in Example \ref{toric+pctf},
and $k$ is a field,
then $X_k$ is the usual toric $k$-variety associated to this fan.

Recall from \eqref{def:TR(k)} that $TR^n(-;p)$ is a covariant functor
from commutative rings to $\S$-spectra. We promote this to a presheaf
$TR^n(\cO;p)$ on schemes by sending $(Y,\cO_Y)$ to
$TR^n(\cO_Y(Y);p)$. Following Geisser and Hesselholt \cite[3.3]{GH97},
we define the presheaf $TR^n(-;p)$ on schemes by
$$
TR^n(Y;p)  = \bH_{\zar/k}(Y,TR^n(\cO;p)),
$$
where $\bH_{\zar/k}$ is the fibrant replacement for
presheaves defined on schemes of finite type over $k$.
By \cite[3.3.3]{GH97}, $TR^n(S;p) \sim TR^n(\Spec S;p)$
for any ring $S$.

Composing with the $k$-realization functor 
turns $TR^n(-;p)$ into a presheaf on the category of monoid schemes.
Given an open covering of a separated monoid scheme $X$ by $U$ and $V$,
applying the $k$-realization functor yields an open covering of $X_k$
by $U_k$ and $V_k$, with $U_k\cap V_k=(U\cap V)_k$; see \cite[5.3]{chww-monoid}.
Using \ref{MV=descent}(a), it follows that
$TR^n(-;p)$ satisfies Zariski descent on $\cMpctf$.
Given an infinite sequence $\fc$ of integers $\ge2$, the presheaf
$TR^n(-;p)^\fc$ also satisfies Zariski descent by
Lemma \ref{colim-descent}.

\begin{thm} \label{Thm625}
Let $k$ be a ring and $\fc$ an infinite sequence of
integers $c_1, c_2, \dots$ with $c_i \geq 2$ for all $i$.
Then for all primes $p$ and integers $n \geq 1$, the presheaf
$$
X \mapsto TR^n(X_k;p)^\fc
$$
satisfies $cdh$ descent on $\cMpctf$.
\end{thm}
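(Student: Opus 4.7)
The plan is to reduce the claim, via the Hesselholt-Madsen theorem and the Comparison Theorem \ref{CT}, to Theorem \ref{MT2p} applied with $T = TH(k)$ and $r = p^{n-1}$. The key is first to identify, on affine monoid schemes, the presheaf $TR^n((-)_k; p)^\fc$ with a presheaf built from $\tOmega^{TH(k), p^{n-1}}$, and then to use Lemma \ref{lem:cdhdescent} to promote $cdh$ descent from the affine/cd-structure level to all of $\cMpctf$.

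I would first handle the affine case $X = \MSpec A$. The corollary of Theorem \ref{thm:HM} supplies a natural equivalence
\begin{equation*}
TR^n(k[A]; p)^\fc \sim \left(TH(k) \smsh |\tNcy(A)^\fc|\right)^{C_{p^{n-1}}}.
\end{equation*}
Since $\tNcy$ commutes with filtered colimits, and $A^\fc \to (A_\sn)^\fc$ is an isomorphism by Lemma \ref{Ac=Asnc}, one may identify $|\tNcy(A)^\fc|$ with $|\tNcy(A_\sn)^\fc| = \tOmega_A^\fc$. Because $C_{p^{n-1}}$ is finite and fixed points under finite groups commute with sequential homotopy colimits of spectra, pulling the colimit outside yields
\begin{equation*}
TR^n(k[A]; p)^\fc \sim \colim\nolimits_\fc \left(TH(k) \smsh \tOmega_A\right)^{C_{p^{n-1}}} = \colim\nolimits_\fc \tOmega^{TH(k), p^{n-1}}(X).
\end{equation*}

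Let $\cG$ denote the presheaf on $\cMpctf$ defined by $X \mapsto \colim_\fc \tOmega^{TH(k), p^{n-1}}(X)$. By Theorem \ref{MT2p} together with Lemma \ref{colim-descent}, the Zariski fibrant replacement $\bH_\zar(-, \cG)$ satisfies $cdh$ descent; by Remark \ref{R728} and preservation of weak equivalences by filtered colimits, the natural map $\cG(X) \to \bH_\zar(X, \cG)$ is a weak equivalence for affine $X$. Combined with the previous step, this yields a natural zig-zag of weak equivalences identifying $TR^n((-)_k; p)^\fc$ with $\bH_\zar(-, \cG)$ on affine monoid schemes.

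To conclude, I apply Lemma \ref{lem:cdhdescent} to $\cF := TR^n((-)_k; p)^\fc$: since $\cF$ is already known to satisfy Zariski descent (as noted just before the theorem statement), it suffices to verify the Mayer-Vietoris property on squares of types (1), (3), (4), (5) with $X$ affine; but in those cases all four corners of the square are affine, so the Mayer-Vietoris property for $\cF$ follows from that of $\bH_\zar(-, \cG)$ via the affine zig-zag above. I expect the main obstacle to be the bookkeeping for the naturality of these identifications — one must ensure the Hesselholt-Madsen equivalence and the Comparison Theorem are natural enough in the monoid $A$ to carry the homotopy cartesian squares for $\bH_\zar(-,\cG)$ back to homotopy cartesian squares for $\cF$; fortunately both constructions are functorial in $A$, so this transfer goes through.
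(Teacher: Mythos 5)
Your affine identification is essentially the paper's argument (Hesselholt--Madsen plus the Comparison Theorem, commuting $(-)^\fc$ past $\smsh$ and $(-)^{C_{p^{n-1}}}$, then Theorem \ref{MT2p} and Lemma \ref{colim-descent}), but the final step contains a genuine error. In Lemma \ref{lem:cdhdescent}, a square of type (5) with $X$ affine does \emph{not} have all four corners affine: there $p\colon Y\to X$ is a proper birational map of connected normal monoid schemes, and the relevant examples are exactly subdivisions of the fan of an affine toric monoid scheme (e.g.\ blow-ups), for which $Y$ --- and hence $D$ --- is not affine. (The paper's proof of Theorem \ref{MT2p} says explicitly that for case (5) assuming $X$ affine does not simplify anything; the Danilov-style fibration \eqref{eq:Danilov-fibration} and Lemma \ref{L312} exist precisely to deal with non-affine $Y$.) Consequently your claim that ``the Mayer--Vietoris property for $\cF$ follows from that of $\bH_\zar(-,\cG)$ via the affine zig-zag'' fails for type (5) squares: your comparison gives you nothing at the corners $Y$ and $D$, so the transfer of homotopy cartesianness breaks exactly where the real work lies.

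The gap is repairable inside your own setup, and the repair is what the paper does: since $\cF = TR^n((-)_k;p)^\fc$ satisfies Zariski descent and your zig-zag consists of maps of presheaves on all of $\cMpctf$ that are weak equivalences on every affine (hence Zariski-locally), it induces an equivalence $\cF(X)\simeq \bH_\zar\bigl(X,(\tOmega^{TH(k),p^{n-1}})^\fc\bigr)$ for \emph{every} $X\in\cMpctf$, not only affine ones. With that global identification in hand, $cdh$ descent for $\cF$ is inherited directly from Theorem \ref{MT2p} together with Lemma \ref{colim-descent}, and no appeal to Lemma \ref{lem:cdhdescent} is needed at all; invoking that lemma for $\cF$ is both unnecessary and, as written, unjustified at type (5).
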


\begin{proof}
Since $TR^n(-;p)^\fc$ satisfies Zariski descent on $\cMpctf$,
the Hesselholt-Madsen Theorem \ref{thm:HM} implies that
there is an equivalence
$$
\begin{aligned}
TR^n(X_k;p)^\fc & \sim \bH_\zar(X,TR^n(-;p)^\fc) \\
& \sim_{\cF}
\bH\left(X, U \mapsto
\left(TH(k) \smsh |\Ncy(\cA_X(U))|^\fc\right)^{C_{p^{n-1}}}    \right). \\
\end{aligned}
$$
Here we have used the fact that the filtered colimit $(-)^\fc$
commutes with finite limits such as $(-)^{C_r}$ and with smashing
with a spectrum $T$. By Theorem \ref{CT} and Definition
\ref{def:tOmega-A}, we have a natural equivalence of $\S$-spectra
$|\Ncy(A)|^\fc \simeq |\tNcy(A)_\sn|^\fc = \tOmega_A^\fc$. Replacing
$A$ by $\cA_X(U)$, smashing with $TH(k)$, taking $C_r$-fixed points
with $r=p^n$ and then applying $\bH_\zar$, we obtain the equivalence
$$
\begin{aligned}
TR^n(X_k;p)^\fc & \sim
\bH_\zar\left(X,(TH(k)\smsh\tOmega_{\cA}^\fc)^{C_r}\right) \cong
\bH_\zar\left(X, (\tOmega^{TH(k),r})^\fc\right).
\end{aligned}
$$
(The final $\cong$ uses Definition \ref{def:OmegaTr}.)
By Theorem \ref{MT2p}, $\bH_\zar(-,\tOmega^{T,r})$ satisfies $cdh$ descent.
The result now follows from Lemma \ref{colim-descent}.
\end{proof}

\begin{cor} \label{Cor624}
For any ring $k$ and integer $n\ge1$, the spectrum-valued functor
$$
X \mapsto TC^n(X_k;p)^\fc
$$
satisfies $cdh$ descent on $\cMpctf$.
\end{cor}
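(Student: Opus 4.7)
The plan is to deduce this corollary from Theorem \ref{Thm625} combined with two essentially formal observations about spectra and presheaves.

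First, I would recall that by definition $TC^n(R;p)$ is the homotopy equalizer of the Restriction and Frobenius maps $TR^n(R;p) \rightrightarrows TR^{n-1}(R;p)$, so as presheaves on $\cMpctf$,
\[
TC^n(X_k;p) \simeq \hofi\bigl(TR^n(X_k;p) \twomaps TR^{n-1}(X_k;p)\bigr),
\]
where the right-hand side denotes the homotopy equalizer of the two structure maps (regarded as the homotopy fiber of their difference). Next, since $(-)^\fc$ is a sequential (hence filtered) colimit of spectra, it commutes with finite homotopy limits such as homotopy equalizers, yielding a natural equivalence of presheaves
\[
TC^n(X_k;p)^\fc \simeq \hofi\bigl(TR^n(X_k;p)^\fc \twomaps TR^{n-1}(X_k;p)^\fc\bigr).
\]

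Then I would invoke Theorem \ref{Thm625} to conclude that each of the presheaves $X \mapsto TR^n(X_k;p)^\fc$ and $X \mapsto TR^{n-1}(X_k;p)^\fc$ satisfies $cdh$ descent on $\cMpctf$. The last step is to observe that $cdh$ descent is preserved by homotopy equalizers (and more generally by finite homotopy limits) of presheaves of spectra. This can either be seen model-theoretically, since the $cdh$-fibrant replacement $\bH_\cdh(-,\cF)$ commutes with homotopy limits, or checked directly via the Mayer--Vietoris criterion of Proposition \ref{prop:cdhdescent}: if $\cF_1$ and $\cF_2$ both have the Mayer--Vietoris property for a given $cdh$ square, so does their homotopy equalizer, because homotopy pullbacks of spectra commute with homotopy equalizers.

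The only point requiring a little care is the commutation of the filtered colimit $(-)^\fc$ with the defining homotopy equalizer; this is the standard fact that sequential colimits of spectra commute with finite homotopy limits (one may replace the colimit by the homotopy-equivalent mapping telescope, as is done throughout Section \ref{sec:compare}, after which the commutation with the equalizer is clear). Everything else is formal given Theorem \ref{Thm625}, and in fact no obstacle of substance remains: the technical core of the argument is packaged into Theorem \ref{MT2p} and Theorem \ref{Thm625}, and this corollary is the last step.
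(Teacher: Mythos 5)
Your argument is correct and is essentially identical to the paper's proof: both express $TC^n$ as the homotopy equalizer of Restriction and Frobenius, commute the sequential colimit $(-)^\fc$ past this finite homotopy limit, apply Theorem \ref{Thm625} to each $TR$-term, and close with the observation that $cdh$ descent is preserved by homotopy pullbacks. You spell out the commutation of the filtered colimit with the equalizer somewhat more explicitly than the paper does, but this is the same proof.
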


\begin{proof}
Recall that $TC^n(X_k;p)$ is the homotopy equalizer of the two maps
\[
TR^n(X_k; p) \twomaps TR^{n-1}(X_k; p)
\]
given by restriction and Frobenius. We may identify
$TC^n(X_k;p)^\fc$ as either the colimit of the sequence of
endomorphisms of the spectra $TC^n(X_k;p)$ by the map $\theta_{c_i}$
or as the homotopy equalizer of the induced maps from
$TR^n(X_k; p)^\fc$ to $TR^{n-1}(X_k; p)^\fc$.
Since a homotopy pullback of presheaves satisfying $cdh$ descent
satisfies $cdh$ descent, the assertion now follows from Theorem \ref{Thm625}.
\end{proof}

\section{The Dilation theorem in characteristic $0$}\label{sec:char.0}

\def\HH{\operatorname{HH_\cdot}}
\def\HHcy{\operatorname{HH^{cy}_\cdot}}

In our previous paper \cite{chww-toric}, we proved that the canonical map
$$
\cK(X_k)^\fc \to \cKH(X_k)^\fc
$$
is a weak equivalence of spectra whenever
$k$ is a field of characteristic zero and $X$ is a toric monoid scheme
(so that $X_k$ is a toric variety).
In this section, by using the results developed in this paper, we
extend this result slightly to include all $X$ in
$\cMpctf$. (Our result here also applies to regular rings $k$
containing $\Q$.) In the next section, we
will prove this result in the more difficult case when the
characteristic of $k$ is positive.

For a commutative ring $k$ we shall write $H_k$ for the
Eilenberg-Mac\,Lane spectrum associated to $k$.
Given a $k$-algebra $R$, we shall write $\HH(R/k)$ for the
generalized Eilenberg-Mac\,Lane spectrum associated
to the standard Hochschild complex
for the $k$-algebra $R$.
Thus $\HH(-/k)$ is a covariant functor from $k$-algebras to spectra such
that $\pi_n(\HH(R/k))$ is $HH_n(R/k)$, the $n$-th Hochschild homology group
of the $k$-algebra $R$.
Recall from \eqref{eq:H(N)=HH} that,
for any monoid $A$ and ring $k$, we have a natural weak equivalence
of spectra
$$
\Ncy(A) \smsh H_k \sim k[\Ncy(A)] = \HH(k[A]/k),
$$
and a natural isomorphism
$
H_q(\Ncy(A), k) \cong HH_q(k[A]/k).
$

As $\Ncy(A) \smsh H_k$ is a functor from monoids to spectra, we may
promote it to a functor from monoid schemes to spectra by sending $(X,\cA)$
to $\Ncy(\cA(X)) \smsh H_k$. Let us write this functor as $\HH(k[\cA])$.
Taking fibrant replacements for the Zariski topology, we obtain the
presheaf of spectra
$$
X \mapsto \bH_\zar(X,\HH(k[\cA]))
$$
defined on monoid schemes. 
If $X = \MSpec(A)$ is an affine monoid scheme, the above remarks show
there is a natural weak equivalence of spectra
$$
\HH(k[A]/k) \map{\sim}\bH_\zar(X,\HH(k[\cA])).
$$
As in \eqref{F(A)c}, given a sequence $\fc = \{c_1,c_2,\dots\}$
of integers with $c_i\ge2$ for all $i$, taking colimits yields
the presheaf of spectra 
$
\bH_\zar(-,\HH(k[\cA]))^\fc.
$
For any $X$ in $\cMpctf$, we have
\begin{equation} \label{E428}
\bH_\zar(X, \HH(k[\cA]))^\fc
\map{\sim}
\bH_{\zar/k}(X_k, \HH(-/k))^\fc,
\end{equation}
where $\bH_{\zar/k}$ denotes fibrant replacement for the Zariski
topology on the category of schemes of finite type over $k$.

\begin{thm}  \label{cdh-descent-0}
If $k$ is a regular ring containing  $\Q$
and $\fc = \{c_1, c_2, \dots \}$ is a sequence of integers with $c_i\ge2$
for all $i$, then the presheaves of spectra $\bH_\zar(-,\HH(k[\cA]))^\fc$
and $\bH_{\zar/k}(-_k,\HH(-/k))^\fc$
satisfy $cdh$-descent on $\cMpctf$.
\end{thm}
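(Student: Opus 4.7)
The plan is to reduce the theorem to the $cdh$-descent result for $\tOmega$ already established in Theorem \ref{MT2p}, via the Comparison Theorem \ref{CT}. In fact nothing in the descent argument below genuinely uses the hypotheses that $k$ is regular or contains $\Q$; those will become relevant only when this proposition is combined with the Chern character to deduce the Dilation theorem later in the section.

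By construction, $\HH(k[\cA])$ is the presheaf of spectra obtained from the assignment $A \mapsto \Ncy(A)\smsh H_k$, and every $X \in \cMpctf$ has pctf stalks. Hence the Comparison Theorem \ref{CT}, applied on affine opens, furnishes a natural $\S$-homotopy equivalence
$$
\Ncy(\cA(U))^\fc \map{\sim} \tNcy(\cA(U)_\sn)^\fc = \tOmega_{\cA(U)}^\fc.
$$
Smashing with $H_k$ preserves these equivalences, so we obtain a local Zariski weak equivalence of presheaves of spectra on $\cMpctf$,
$$
(\Ncy(\cA)\smsh H_k)^\fc \map{\sim} (\tOmega_\cA\smsh H_k)^\fc = (\tOmega^{H_k,0})^\fc.
$$

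The remainder is formal. First I would apply Lemma \ref{colim-descent} to commute the filtered colimit along $\fc$ past the Zariski fibrant replacement, obtaining
$$
\bH_\zar(-,\HH(k[\cA]))^\fc \sim \bH_\zar\bigl(-,(\tOmega^{H_k,0})^\fc\bigr) \sim \bH_\zar(-,\tOmega^{H_k,0})^\fc.
$$
Theorem \ref{MT2p}, applied with $T = H_k$ and $r = 0$, asserts that $\bH_\zar(-,\tOmega^{H_k,0})$ satisfies $cdh$ descent; one more invocation of Lemma \ref{colim-descent} (this time on the $cdh$ side) propagates this descent property through the filtered colimit over $\fc$, which settles the assertion for the first presheaf. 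The second presheaf is then dispensed with via the natural weak equivalence \eqref{E428}, since $cdh$ descent is preserved under object-wise weak equivalence. The technical heart of the argument has been done in Theorem \ref{MT2p}; what remains is simply chaining the Comparison Theorem to it and bookkeeping the interaction of fibrant replacements with filtered colimits.
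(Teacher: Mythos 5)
Your proof is correct and follows essentially the same route as the paper's (quite terse) argument: reduce to $\tOmega^{H_k,0}$ via the Comparison Theorem \ref{CT}, apply Theorem \ref{MT2p} with $T = H_k$ and $r = 0$, and propagate through the $\fc$-colimit using Lemma \ref{colim-descent}. Your explicit tracking of the commutation of filtered colimits with fibrant replacements, and the observation that the regularity and characteristic-zero hypotheses play no role here, usefully fill in details the paper leaves implicit.
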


\begin{proof} By Theorem \ref{CT}, it suffices to prove that the
analogously defined presheaf defined using $\tNcy$ in place of
$\Ncy$ satisfies $cdh$-descent. This holds by Theorem \ref{MT2p},
letting $T$ be the Eilenberg-Mac\,Lane spectrum $H_k$, regarded as a
$\S$-spectrum with trivial action,
so that $\tOmega^{T,0}(\MSpec(A))$ is $|\tNcy(A_\sn)|\smsh H_k$.
\end{proof}

\begin{subrem}
An equivariant spectrum is indexed by finite sub-representations of an
$\S$-universe $U$, while an ordinary spectrum is indexed by finite
dimensional subspaces of $U^{\S}$. Thus in order to regard an ordinary
spectrum $H$ as an equivariant spectrum with trivial action,
one needs to extend the indexing family. This is accomplished by
the left adjoint of the forgetful functor from equivariant spectra
or ordinary spectra. We shall not dwell on this standard construction.
\end{subrem}

Now let $\H_{\cdh/k}$ denote fibrant replacement for the $cdh$ topology
on schemes of finite type over $k$. We are interested in the canonical map
\begin{equation} \label{E425}
\bH_{\zar/k}(X_k, \HH(-/\Q))^\fc
\to \bH_{\cdh/k}(X_k, \HH(-/\Q))^\fc
\end{equation}

\begin{prop}\label{Qdescent}
If $k=\Q$, \eqref{E425} is a weak equivalence for all $X$ in $\cMpctf$.
\end{prop}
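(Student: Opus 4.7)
The plan is to combine Theorem \ref{cdh-descent-0} with the reduction-to-smooth principle in the final clause of Proposition \ref{prop:cdhdescent}, and then conclude with the Hochschild--Kostant--Rosenberg theorem applied on smooth $\Q$-schemes.

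First I would show that both presheaves in \eqref{E425}, viewed as presheaves of spectra on $\cMpctf$, satisfy $cdh$ descent. The left hand side does so by Theorem \ref{cdh-descent-0} applied with $k=\Q$, together with the identification \eqref{E428}. For the right hand side, $\bH_{\cdh/\Q}(-,\HH(-/\Q))$ satisfies $cdh$ descent on the category of $\Q$-schemes of finite type by construction; pulling back along the $\Q$-realization functor $X\mapsto X_\Q$ (which, by \cite{chww-monoid}, sends Zariski covers and abstract blow-ups in $\cMpctf$ to the corresponding covers and blow-ups among $\Q$-schemes) then yields a presheaf on $\cMpctf$ satisfying $cdh$ descent. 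The $\fc$-colimit preserves this property by Lemma \ref{colim-descent}.

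Let $\cF$ denote the homotopy fiber of the map \eqref{E425}. Then $\cF$ is a presheaf of spectra on $\cMpctf$ satisfying $cdh$ descent with $\cF(\emptyset)\sim *$. By the final assertion of Proposition \ref{prop:cdhdescent}, it then suffices to verify that $\cF(Y)\sim *$ for every smooth $Y$ in $\cMpctf$.

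Finally, for smooth $Y\in\cMpctf$ the stalks of $\cA_Y$ are of the form $(\Z^a\times\N^b)_*$, so $Y_\Q$ is a smooth $\Q$-scheme of finite type. By the Hochschild--Kostant--Rosenberg theorem, $\HH(-/\Q)$ on smooth $\Q$-schemes is weakly equivalent to the generalized Eilenberg--Mac\,Lane spectrum of the coherent sheaves $\Omega^n_{-/\Q}$, and the required equivalence of Zariski and $cdh$ hypercohomology for these sheaves on smooth $\Q$-schemes is exactly Danilov's input used in our characteristic-zero argument in \cite{chww-toric}; it persists at each stage of the $\fc$-colimit and, via Lemma \ref{colim-descent}, after taking the colimit. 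The hard step is the first: confirming that realization sends the cd-structure underlying $cdh$ descent on $\cMpctf$ into the $cdh$ cd-structure on $\Q$-schemes, so that cdh descent really is preserved under pullback. This reduces to the proper-and-birational analysis of realizations of abstract blow-ups carried out in \cite{chww-monoid}.
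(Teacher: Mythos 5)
Your proposal follows essentially the same route as the paper: show both sides of \eqref{E425} satisfy $cdh$ descent on $\cMpctf$ (Theorem \ref{cdh-descent-0} plus \eqref{E428} for the source, and the fact that $\Q$-realization sends $cdh$ squares of monoid schemes to $cdh$ squares of $\Q$-schemes for the target), reduce to smooth $Y$, and then use that Zariski and $cdh$ hypercohomology of $\HH(-/\Q)$ agree on smooth $\Q$-schemes of finite type. The one correction is your final citation: that smooth comparison is not ``Danilov's input'' but the $scdh$-descent theorem of \cite[2.9, 2.10, 3.9]{chsw} together with the $cdh$/$scdh$ comparison of \cite{chww-vorst}, which is exactly what the paper's proof invokes; Danilov's theorem enters this paper only through the non-smooth toric analysis of Theorem \ref{MT2p}.
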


\begin{proof}
Let us write $\cF(X)$ for the source of \eqref{E425},
regarding $\cF$ as a presheaf on $\cMpctf$.  Using \eqref{E428},
Theorem \ref{cdh-descent-0}
shows that $\cF$ satisfies $cdh$ descent on $\cMpctf$.
Since $\Q$-realization sends $cdh$ squares in $\cMpctf$ to
$cdh$ squares of schemes of finite type over $\Q$, the target of
\eqref{E425} also satisfies $cdh$-descent as a presheaf on $\cMpctf$.

Since monoid schemes are locally smooth and affine for the $cdh$ topology
by \cite[11.1]{chww-monoid}, we may assume that $X$ is smooth and affine.
In this case, $X_\Q$ is smooth over $\Q$ by \cite[6.4--5]{chww-monoid}.
(In fact, $X$ is a finite product of copies of $\A^1$ and $\A^1-\{0\}$.)
Moreover, $\HH(X_\Q/\Q))^\fc\map{\sim}\cF(X)$ is a weak equivalence.

There is a notion of $scdh$ descent for presheaves on $Sm/\Q$,
and if a presheaf $G$ satisfies $scdh$ descent then $\bH_\cdh(-,G)\simeq\bH_\scdh(-,G)$
(see the argument preceding Theorem 2.4 in \cite{chww-vorst}).
It is proven in \cite[2.9, 2.10, 3.9]{chsw} that
$\bH_{\zar/\Q}(-,\HH(-/\Q))$ satisfies $scdh$ descent on $Sm/\Q$.
Since $X_\Q$ is smooth affine, this implies that the maps
\addtocounter{equation}{-1}
\begin{subequations}
\renewcommand{\theequation}{\theparentequation.\arabic{equation}}
\begin{equation}\label{eq:cdh-scdh}
\HH(X_\Q/\Q)\map{\sim}\bH_{\cdh/\Q}(X_\Q,\HH(-/\Q))
\map{\sim}\bH_{\scdh/\Q}(X_\Q,\HH(-/\Q))
\end{equation}
\end{subequations}
are weak equivalences. Therefore $\HH(X_\Q/\Q)^\fc$
is weakly equivalent to the target of \eqref{E425}, as required.
\end{proof}


\begin{thm}\label{dilation-0}
If $k$ is a regular ring containing  $\Q$
and $\fc = \{c_1, c_2, \dots \}$ is a sequence of integers with $c_i\ge2$
for all $i$, then the canonical map
$$
\cK(X_k)^\fc \to \cKH(X_k)^\fc
$$
is a weak equivalence for all $X \in \cMpctf$.
In particular, if $A$ is a cancellative and
torsionfree monoid with no non-trivial units, then
$K_*(k[A])^\fc\cong K_*(k)$ and
$$
\cK(k[A])^\fc \sim \cK(k).
$$
\end{thm}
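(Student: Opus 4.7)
The plan is to reduce the theorem to a $cdh$-descent statement and then exploit the fact that monoid schemes are locally smooth in the $cdh$ topology. In characteristic zero, the celebrated theorem of Corti\~nas \cite{chsw} identifies the homotopy fiber of $\cK(-) \to \cKH(-)$ on $k$-schemes (with $k\supseteq\Q$) with an ``infinitesimal'' invariant built from negative cyclic and Hochschild homology over $\Q$ --- precisely the sort of theory whose dilated version is controlled by $\bH_{\zar/\Q}(-_{\Q},\HH(-/\Q))^\fc$. Since this latter presheaf satisfies $cdh$ descent on $\cMpctf$ by Proposition \ref{Qdescent}, the same will be true of the presheaf $\cF(X) := \hofi\bigl(\cK(X_k)^\fc \to \cKH(X_k)^\fc\bigr)$, once we observe that taking $(-)^\fc$ commutes with the relevant homotopy fibers and that $\cKH$ itself always satisfies $cdh$ descent on $\cMpctf$ (both over $k$ and, pulled back, over $\cMpctf$).

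Granting that $\cF$ satisfies $cdh$ descent on $\cMpctf$, the main theorem reduces to showing $\cF(Y)\simeq *$ for smooth $Y\in\cMpctf$. By the final assertion of Proposition \ref{prop:cdhdescent}, it suffices to verify the claim for smooth $Y$; but for such $Y$ the scheme $Y_k$ is smooth over $k$ (by \cite[6.4--6.5]{chww-monoid}) and hence regular (as $k$ is regular), so $\cK(Y_k)\simeq \cKH(Y_k)$, and thus $\cF(Y)\simeq *$ trivially --- even before taking the dilated colimit. This proves the first assertion of the theorem.

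For the ``in particular'' statement, take $X=\MSpec(A_*)$, which lies in $\cMpctf$ since $A$ is cancellative and torsionfree. The first part yields $\cK(k[A])^\fc\simeq\cKH(k[A])^\fc$. To conclude that this is $\cK(k)$, I would argue as follows. The dilations $\theta_c$ on $k[A]$ are $k$-algebra endomorphisms fixing the augmentation $k[A]\to k$ (because $A$ has no non-trivial units, the augmentation ideal is the span of $A\setminus\{0,1\}$, which is preserved by $\theta_c$). Hence the structure map $k\to k[A]$ induces a $\theta_c$-equivariant map on $\cKH$, and $\cKH(k[A])^\fc$ admits $\cKH(k)=\cK(k)$ as a natural retract. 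To see that the map $\cKH(k)\to\cKH(k[A])^\fc$ is an equivalence, apply $cdh$ descent for $\cKH$ on $\cMpctf/k$ together with toric resolution of singularities from \cite{chww-monoid}: there is a proper birational $Y\to X$ with $Y$ smooth in $\cMpctf$, and an inductive argument on $\dim X$ using the $cdh$ squares of Proposition \ref{prop:cdhdescent}(3)--(5), reduces the computation of $\cKH(X_k)$ to that of $\cKH(Y'_k)$ for smooth toric $Y'$, where $\cKH(Y'_k)\simeq \cKH(k)$ by homotopy invariance of $\cKH$ applied iteratively to the factors $\A^1_k$ and $(\A^1_k\setminus\{0\})$ making up the smooth toric pieces.

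The principal obstacle is the assembly of step one: verifying that the Corti\~nas fiber-sequence identification of $\hofi(\cK\to\cKH)$ respects the formation of $(-)^\fc$ and is sufficiently functorial on $\cMpctf$ to allow invocation of Theorem \ref{cdh-descent-0}. Once this is in place, the descent-plus-smoothness reduction is automatic via Proposition \ref{prop:cdhdescent}. The final computation of $\cKH(k[A])^\fc$ is comparatively soft, but does rely essentially on the toric resolution produced in \cite{chww-monoid} to reduce to smooth pieces where homotopy invariance applies.
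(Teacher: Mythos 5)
Your architecture for the first assertion --- show that the fiber of $\cK(-_k)^\fc\to\cKH(-_k)^\fc$ satisfies $cdh$ descent on $\cMpctf$, note that it vanishes on smooth $Y$ because $Y_k$ is then regular, and invoke the final sentence of Proposition \ref{prop:cdhdescent} --- transplants the characteristic-$p$ proof of Theorem \ref{MainTheorem} into characteristic $0$; the paper instead cites \cite[6.8--6.9]{chww-toric} to reduce everything to the single statement that \eqref{E425} is an equivalence, and then proves \eqref{E425} by a K\"unneth argument, Popescu's theorem, and Proposition \ref{Qdescent}. Your plan is not unreasonable, but the step you defer as ``the principal obstacle'' is precisely where all the work lies, and it is not supplied. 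Theorem \ref{cdh-descent-0} is about Hochschild homology relative to $k$, while the \cite{chsw} identification of the fiber of $\cK\to\cKH$ involves negative cyclic homology of $X_k$ over $\Q$; bridging the two requires the K\"unneth equivalence relating $HH(-/\Q)$ of $X_k$ to $HH(-/\Q)$ of $X_\Q$ smashed with $HH(k/\Q)$, plus the reduction (via Popescu) to $k$ smooth of finite type over $\Q$, since \cite{chsw} applies only to schemes of finite type over a characteristic-zero field. Moreover, $cdh$ descent for the dilated $HH$-level presheaf does not formally yield $cdh$ descent for the dilated $HN$-level fiber, because the dilated colimit does not commute with the homotopy limit defining $HN$; one needs the $SBI$/weight bookkeeping that is the actual content of \cite[6.8--6.9]{chww-toric}. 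Finally, you cite Proposition \ref{Qdescent} as providing $cdh$ descent, but that proposition asserts something different (that \eqref{E425} is an equivalence for $k=\Q$); the descent statement you want is Theorem \ref{cdh-descent-0}.

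The ``in particular'' part contains a step that fails. You propose to compute $\cKH(k[A])^\fc$ by resolution and induction, ending with the claim that $\cKH(Y'_k)\simeq\cKH(k)$ for smooth toric $Y'$ ``by homotopy invariance applied iteratively to the factors $\A^1_k$ and $\A^1_k\setminus\{0\}$.'' Homotopy invariance handles the $\A^1$-factors but not the $\A^1\setminus\{0\}$-factors: by the fundamental theorem, $\cKH(k[t,t^{-1}])\simeq\cKH(k)\vee\Sigma\cKH(k)$, and the extra summand does not die in the dilated colimit, since $\theta_c$ multiplies the Bass class by $c$, so the colimit is a localization rather than zero. Thus the smooth toric strata with torus factors contribute nontrivially and your induction does not close. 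The paper's Corollary \ref{Gconjecture} avoids this entirely: since $A$ has no non-trivial units, the Separation Theorem gives an $\N$-grading on $k[A]$ with degree-zero part $k$, whence $KH_*(k[A])\cong K_*(k)$ via the canonical map and $\theta_c$ acts trivially on $KH_*$; combined with the first assertion this yields $\cK(k[A])^\fc\sim\cK(k)$.
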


When $k$ is a field with $\chr(k) = 0$, this theorem was proved
in \cite{chww-toric}. The isomorphism $K_*(k[A])^\fc\cong K_*(k)$
when $k$ is regular is due to Gubeladze \cite{Gu08}.

\begin{proof}
The proofs of Corollary 6.8 and Theorem 6.9 of our
previous paper \cite{chww-toric} apply verbatim to show that
$\cK(X_k)^\fc \to \cKH(X_k)^\fc$ is a weak equivalence
if the canonical map \eqref{E425} is a weak equivalence of spectra.
The proof that this implies that $\cK(k[A])^\fc \sim \cK(k)$
is given in \cite[6.10]{chww-toric}; a shorter proof is given in
Corollary \ref{Gconjecture} below.

The source of \eqref{E425} may be understood using the following device.
By the K\"unneth formula, we have a natural weak equivalence
$$
\HH(R/\Q) \smsh \HH(k/\Q) \map{\sim}
\HH(R \otimes_\Q k/\Q)
$$
for any $\Q$-algebra $R$. Applying this locally,
we see that the canonical map
$$
\bH_{\zar/\Q}(Y,\HH(-/\Q))^\fc \smsh \HH(k/\Q)
\map{\sim}
\bH_{\zar/k}(Y_k, \HH(-/\Q))^\fc
$$
is a weak equivalence for every
noetherian scheme $Y$ over $\Q$, including $Y=X_\Q$.

Since $k$ is a filtered union of smooth $\Q$-algebras of finite type
(by Popescu's theorem), and $K$-theory commutes with filtered colimits,
we may also assume that $k$ is finitely generated smooth over $\Q$.
Since the basechange $-\otimes_{\Spec\Q}\Spec k$
sends $cdh$ squares to $cdh$ squares, an arrow $\mu$ exists
making the following diagram commutative up to homotopy.
\[ \xymatrix{
\bH_{\zar/\Q}(X_\Q, \HH(-/\Q))^\fc\smsh \HH(k/\Q)
 \ar[r]^{\phantom{XXXX}\simeq}   \ar[d]^{\simeq}_{\ref{Qdescent}}  &
\bH_{\zar/k}(X_k, \HH(-/\Q))^\fc \ar[d]^{\eqref{E425}} \\
\bH_{\cdh/\Q}(X_\Q, \HH(-/\Q))^\fc \smsh \HH(k/\Q)
\ar[r]^{\phantom{XXXX}\mu} &
\bH_{\cdh/k}(X_k, \HH(-/\Q))^\fc
}\]
The left arrow is a weak equivalence by Proposition \ref{Qdescent},
and the top arrow is a weak equivalence by the above remarks.
Thus it suffices to show that the bottom arrow $\mu$ is a weak equivalence.


Since $k$-realization sends
$cdh$ squares in $\cMpctf$ to $cdh$ squares of schemes of finite type
over $k$, both the source and target of $\mu$ satisfy $cdh$-descent
on $\cMpctf$. Since every $X$ in $\cMpctf$ is locally isomorphic in
the $cdh$ topology to a smooth affine monoid scheme,
we may assume $X$ is such a scheme.
Because $k$ is smooth over $\Q$, the map
$$
\HH(X_k/\Q) \to
\bH_{\cdh/k}(X_k, \HH(-/\Q))
$$
is a weak equivalence. We saw in \eqref{eq:cdh-scdh} that
$\HH(X_\Q/\Q)\to\bH_{\cdh/\Q}(X_\Q,\HH(-/\Q))$
is a weak equivalence, so $\mu$ is weak equivalent to the map
\[
\HH(X_\Q/\Q)^\fc\smsh \HH(k/\Q) \map{} \HH(X_k/\Q)^\fc.
\]
Since $X_\Q$ is affine, this is a weak equivalence by the K\"unneth formula.
\end{proof}

\section{The Dilation theorem}\label{sec:maintheorem}

We recall some basic facts about pro-objects from \cite{Artin-Mazur}.
A {\em pro-abelian group} is a sequence of homomorphisms
  of abelian groups indexed by the positive integers,
$$
\cdots \to A^3 \to A^2 \to  A^1,
$$
or, in other words, it is a contravariant functor from $\N$ to the category of
abelian groups, where $\N$ is the ordered set of positive integers
viewed as a category in the standard way. We typically write such a
pro-abelian group as $\{A^n\}$.
The category $\ProAb$ has as objects all pro-abelian groups and
morphisms defined by
$$
\Hom_{\ProAb}(\{A^n\}, \{B^n\}) =
\varprojlim_{m\in\N} \varinjlim_{n\in\N} \Hom_\Ab(A^n, B^m).
$$
The category $\ProAb$ is an abelian category.

A {\em strict morphism} of pro-abelian groups will refer to a natural
transformation of functors from $\N$ to abelian groups. The collection
of pro-abelian groups with arrows defined by strict morphisms
is the abelian category $\Ab^{\N}$ of contravariant functors
from $\N$ to abelian groups. The evident functor from $\Ab^{\N}$
to $\ProAb$ is exact and preserves all finite limits and colimits
\cite[A.4.1]{Artin-Mazur}.
In particular, if we consider the morphism in $\ProAb$ associated to a
strict morphism, its kernel and cokernel are also given degree-wise.

\begin{lem}\label{lem:prozero}
A pro-abelian group $\{A^n\}$ is isomorphic to zero in $\ProAb$
if and only if for all $m$ there is an $n\ge m$
such that $A^n \to A^m$ is the zero map.
\end{lem}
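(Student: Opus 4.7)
The plan is to unwind the definition of $\Hom_{\ProAb}$ applied to the endomorphisms of $\{A^n\}$, since $\{A^n\} \cong 0$ in $\ProAb$ is equivalent to the identity morphism and the zero morphism agreeing in $\Hom_{\ProAb}(\{A^n\},\{A^n\})$. By definition,
\[
\Hom_{\ProAb}(\{A^n\},\{A^n\}) = \varprojlim_{m} \varinjlim_{n} \Hom_{\Ab}(A^n, A^m),
\]
and an element of the inverse limit is determined by its components in each $\varinjlim_n \Hom_\Ab(A^n, A^m)$. Thus the identity equals zero in the Hom set if and only if, for every $m$, the class of $\mathrm{id}_{A^m}$ equals the class of $0$ in $\varinjlim_n \Hom_\Ab(A^n, A^m)$.

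For the forward direction, assume $\{A^n\} \cong 0$. Fix $m$. Since the classes of $\mathrm{id}_{A^m}$ and $0$ agree in the filtered colimit $\varinjlim_n \Hom_\Ab(A^n, A^m)$, there must exist $n \geq m$ so that the two compositions $A^n \to A^m \xrightarrow{\mathrm{id}} A^m$ and $A^n \to A^m \xrightarrow{0} A^m$ coincide; since the first is just the transition map $A^n \to A^m$ and the second is zero, this transition map is zero, as desired.

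For the converse, assume that for every $m$ there exists $n \geq m$ with the transition map $A^n \to A^m$ equal to zero. Then in each $\varinjlim_n \Hom_\Ab(A^n, A^m)$, the class of $\mathrm{id}_{A^m}$ equals the class of $0$, and these equalities are automatically compatible across different $m$, so the identity morphism of $\{A^n\}$ equals the zero morphism in $\Hom_{\ProAb}$, whence $\{A^n\} \cong 0$. No step is genuinely hard here; the only mild subtlety is keeping straight which composition is being represented by which class when unpacking the filtered colimit.
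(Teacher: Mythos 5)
Your proof is correct and follows essentially the same route as the paper: identify $\{A^n\}\cong 0$ with the equality of the identity and zero morphisms in $\Hom_{\ProAb}(\{A^n\},\{A^n\})=\varprojlim_m\varinjlim_n\Hom_\Ab(A^n,A^m)$, and then unwind, for each $m$, when the class of $\mathrm{id}_{A^m}$ vanishes in the filtered colimit. The paper does exactly this, phrased via the elements $g_m$ representing the identity.
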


\begin{proof}
For each $m$, let $g_m \in \varinjlim_n\Hom_\Ab(A^n, A^m)$ denote
the image of $A^m \map{\text{id}} A^m$ under the canonical map
$\Hom_\Ab(A^m,A^m) \to \varinjlim_n \Hom_\Ab(A^n,A^m)$.
Then the identity map of $\{A^n\}$ is represented by the element
$(g_m)_{m \in \N}$ of the inverse limit of the
$\{ \varinjlim_n \Hom_\Ab(A^n,A^m) \}$.
Clearly, $\{A^n\}\cong0$ if and only if the identity map and the zero map
coincide in $\Hom_{\ProAb}(\{A^n\}, \{A^n\})$. This is equivalent to
the condition that all the $g_m$ are zero.
On the other hand, $g_m = 0$ in $\varinjlim_n \Hom_\Ab(A^n, A^m)$
if and only if there exists an $n \geq m$ so that $A^n \to A^m$ is the
zero map.
\end{proof}


\begin{lem}\label{lem:constantpro}
If a strict map $\{f^n:A^n\to B^n\}$ is a monomorphism in $\ProAb$,
and $\{A^n\}$ is a constant pro-abelian group, then $f^n$ is an
injection for all $n \gg 0$.
\end{lem}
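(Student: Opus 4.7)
The plan is to reduce the statement to Lemma \ref{lem:prozero} by identifying the kernel of $\{f^n\}$ in $\ProAb$ with its strict (degreewise) kernel. Recall that the inclusion $\Ab^{\N}\to\ProAb$ is exact and commutes with finite limits, so the kernel of the strict morphism $\{f^n\}$ in $\ProAb$ is represented by the strict pro-abelian group $\{\ker(f^n)\}$ with transition maps induced degreewise. The map $\{f^n\}$ is a monomorphism in $\ProAb$ if and only if $\{\ker(f^n)\}\cong 0$ in $\ProAb$.

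Next I would unwind what it means for $\{A^n\}$ to be a constant pro-abelian group: we may assume $A^n=A$ for all $n$ with all transition maps equal to the identity of $A$. Strictness of $\{f^n\}$ then says that for all $n\ge m$, the diagram
\[
\xymatrix{
A \ar[r]^{f^n} \ar[d]_{\mathrm{id}} & B^n \ar[d] \\
A \ar[r]^{f^m} & B^m
}
\]
commutes, so $f^m = (B^n\to B^m)\circ f^n$. In particular $\ker(f^n)\subseteq\ker(f^m)$ whenever $n\ge m$, and the transition map in the pro-system $\{\ker(f^n)\}$ is simply this inclusion of subgroups of $A$.

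Now I apply Lemma \ref{lem:prozero}: since $\{\ker(f^n)\}\cong 0$, for each $m$ there is some $n\ge m$ such that the transition $\ker(f^n)\to\ker(f^m)$ is the zero map. But this transition is an inclusion of subgroups, and an inclusion of groups is the zero map only when the source is trivial. Hence $\ker(f^n)=0$ for some $n\ge m$. Because the kernels are nested decreasingly in $n$, it follows that $\ker(f^{n'})=0$ for all $n'\ge n$, i.e.\ $f^{n'}$ is injective for all sufficiently large $n'$.

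There is no real obstacle here beyond correctly identifying kernels in $\ProAb$ with strict kernels (which is a standard fact from \cite{Artin-Mazur} and is already implicit in the discussion preceding the lemma) and observing the monotonicity of $\{\ker(f^n)\}$ forced by the constancy of $\{A^n\}$; everything else is a direct application of Lemma \ref{lem:prozero}.
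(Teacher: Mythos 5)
Your proposal is correct and follows essentially the same route as the paper: identify the kernel of the strict map with the degreewise kernel $\{\ker(f^n)\}$, note it is zero in $\ProAb$ since $\{f^n\}$ is a monomorphism, apply Lemma \ref{lem:prozero}, and use that constancy of $\{A^n\}$ forces the transition maps between the kernels to be injections (inclusions), so a zero transition map means the source kernel vanishes. The only cosmetic difference is that you spell out the nesting $\ker(f^n)\subseteq\ker(f^m)$ explicitly, which the paper leaves implicit in the phrase that each $C^n\to C^m$ is injective.
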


\begin{proof}
Let $C^n = \ker(f^n)$. As noted above,  $\{C^n\}$ is the kernel
of $\{f^n\}$ in $\ProAb$ and hence it is the zero object of this
category. By Lemma \ref{lem:prozero}, this means
that for all $m$, there exists an $n \geq m$ such that $C^n
\to C^m$ is the zero map. But $\{A^n\}$ is constant and $C^n \subset
A^n$ for all $n$  and hence each map $C^n \to C^m$ is injective. It
follows that $C^n = 0$ for all $n \gg 0.$
\end{proof}

A {\em pro-spectrum} is contravariant functor from $\N$
to the category of spectra --- i.e., it is  a sequence of maps of spectra
of the form
$$
\cdots \to E^2 \to E^1.
$$
A strict map of pro-spectra is a natural transformation of such functors.
We say that a strict map of pro-spectra $\{E^n\} \to \{F^n\}$  is a
{\em weak equivalence} if for each $q\in\Z$ the induced (strict) map of
pro-abelian groups $\{\pi_q E^n\} \to \{\pi_q F^n\}$
is an isomorphism in the category $\ProAb$.
A commutative square of pro-spectra and strict maps,
$$
\xymatrix{
\{E^n\} \ar[r] \ar[d] & \{F^n\} \ar[d] \\
\{G^n\} \ar[r] & \{H^n\},
}
$$
is said to be {\em homotopy cartesian} if the induced map of pro-spectra
$$
\{E^n\} \to \{ \holim (G^n \to H^n \leftarrow F^n) \}
$$
is a weak equivalence.

We arrive at the main theorem of this paper:

\begin{thm} \label{MainTheorem}
Let $\fc = \{c_1, c_2, \cdots\}$ be a sequence of integers with
$c_i\ge2$ for all $i\ge1$, and let $k$ be a regular ring containing
a field. Then for any $X$ in $\cMpctf$, the map
$$
\cK(X_k)^\fc \to \cKH(X_k)^\fc
$$
is an equivalence.
\end{thm}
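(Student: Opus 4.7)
By Theorem~\ref{dilation-0} we may assume $k$ contains $\F_p$ for some prime $p$. Since $\cK$, $\cKH$, and $(-)^\fc$ all commute with filtered colimits of coefficient rings, Popescu's theorem further reduces to the case where $k$ is smooth of finite type over $\F_p$. Let $\cF$ be the homotopy fibre of $\cK(-_k)^\fc \to \cKH(-_k)^\fc$, viewed as a presheaf of spectra on $\cMpctf$; the goal is $\cF(X) \simeq \ast$ for all $X$. If $Y \in \cMpctf$ is smooth, then $Y_k$ is smooth over $k$ and hence regular, so $\cK(Y_k) \map{\sim} \cKH(Y_k)$ already before dilation, and $\cF$ vanishes on smooth objects. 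By the final assertion of Proposition~\ref{prop:cdhdescent}, it therefore suffices to prove that $\cF$ satisfies $cdh$ descent on $\cMpctf$.

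Since $\cKH$ satisfies $cdh$ descent on $k$-schemes of finite type (Haesemeyer), and $(-)_k$ carries $cdh$ squares of $\cMpctf$ to $cdh$ squares of $k$-schemes by \cite[Sec.\,5]{chww-monoid}, Lemma~\ref{colim-descent} gives $cdh$ descent for $\cKH(-_k)^\fc$. The problem reduces to proving $cdh$ descent for $\cK(-_k)^\fc$, which is the crux and is where Corollary~\ref{Cor624} enters via the cyclotomic trace. By the Dundas--Goodwillie--McCarthy theorem, in the pro-spectrum strengthening due to Geisser--Hesselholt, for any surjection $R \onto R/I$ of $\F_p$-algebras with $I$ nilpotent the square comparing $\cK(R), \cK(R/I)$ with the pro-spectra $\{TC^n(R;p)\}, \{TC^n(R/I;p)\}$ is homotopy cartesian after profinite completion. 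Since reductions are $cdh$ isomorphisms (the blow-up square with $X_\red \to X$ and $\emptyset \to X$ lies in the $cdh$ structure), this pro-identification combined with the $cdh$ descent of $\{TC^n(-_k;p)^\fc\}$ supplied by Corollary~\ref{Cor624} bootstraps to $cdh$ descent for the $p$-completed pro-spectrum $\cK(-_k;\Z_p)^\fc$. The prime-to-$p$ part of $\cK(-_k)^\fc$ is controlled by rational Hochschild-type invariants, whose $cdh$ descent follows by adapting the Hochschild-homology argument of Theorem~\ref{dilation-0} (e.g.\ Theorem~\ref{cdh-descent-0}). Gluing the $p$-completed and prime-to-$p$ parts yields integral $cdh$ descent for $\cK(-_k)^\fc$, hence for $\cF$, which completes the proof.

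The main obstacle is translating the pro-spectrum descent of $\{TC^n\}$ into genuine descent for $\cK$: the Dundas--Goodwillie--McCarthy identification holds only at the pro-level, so one must argue systematically with pro-objects throughout and never pass naively to $TC := \holim_n TC^n$, where the descent property is not preserved. This is precisely what the $\ProAb$-calculus developed in Lemmas~\ref{lem:prozero}--\ref{lem:constantpro} at the start of this section is designed to handle, and it constitutes the technical heart of the proof. A secondary subtlety is bookkeeping between the $p$-completed and rational parts of $\cK$, since for an $\F_p$-algebra neither part is automatically trivial and both must be separately shown to satisfy descent before being reassembled.
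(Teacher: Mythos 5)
Your overall skeleton matches the paper's: reduce to characteristic $p$, form the fibre $\cF$ of $\cK(-_k)^\fc \to \cKH(-_k)^\fc$, note $\cF$ vanishes on smooth objects, and conclude via the final assertion of Proposition \ref{prop:cdhdescent} once $cdh$ descent for $\cF$ is known, with Corollary \ref{Cor624} as the essential input. But the step you call the crux --- transferring $cdh$ descent from $\{TC^n(-_k;p)^\fc\}$ to $\cK(-_k)^\fc$ --- is not correctly carried out, and this is where the real content lies. The Dundas--Goodwillie--McCarthy/pro-McCarthy theorem you invoke compares $K$ and pro-$TC$ only for nilpotent extensions $R \onto R/I$; that says nothing about abstract blow-up squares, which are the squares one must handle for $cdh$ descent and which involve proper birational maps, conductor squares, and closed covers rather than infinitesimal thickenings. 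The paper's input at this point is \cite[Proposition 14.7]{chww-monoid}: for every Zariski or abstract blow-up square $\Box$ in $\cMpctf$ there is an \emph{integral} isomorphism of pro-abelian groups $\{\pi_q\cK(\Box)\} \cong \{\pi_q TC^n(\Box;p)\}$, a nontrivial theorem of the companion paper (resting on the bi-relative Geisser--Hesselholt comparison and the geometry of monoid-scheme resolutions), not a formal consequence of nil-invariance plus ``reductions are $cdh$-isomorphisms.'' Given that input, the mechanism is the one you only gesture at: since $\{\pi_q\cK(\Box)\}$ is a \emph{constant} pro-group, Lemma \ref{lem:constantpro} yields that $\pi_q\cK(\Box)\to\pi_q TC^n(\Box;p)$ is injective for all $n\ge n(q)$ at a fixed finite level; injectivity survives the filtered colimit over dilations, and then Corollary \ref{Cor624} kills $\pi_q TC^n(\Box;p)^\fc$, hence $\pi_q\cK(\Box)^\fc=0$. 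Without a blow-up-square comparison of this kind your ``bootstrap'' has no starting point.

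Two further points. First, your splitting of $\cK$ into a $p$-completed and a prime-to-$p$ part is both unnecessary and unsupported: the comparison in \cite[14.7]{chww-monoid} is integral (the relevant relative and bi-relative groups in characteristic $p$ are bounded $p$-torsion), so the paper never $p$-completes; and your proposed treatment of the prime-to-$p$ part by ``adapting'' Theorem \ref{cdh-descent-0} cannot work, since that theorem requires $\Q\subseteq k$ and the prime-to-$p$ part is in any case not captured by rational Hochschild-type invariants. Second, your Popescu reduction to $k$ smooth of finite type over $\F_p$ loses the hypothesis the paper actually arranges by a prime-to-$p$ transfer argument, namely that $k$ contain an \emph{infinite} field, which is needed to apply \cite[14.7]{chww-monoid}; also note that $cdh$ descent for $\cKH$ in characteristic $p$ is taken from \cite[14.5]{chww-monoid} (a statement about $\cMpctf$), not from Haesemeyer's characteristic-zero theorem for $k$-schemes.
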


\begin{proof}
If $\chr(k) = 0$, this was proved in Theorem \ref{dilation-0}.
Assume $\chr(k)=p>0$. The usual transfer argument,
involving prime-to-$p$ field extensions,
allows us to assume that $k$ contains an infinite field.

If $\cF$ is a functor defined on the category of $k$-schemes of
finite type, we will interpret $\cF$ in this proof as a functor on
$\cMpctf$ by precomposing with the $k$-realization functor.
If $\Box$ is any square in $\cMpctf$, write $\cF(\Box)$ for the
iterated homotopy fiber of the commutative square
of spectra obtained by applying $\cF$ to the diagram $\Box$.

With this notation, we claim $\cK(-)^\fc$ satisfies $cdh$ descent
on $\cMpctf$. To show this, we need to prove that
$\pi_q \cK(\Box)^\fc = 0$ for all $q \in \Z$, where
$\Box$ is either a Zariski square or an abstract blow-up square.

Fix an integer $q$. By \cite[Proposition 14.7]{chww-monoid},
we have an isomorphism in $\ProAb$:
$$
\{\pi_q \cK(\Box)\} \to \{\pi_q TC^n(\Box;p)\}.
$$
Lemma \ref{lem:constantpro} shows that
$\pi_q \cK(\Box) \to \pi_q TC^n(\Box;p)$ is injective for all
$n\ge n(q)$, where $n(q)$ depends on $q$.
Thus $\pi_q \cK(\Box)^\fc \to \pi_q TC^n(\Box;p)^\fc$
is also injective for all $n\ge n(q)$.
But for any $n$, $X\mapsto TC^n(X_k;p)^\fc$ satisfies $cdh$ descent
by Corollary \ref{Cor624}, and hence $\pi_q TC^n(\Box; p)^\fc = 0$.
Therefore $\pi_q \cK(\Box)^\fc = 0$, i.e.,
$\cK(-)^\fc$ satisfies $cdh$ descent.

By \cite[14.5]{chww-monoid}, $\cKH$ satisfies $cdh$ descent on $\cMpctf$
and hence so does $\cKH^\fc$, by Lemma \ref{colim-descent}.
It follows that the homotopy fiber $\cG$ of
$\cK(-)^\fc \to \cKH(-)^\fc$ also satisfies $cdh$ descent on
$\cMpctf$. Suppose that $k$ contains a field $F$.
For every smooth $Y$ in $\cMpctf$, $Y_F$ is smooth over $F$ and hence
$Y_k$ is smooth over $k$
(see \cite[6.5]{chww-monoid});
this implies that $\cK(Y_k)\to\cKH(Y_k)$ is a weak equivalence,
and hence that $\cG(Y)\sim *$.
By Proposition \ref{prop:cdhdescent},
$\cG(X) \sim *$ and hence $\cK(X_k)^\fc \sim \cKH(X_k)^\fc$
for all $X \in \cMpctf$.
\end{proof}

The special case $X=\MSpec(A)$ affirms
Gubeladze's dilation conjecture \cite[1.1]{Gu05} for $k[A]$:

\begin{cor}\label{Gconjecture}
Let $\fc = \{c_1, c_2, \cdots\}$ be a sequence of integers $\ge2$
and $k$ a regular ring containing a field. If $A$ is any
cancellative, torsionfree monoid with no non-trivial units, then
$$
\cK(k[A])^\fc \sim \cK(k).
$$
\end{cor}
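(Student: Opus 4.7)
The approach is to reduce to finitely generated $A$, apply Theorem \ref{MainTheorem} with $X = \MSpec A$, and then identify $\cKH(k[A])^\fc$ with $\cK(k)$ by a secondary cdh-descent argument.

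For the reduction step, both $\cK$ and $\cKH$ commute with filtered colimits of rings, and $A$ is the filtered union of its finitely generated sub-monoids, each of which is again cancellative, torsion-free, and without non-trivial units (hence pctf). Thus I may assume $A$ is finitely generated, so that $X := \MSpec A$ lies in $\cMpctf$, and Theorem \ref{MainTheorem} gives $\cK(k[A])^\fc \sim \cKH(k[A])^\fc$. Since $k$ is regular, $\cKH(k) = \cK(k)$, so the remaining goal is to show $\cKH(k[A])^\fc \sim \cKH(k)$.

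Because $A$ has no non-trivial units, the map $\epsilon \colon A \to \{0,1\}_*$ sending every non-identity element to $0$ is a monoid retraction of the unit inclusion, compatible with each dilation $\theta_c$ (one checks $\epsilon \circ \theta_c = \epsilon$ using that $a \ne 1$ non-unit forces $a^c \ne 1$). Applying $\cKH(k[-])^\fc$ yields a split inclusion $\cKH(k) \hookrightarrow \cKH(k[A])^\fc$, so the task reduces to the vanishing of the complementary reduced summand. For this I would consider the presheaf of spectra on $\cMpctf$ defined by
\[
\cG(Z) := \hofi\!\bigl(\cKH(k) \to \cKH(Z_k)^\fc\bigr),
\]
with $\cKH(k)$ treated as a constant presheaf; both the target (by \cite[14.5]{chww-monoid} combined with Lemma \ref{colim-descent}) and the constant presheaf satisfy cdh descent on $\cMpctf$, so $\cG$ does as well.

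The main obstacle is to conclude $\cG(X) \sim *$ for our specific $X$. A direct appeal to the last assertion of Proposition \ref{prop:cdhdescent} fails: for a smooth $Y \in \cMpctf$ whose stalks contain a free-abelian factor, $Y_k$ contains a Laurent-polynomial factor, and the fundamental theorem of $K$-theory gives $\cK(k[t^{\pm 1}]) \sim \cK(k) \oplus \cK(k)[-1]$ on which $t \mapsto t^c$ acts as multiplication by $c$ on the shifted summand, so $\cK(Y_k)^\fc$ in general retains non-trivial localized contributions at $\fc$ and is strictly larger than $\cK(k)$. The workaround I propose is induction on $\dim A$: resolve $X$ by a smooth toric monoid scheme whose exceptional locus has strictly smaller dimension and is again unit-free at closed points, apply cdh descent to express $\cKH(X_k)^\fc$ as a homotopy pullback involving the smooth model and the exceptional pieces, invoke the induction hypothesis on the exceptional side, and carry out a combinatorial analysis of the dilation action on the smooth toric model to show that the apparent Laurent summands cancel against the gluing data along the exceptional divisor.
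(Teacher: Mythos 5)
Your reduction to finitely generated $A$ and your appeal to Theorem \ref{MainTheorem} are exactly right, and you correctly diagnose why the final assertion of Proposition \ref{prop:cdhdescent} cannot be applied to the fiber over the constant presheaf $\cKH(k)$: smooth monoid schemes with torus factors contribute Laurent-type summands to $\cKH(Y_k)^\fc$ that do not vanish after inverting $\fc$. The problem is that your proposed repair never becomes a proof. The induction on dimension ends with the claim that ``the apparent Laurent summands cancel against the gluing data along the exceptional divisor,'' with no mechanism supplied; that cancellation is precisely where all the work would have to happen, so the identification $\cKH(k[A])^\fc\sim\cK(k)$ --- and hence the corollary --- is left unestablished.

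The missing idea is much more elementary and avoids any further descent. For $A$ finitely generated, cancellative, torsionfree and with no non-trivial units, the Separation Theorem (Fulton, p.~13) produces a group homomorphism $p\colon A^+\to\Z$ which is strictly positive on the nonzero, non-identity elements of $A$; this makes $k[A]$ an $\N$-graded ring with $k[A]_0=k$. Homotopy invariance of $KH$ for positively graded rings then gives $KH_*(k[A])\cong KH_*(k)=K_*(k)$ via the canonical map, and since the dilations $\theta_c$ are compatible with this identification and fix $k$, they act trivially on $KH_*(k[A])$; hence $\cKH(k[A])^\fc\sim\cKH(k[A])\sim\cK(k)$ with no colimit subtleties, and Theorem \ref{MainTheorem} (with $X=\MSpec(A)$) finishes the argument. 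Note that the Laurent-summand phenomenon you worry about never arises for $X=\MSpec(A)$ itself, because the hypothesis that $A$ has no non-trivial units rules out torus factors there; your split inclusion $\cKH(k)\hookrightarrow\cKH(k[A])^\fc$ via the augmentation is fine but is only half the statement, and the grading argument is what supplies the other half.
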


\begin{proof}
Since $A$ is the direct limit of its finitely generated submonoids,
we may assume that $A$ is finitely generated, cancellative and torsionfree,
with no non-trivial units. The Separation Theorem \cite[p.\,13]{Fulton}
gives a group homomorphism $p:A^+\to\Z$ with $p(A\setminus\{0\})>0$ and
$p(-A)\le0$; using $p$, the ring $k[A]$ admits a grading by the
natural numbers with $k[A]_0 = k$. It follows that
$KH_*(k[A]) \cong K_*(k)$ via the canonical map and that the
action of $\theta_c$ on $KH_*(k[A])$ is trivial.
Taking $X=\MSpec(A)$, the assertion follows from Theorem \ref{MainTheorem}.
\end{proof}

\bibliographystyle{plain}

\end{document}